\documentclass[a4paper,11pt]{amsart}

\usepackage{amsmath}
\usepackage{amssymb}
\usepackage{amsthm}
\usepackage{verbatim}
\usepackage[all]{xy}
\usepackage{enumerate}
\usepackage{setspace}
\usepackage{color}

\usepackage[all]{xy}

\usepackage{color}

\newtheorem{thm}{Theorem}[section]
\newtheorem{prop}[thm]{Proposition}
\newtheorem{cor}[thm]{Corollary}
\newtheorem{lem}[thm]{Lemma}

\theoremstyle{definition}

\newtheorem{dfn}[thm]{Definition}

\newtheorem{rmk}[thm]{Remark}

\numberwithin{equation}{section}
\textwidth 6.0truein
\textheight 8.5truein
\oddsidemargin 0pt
\evensidemargin 0pt
\pagestyle{myheadings}
\setstretch{1.2}
\overfullrule=5pt

\newcommand{\cM}{\mathcal{M}}
\newcommand{\cH}{\mathcal{H}}

\newcommand{\cB}{\mathcal{B}}

\newcommand{\nphi}{\mathfrak{n}_{\varphi}}
\newcommand{\npsi}{\mathfrak{n}_{\psi}}
\newcommand{\mphi}{\mathfrak{m}_{\varphi}}

\newcommand{\cN}{\mathcal{N}}


\newcommand{\bA}{\mathsf{A}}

\newcommand{\cMext}{\mathcal{M}^+_{{\rm ext}}}

\newcommand{\cNext}{\mathcal{N}^+_{{\rm ext}}}
\newcommand{\Dom}{\textrm{Dom}}

\newcommand{\Tpsiop}{  \mathcal{T}_{ \psi^{{\rm op}} } }
\newcommand{\npsiop}{  \mathfrak{n}_{ \psi^{{\rm op}}  }}

\newcommand{\psiop}{   \psi^{{\rm op}}   }

\title[Weight theory for ultraproducts]{Weight theory for ultraproducts}

\date{\noindent \today.
 }




\author{Martijn Caspers}
 \address{M. Caspers, Universiteit Utrecht,
Budapestlaan 6, 3584 CD Utrecht,
The Netherlands}
 \email{m.p.t.caspers@uu.nl}

\begin{document}


\begin{abstract}
For a family of von Neumann algebras $\cM_j$ equipped with normal weights $\varphi_j$ we define the ultraproduct weight $(\varphi_j)_\omega$ on the Groh--Raynaud ultrapower $\prod_{j, \omega} \cM_j$. We prove results about Tomita-Takesaki modular theory and consider ultraproducts of spatial derivatives. This extends results by Ando--Haagerup and Raynaud for the state case. We give some applications to noncommutative $L^p$-spaces  and indicate how ultraproducts of weights appear naturally in transference results for Schur and Fourier multipliers. Using ideas from complex interpolation with respect to ultraproduct weights, we give a new proof of a theorem by Raynaud which shows that $\prod_{j, \omega} L^p(\cM_j) \simeq L^p(\prod_{j, \omega} \cM_j )$. We complement the paper by showing that spatial derivatives take a  natural form in terms of noncommutative $L^p$-spaces.
\end{abstract}

\maketitle

\section{Introduction}

Ultraproducts form a very useful tool in the study of Banach spaces. This was first observed in \cite{DucKr} after which many applications were found. We mention a couple here. Firstly in von Neumann algebra theory ultraproducts play a very prominent role. This was for example the case in the classification of hyperfinite factors \cite{ConnesClass}, \cite{HaagerupClass}. Also Connes' famous -- still unresolved -- embedding problem (equivalently the QWEP conjecture) is formulated in terms of ultraproducts. Recall that it states that every separable II$_1$-factor embeds in an (Ocneanu) ultraproduct of the hyperfinite II$_1$-factor. There are (further) strong connections of ultraproducts with free probabiltiy theory, see e.g. \cite{JungeInventiones}, \cite{PopaJFA} or \cite{HoudayerIsono} or model theory/descriptive set theory, see e.g. \cite{FSH}, \cite{Boutonnet}.

A second application can be found in noncommutative harmonic analysis, i.e. harmonic analysis for nonabelian groups, harmonic analysis on group von Neumann algebras or vector valued semi-commutative analysis. It is often very useful to asymptotically intertwine Fourier multipliers to obtain sharp bounds for them or to discretize them. This asymptotic intertwining technique or ``transference method'' was applied in many places, see for instance \cite{NeuwirthRicard}, \cite{CaspersSalle}, \cite{CPPR}, \cite{Ricard}, \cite{Gonzalez}. We may view such an asymptotic intertwining property as a (non-asymptotic) intertwining property between ultraproduct maps (we omit further details here but refer to these references and further discussions below).

 Whereas the definition of ultraproducts of Banach spaces is rather elementary it is often quite non-obvious what properties the ultraproduct Banach space has. For example, if one takes an ultraproduct of $L^p$-spaces, it is a non-trivial result that the resulting space is again an $L^p$-space.  The latter fact is due to Raynaud \cite{Raynaud}. In \cite{Raynaud} Raynaud also proved that the ultraproduct of the Tomita--Takesaki modular automorphism group of states localizes to the modular automorphism group of the ultraproduct state, i.e. the bounded case of Theorem A below. In addition Raynaud proves a similar result for the Connes cocycle derivative.

More recently, in \cite{AndoHaagerup} Ando and Haagerup gave a   complete outline of different ultraproducts of von Neumann algebras and their relations. In particular they considered the Groh-Raynaud ultraproduct and the Ocneanu ultraproduct and showed that the Ocneanu ultraproduct is isomorphic to  a corner algebra of the Groh-Raynaud ultraproduct. Ando and Haagerup proved many things about ultraproducts. For example they extended Raynaud's results on Tomita--Takesaki theory to ultraproducts of possibly  different von Neumann algebras with a new proof. They were also able to answer questions on asymptotic central sequences, type and factoriality of ultraproducts.

\vspace{0.3cm}

 In this paper we shall only be dealing with the Groh-Raynaud ultraproduct whose name originates from \cite{Groh} and \cite{Raynaud}. Recall that it is defined as follows. Let $\omega$ be a nonprincipal ultrafilter on $\mathbb{N}$ and for every $j \in \mathbb{N}$ let $\cM_j$ be a  von Neumann algebra. Assume that $\cM_j$ is represented on the standard Hilbert space $\cH_j$. Let $\cH = \prod_{j, \omega} \cH_j$ be the ultraproduct Hilbert space. A bounded sequence $(x_j)_\omega$ with $x_j \in \cM_j$ acts on $\cH$ by $(x_j)_{\omega} (\xi_j)_\omega = (x_j \xi_j)_\omega$. The closure in the strong topology of such $(x_j)_\omega$ in $\cB(\cH)$ is called the Groh-Raynaud ultraproduct. We denote this ultraproduct by $\cM$.  In \cite{Raynaud} (see also \cite{Groh}) it was proved that
$\cM_\ast \simeq \prod_{j, \omega} (\cM_j)_\ast$  by extending the pairing $\langle  (\rho_j)_\omega, (x_j)_\omega \rangle = \lim_{j, \omega} \rho_j(x_j)$.

The aim of this paper is the study of ultraproduct weights. Whereas one can easily define ultraproducts of (normal) functionals it is {\it a priori} not clear how ultraproducts of normal weights should be defined. The problem lies in the fact that there is no bound on a sequence $(\varphi_j)_\omega$ and therefore a naive definition $\langle (\varphi_j)_\omega, (x_j)_\omega \rangle = \lim_{j, \omega} \varphi_j(x_j)$ does not make sense. Indeed it could very well be that the right hand side of this expression is non-zero whereas $(x_j)_\omega$ is (the equivalence class of) the 0 element.

In \cite{AndoHaagerup} ultraproducts of weights were discussed briefly for the first time. The approach is   defined  in the case that $\varphi_j$ is a fixed weight on a fixed von Neumann algebra (i.e. the weight and von Neumann algebra do not depend on $j$). In this case one sets $(\varphi)_\omega = \varphi \circ \mathcal{E}$ where $\mathcal{E}$ is the conditional expectation of the ultraproduct on the von Neumann subalgebra of fixed sequences.  This gives a {\it different} notion of ultraproduct weights than Definition \ref{Dfn=Intro}, see Remark \ref{Rmk=AndoHaagerupWeight}. The weights from \cite{AndoHaagerup} are sometimes insufficient for applications. For example one cannot prove Corollary \ref{Cor=Lp} because there is very few control over the supports of the ultraproduct weights from \cite{AndoHaagerup}. Moreover, typical weights appearing in the literature {\it do} depend on the index.  For example the weights occuring in \cite[Section 5]{CaspersSalle} are given by ${\rm Tr}(P_F \cdot  P_F)$ on $\cB(L^2(G))$ where $F$ are F\o{}lner sets on a locally compact group $G$ (the F\o{}lner sets give the net structure).

\vspace{0.3cm}

In this paper we introduce ultraproducts of weights in the following way.

 \begin{dfn}\label{Dfn=Intro} Let $\varphi_j$ be a normal (not necessarily faithful or semi-finite) weight on $\cM_j$. Then we define the ultraproduct weight $\varphi = (\varphi_j)_\omega$ as
 \[
 (\varphi_j)_\omega :=  \sup \: \{  \rho = (\rho_j)_\omega \mid \rho_j \in (\cM_j)_\ast^+, \rho_j \leq \varphi_j, \Vert \rho_j \Vert \textrm{ bounded in } j \}.
 \]
 \end{dfn}

 \vspace{0.3cm}

Note that the definition  agrees with the ultraproduct of bounded normal functionals. We now summarize the main results of this paper. Firstly we prove that the modular theory of an ultraproduct localizes in the sense of the following theorem.

\vspace{0.3cm}

\noindent {\bf Theorem A.} Let $\cM_j$ be von Neumann algebras with normal, semi-finite, faithful weights $\varphi_j$. Let $\cM = \prod_{j, \omega} \cM_j$ and let $\varphi = (\varphi_j)_\omega$ be the ultraproduct weight on $\cM$. Assume that $\varphi$ is semi-finite. Let $p_\varphi$ be the support projection of $\varphi$. Then,
\[
\sigma^{\varphi}_t(p_\varphi (x_j)_\omega p_\varphi) =  p_\varphi (\sigma^{\varphi_j}_t(x_j))_\omega p_\varphi.
\]

\vspace{0.3cm}

We also prove the analogue of Theorem A for cocycle derivatives, see Theorem \ref{Thm=CocycleDerivative}. Theorem A  generalizes the result in \cite{Raynaud} and \cite{AndoHaagerup} where it was proved   for the case that  $\varphi_j$'s  are   bounded uniformly in $j$. Note that \cite{Raynaud} and \cite{AndoHaagerup} give different proofs. In this paper we give a third proof through the theory of spatial derivatives. In fact we prove the following, which is (opposed to Theorem A) new even in the case that the $\varphi_j$'s are  bounded functionals.

\vspace{0.3cm}

\noindent {\bf Theorem B.} Let $\cM_j$ be von Neumann algebras with normal, semi-finite, faithful weights $\varphi_j$. Let $\cM = \prod_{j, \omega} \cM_j$ and let $\varphi = (\varphi_j)_\omega$ be the ultraproduct weight on $\cM$ with support projection $p_\varphi$. Assume that $\varphi$ is semi-finite.  Let $J_j$ be the modular conjugation acting on the standard Hilbert space $\cH_j$ of   $\cM_j$ and let $J_\omega = (J_j)_\omega$ which acts on $\cH = \prod_{j, \omega} \cH_j$. Suppose that $\psi_j$ are normal, semi-finite, faithful weights on the commutant $\cM_j'$ with ultraproduct $\psi = (\psi_j)_\omega$.  Assume that $\psi$ is semi-finite.  Let $r_\psi$ be the support of $\psi$ and set $p_\psi = J_\omega r_\psi J_\omega, q_\psi = p_\psi r_\psi$. Assume that $p_\varphi \leq p_\psi$. Then $q_\psi(\cH)$ is an invariant subspace of $\left( \frac{d \varphi_j}{ d \psi_j} \right)_\omega$ and,
\begin{equation}\label{Eqn=ThmB}
   \frac{d \varphi}{ d \psi}  = q_\psi \left( \frac{d \varphi_j}{ d \psi_j} \right)_\omega q_\psi.
\end{equation}

\vspace{0.3cm}

 The statement of Theorem B uses ultraproducts of positive self-adjoint operators which we define in this paper and takes values in the extended positive cone. So \eqref{Eqn=ThmB} needs to be interpreted in the extended positive cone of $\cM$. Along the way of proving Theorem B we also discuss whether or not every normal weight on $\cM$ can be written as an ultraproduct of weights on $\cM_j$. For states this is true, but for weights it no longer holds, see Proposition \ref{Prop=Decomposition}. In fact  it seems that normal, semi-finite and  faithful  weights on an ultraproduct von Neumann algebras rarely appear as an ultraproduct.

Using Theorem B we can show that ultraproduct von Neumann algebras admit locally a nice interpolation structure. As a consequence we reprove Raynaud's theorem \cite[Theorem 3.1]{Raynaud}.

\vspace{0.3cm}

\noindent {\bf Theorem C.} We have,
\begin{equation}\label{Eqn=ThmC}
L^p(\prod_{j, \omega}  \cM_j ) \simeq \prod_{j, \omega} L^p(\cM_j).
\end{equation}

 \vspace{0.3cm}

The crucial novelty lies in {\it how}  Theorem C was proved, see Theorem \ref{Thm=Lp} and Corollary \ref{Cor=Lp} (we found it more suitable to surpress these slightly more technical statements in Theorem C making them more explicit in Section \ref{Sect=Lp}).  We show that the isomorphism of \eqref{Eqn=ThmC} is canonical and allows us to see $\prod_{j, \omega} L^p(\cM_j)$ locally as complex interpolation spaces in a natural way through this isomorphism.   In many situations this complex interpolation is useful. For example   in \cite{CaspersSalle} isometric embeddings of $L^p$-spaces were considered. It follows from our result that these isometries may be obtained through interpolation as well. Similar situations/ideas occur in \cite{CPPR} and \cite{Gonzalez}.

In the last part of this paper, in Appendix \ref{Sect=AppendixA}, we show that spatial derivatives take a natural form in terms of noncommutative $L^p$-spaces. These results are probably known amongst experts but have not appeared in the literature so far.

\vspace{0.3cm}

\noindent {\it Structure.} In Section \ref{Sect=Prelim} we recall preliminaries on ultraproducts and Tomita-Takesaki theory. Section \ref{Sect=LpPrelim} contains preliminaries on noncommutative $L^p$-spaces associated with an arbitrary von Neumann algebra. In Section \ref{Sect=Ultra} we introduce and study ultraproducts of weights. In particular we prove Theorem A and Theorem B.   Section \ref{Sect=Lp} is devoted to a proof of Theorem C. The Appendix \ref{Sect=AppendixA} contains explicit forms of spatial derivatives on the standard form. We use these results in the main part of this paper.

\vspace{0.3cm}

\noindent {\it General notation.} We use $[x]$ for the closure of an operator $x$ and $x \cdot y = [xy]$. 

\section{Preliminaries}\label{Sect=Prelim}
For standard results on von Neumann algebras we refer to \cite{TakI} and \cite{TakII} and adopt most of its notation. Von Neumann algebras will typically be denoted by $\cM$ and $\cN$ and Hilbert spaces by $\cH$. $\cN$ is usually an arbitrary von Neumann algebra whereas $\cM$ shall be an ultraproduct.  We use $\cN_\ast$ for the predual and it induces the $\sigma$-weak topology on $\cN$. We freely use the various other topologies on $\cN$ summarized in \cite[Theorem II.2.6]{TakI}. $\cN^+$ is the cone of positive (bounded) operators in $\cN$ and $\cN_\ast^+$ the space of positive normal functionals. For $x \in \cN$ and $\rho \in \cN_\ast$ we write $\rho x$ for the functional $(\rho x)(y)  = \rho(xy)$ and similarly $(x \rho)(y) = \rho(yx)$. For $\xi \in \cH$ we write $\rho_{\xi, \xi}$ for the inner product functional $\rho_{\xi, \xi}(x) = \langle x \xi, \xi \rangle$.

\subsection{Weight theory} Let $\cN$ be a von Neumann algebra. A {\it weight} on $\cN$ is a map $\varphi: \cN^+ \rightarrow [0, \infty]$ that satisfies:
\begin{enumerate}
\item $\varphi(\lambda x) = \lambda \varphi(x)$ for $x \in \cN^+$ and  $\lambda \geq 0$;
 \item $\varphi(x+y) = \varphi(x) + \varphi(y)$ for $x,y \in \cN^+$.
 \end{enumerate}
 Alternatively one can say that $\varphi$ preserves convex combinations on the cone $\cN^+$. We set,
 \[
 \nphi = \{ x \in \cN \mid \varphi(x^\ast x) < \infty \}.
 \]
 From the inequality $x^\ast y^\ast y x \leq \Vert y \Vert^2 x^\ast x$ it follows that $\varphi(x^\ast y^\ast y x) \leq \Vert y\Vert^2 \varphi(x^\ast x)$ and therefore $\nphi$ is a left ideal. We set $\mphi$ to be the span of $\nphi^\ast \nphi$. Also set $\mphi^+ = \mphi \cap \cN^+$.  Then $\mphi$ is the linear span of $\mphi^+$. Recall the following definitions.
 \begin{enumerate}
 \item The weight $\varphi$ is called {\it faithful} if $\varphi(x) = 0, x \in \cN^+$ implies that $x = 0$;
 \item The weight $\varphi$ is called {\it semi-finite} if $\nphi$ is $\sigma$-weakly dense in $\cN$. This is equivalent to $\sigma$-weak density of $\mphi$ in $\cN$ or $\sigma$-weak density if $\mphi^+$ in $\cN^+$;
 \item The weight $\varphi$ is called {\it normal} if $\varphi(\sup_k x_k) = \sup_k \varphi(x_k)$ for every bounded increasing net $\{x_k\}_{k \in K}$ in $\cN^+$.
\end{enumerate}

Every weight in this paper shall be normal and from this point we assume that so is $\varphi$. Let $e \in \cN$ be the projection such that $\cN e$ equals the $\sigma$-strong closure of $\nphi$. Let $f \in \cN$ be the projection such that $\cN f$ equals $\{ x \in \cN \mid \varphi(x^\ast x ) = 0 \}$. $\varphi$ is semi-finite on $e \cN e$ and faithful on $f \cN f$. The projection $e - f$ is called the {\it support} of $\varphi$ and we denote it by $p_\varphi$. Note that in this paper we also consider supports of weights  $\psi$ on the commutant $\cN'$ which shall be denoted by $r_\psi$ to distinguish. If $\cN$ is represented on the standard Hilbert space we set the opposite support $p_\psi = J r_\psi J$ where $J$ is the modular conjugation of the standard form defined below.

  Now assume that $\varphi$ is normal, semi-finite and faithful.  We have a non-degenerate inner product,
\[
\nphi \times \nphi \rightarrow \mathbb{C}: (x,y) \mapsto \varphi(y^\ast x),
\]
from which we may complete $\nphi$ to a Hilbert space $\cH_\varphi$. We write $\Lambda_\varphi: \nphi \rightarrow \cH_\varphi$ for the identification map. $\nphi$ is a $\sigma$-weak/norm closed map.  We have a normal representation of $\cN$ on $\cH_\varphi$ given by $\pi_\varphi(x) \Lambda_\varphi(y) = \Lambda_\varphi(xy)$ where $x \in \cN$ and $y \in \nphi$. We may identify $\cN$ with its image $\pi_\varphi(\cN)$ and shall omit $\pi_\varphi$ in the notation. We let,
\[
S_0: \: (\subseteq \mathcal{H}_\varphi) \rightarrow \cH_\varphi: \Lambda_{\varphi}(x) \mapsto \Lambda_\varphi(x^\ast), \qquad x \in \nphi \cap \nphi^\ast.
\]
$S_0$ is preclosed and we set is closure to be $S$. $S$ has polar decomposition $S = J \Delta^{\frac{1}{2}}$. Here the anti-linear isometry $J$ is called the {\it modular conjugation} and the positive operator $\Delta$ is called the {\it modular operator}. We set $\sigma^\varphi_t(x) = \Delta^{it} x \Delta^{-it}, t \in \mathbb{R}, x \in \cN$ which is the {\it modular automorphism group} of $\varphi$. Tomita-Takesaki theory shows that $\sigma^\varphi$ is a strongly continuous 1--parameter group of automorphisms of $\cN$ (i.e. the non-trivial part is that it preserves $\cN$).

We need a couple of standard tools from modular theory. Firstly we set,
\[
\mathcal{T}_\varphi = \{ x \in \cN \mid x \textrm{ is analytic with respect to } \sigma^\varphi \textrm{ and } \forall z \in \mathbb{C}: \sigma^\varphi_z(x) \in \nphi \cap \nphi^\ast \}.
\]
If we view $\mathcal{T}_\varphi$ as  a subset of $\cH_\varphi$ then
$\mathcal{T}_\varphi$ is a {\it Tomita algebra} as explained in \cite{TakII}.  For $a,b \in \mathcal{T}_\varphi$ there exists a normal state $\varphi_{ab}$ that is defined by
\[
\varphi_{ab}(x) =  \varphi( bx \sigma_{-i}^\varphi(a)).
\]
 The set of such states is  dense in $\cN_\ast$. If $\varphi$ is a state then $\varphi_{ab} = \varphi ab$.
 The following lemma is a standard approximation argument. Recall that on bounded sets the strong and $\sigma$-strong topology agree so that the lemma can in fact be derived from (the proof of) \cite[Lemma 9]{TerpII}.
\begin{lem}\label{Lem=Approx}
There exists a net $a_j \in \mathcal{T}_\varphi$ such that for every $z \in \mathbb{C}$ the net $\Vert \sigma^\varphi_z(a_j) \Vert$ is bounded and such that  $a_j \rightarrow 1$ and $\sigma^{\varphi}_{i/2}(a_j) \rightarrow 1$ in the $\sigma$-strong topology.
\end{lem}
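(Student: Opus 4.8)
The plan is to produce $a_j$ by Gaussian (heat-kernel) smoothing of a bounded approximate identity for $\varphi$, with the essential twist that the smoothing width is kept \emph{fixed} while the approximate identity carries all the convergence. The point I expect to be the genuine obstacle is the simultaneous requirement that $a_\alpha \to 1$, that $\sigma^\varphi_{i/2}(a_\alpha) \to 1$, and that $\|\sigma^\varphi_z(a_\alpha)\|$ stay bounded in $\alpha$: the more obvious device of letting the Gaussian width tend to infinity forces $a_\alpha \to 1$ but at the same time destroys the uniform bound and any control of $\sigma^\varphi_{i/2}$. Keeping the width fixed and letting the approximate identity do the approximating reconciles all three demands.

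First I would fix, using semi-finiteness of $\varphi$, an increasing net $e_\alpha \in \mphi^+$ with $0 \le e_\alpha \le 1$ and $e_\alpha \uparrow 1$ $\sigma$-strongly. Setting $b_\alpha = e_\alpha^{1/2}$ one has $b_\alpha = b_\alpha^*$ and $\varphi(b_\alpha^* b_\alpha) = \varphi(e_\alpha) < \infty$, so $b_\alpha \in \nphi \cap \nphi^*$, while operator monotonicity of the square root gives $b_\alpha \uparrow 1$ $\sigma$-strongly. I then define
\[
a_\alpha = \frac{1}{\sqrt{\pi}}\int_{\mathbb{R}} e^{-t^2}\,\sigma^\varphi_t(b_\alpha)\,dt,
\]
the smoothing of $b_\alpha$ against a fixed Gaussian.

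To see that $a_\alpha \in \mathcal{T}_\varphi$, I would note that $t \mapsto \Lambda_\varphi(\sigma^\varphi_t(b_\alpha)) = \Delta^{it}\Lambda_\varphi(b_\alpha)$ is norm-continuous and bounded, so the integral converges in $\cH_\varphi$; approximating by Riemann sums and using that $\Lambda_\varphi$ is $\sigma$-weak/norm closed places $a_\alpha$ in $\nphi$. Shifting all the $z$-dependence into the Gaussian exhibits the entire analytic continuation as
\[
\sigma^\varphi_z(a_\alpha) = \frac{1}{\sqrt{\pi}}\int_{\mathbb{R}} e^{-(t-z)^2}\,\sigma^\varphi_t(b_\alpha)\,dt, \qquad z \in \mathbb{C},
\]
and the same closedness argument, together with the self-adjointness $a_\alpha = a_\alpha^*$, gives $\sigma^\varphi_z(a_\alpha) \in \nphi \cap \nphi^*$ for every $z$. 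From $|e^{-(t-z)^2}| = e^{(\ImP z)^2}\,e^{-(t-\ReP z)^2}$ one reads off $\|\sigma^\varphi_z(a_\alpha)\| \le e^{(\ImP z)^2}$, bounded in $\alpha$ for each fixed $z$.

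For the convergence I would use that $\frac{1}{\sqrt{\pi}}\int_{\mathbb{R}} e^{-(t-z)^2}\,dt = 1$ for every $z$ (a contour shift of the entire Gaussian), so that for $\xi \in \cH$
\[
\sigma^\varphi_z(a_\alpha)\xi - \xi = \frac{1}{\sqrt{\pi}}\int_{\mathbb{R}} e^{-(t-z)^2}\big(\sigma^\varphi_t(b_\alpha)\xi - \xi\big)\,dt.
\]
For each fixed $t$ one has $\sigma^\varphi_t(b_\alpha) = \Delta^{it} b_\alpha \Delta^{-it} \to 1$ strongly, and the integrand is dominated by $2\|\xi\|\,e^{(\ImP z)^2}e^{-(t-\ReP z)^2}$; dominated convergence then gives $\sigma^\varphi_z(a_\alpha) \to 1$ strongly, hence $\sigma$-strongly on this bounded net. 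Specialising to $z = 0$ and $z = i/2$ — the decisive computation here being the value $\frac{1}{\sqrt{\pi}}\int_{\mathbb{R}} e^{-(t-i/2)^2}\,dt = 1$ of the shifted Gaussian integral — yields the two claimed limits.
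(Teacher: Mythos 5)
Your proof is correct and is essentially the argument the paper has in mind: the paper does not prove the lemma itself but refers to (the proof of) Terp's Lemma 9, which is exactly this Gaussian regularization $a_\alpha = \pi^{-1/2}\int e^{-t^2}\sigma^\varphi_t(b_\alpha)\,dt$ of an approximate unit drawn from $\mathfrak{m}_\varphi^+$, with the closedness of $\Lambda_\varphi$ and the bound $|e^{-(t-z)^2}| = e^{(\ImP z)^2}e^{-(t-\ReP z)^2}$ doing the work. The only cosmetic point is that your appeal to "dominated convergence" over the net $\alpha$ should be read as the standard splitting argument (uniform convergence of the bounded net $b_\alpha \to 1$ on the norm-compact set $\{\Delta^{-it}\xi : |t|\le T\}$ plus a small Gaussian tail), which is exactly how Terp handles it.
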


\subsection{Standard forms} We first recall the definition of the standard form as it was reduced to in  \cite[Lemma 3.19]{AndoHaagerup}. See also \cite{HaagerupScan}.

 \begin{dfn} \label{Dfn=Standard}
Consider a  4-tuple $(\cN, \cH, J, \mathcal{P})$ consisting of a von Neumann algebra $\cN$ that is represented on a Hilbert space $\cH$, an anti-linear isometry $J$ on $\cH$ with $J^2 = 1$ and $\mathcal{P} \subseteq \cH$ a closed convex cone which is self-dual, i.e. $\mathcal{P} = \mathcal{P}^0$ where
\[
\mathcal{P}^{0} = \{ \xi \in \cH \mid \langle \xi, \eta \rangle \geq 0, \eta \in \mathcal{P} \}.
\]
Then $(\cN, \cH, J, \mathcal{P})$ is called a {\it standard form} if $J \cN J = \cN'$, $J \xi = \xi, \xi \in \mathcal{P}$, $x JxJ(\mathcal{P}) \subseteq \mathcal{P}, x \in \cN$.
 \end{dfn}

 Standard forms are unique in the sense that if $(\cN', \cH', J', \mathcal{P}')$ is another standard form for which $\cN$ and $\cN'$ are isomorphic, then there exists a   unitary $U: \cH \rightarrow \cH'$ such that $\cN = U^\ast \cN' U, J = U^\ast J' U$ and $U \mathcal{P} = \mathcal{P}'$. Moreover, this unitary is unique. For $\rho \in \cN_\ast$ there exists a unique vector $\xi \in \mathcal{P}$ such that $\rho = \rho_{\xi, \xi}$. We will denote this vector with $D_\rho^{\frac{1}{2}}$, see below.


\subsection{The extended positive cone}\label{Sect=ExtendedPos}
Let $\cN$ be a von Neumann algebra. The extended positive cone $\cNext$ is defined as the space of all lower semi-continuous mappings $\cN_\ast^+ \rightarrow [0, \infty]$ that preserve convex combinations. Note that $\cNext$ is itself a cone, meaning that it is closed under taking convex combinations and multiplication by scalars $\geq 0$. Each $x \in \cNext$ admits a spectral resolution
\begin{equation}\label{Eqn=Integral}
\langle x, \rho \rangle_{\cNext, \cN_\ast^+ } = \int_{0}^\infty \lambda \: d\rho(e_\lambda) + \rho(p) \cdot \infty,
\end{equation}
where $e_\lambda, \lambda \in [0, \infty)$ is an increasing net of projections and $p = 1 - \sup_\lambda e_\lambda$. We take the convention $0 \cdot \infty = 0$. We may naturally view $\cN^+$ inside $\cNext$ as the set of all elements for which the projection $p$ on the infinite part is 0 and $e_\lambda$ is the spectral resolution of a bounded operator (that is, there exists $\lambda$ such that $e_\lambda = 1$). For $x \in \cNext$ and $q \in \cN$ a projection we say that $x$ and $q$ commute if all $e_\lambda, \lambda \geq 0$ and $p$ in \eqref{Eqn=Integral} commute with $q$. We shall also write $P_\infty(x)$ for $p$ and $P_{< \infty}(x)$ for $1- p$.

\subsection{Spatial derivatives}

Spatial derivatives for von Neumann algebras have been introduced by Connes in \cite{Connes}. A comprehensive summary may also be found in \cite{TerpII}.
Let $\cN$ be a von Neumann algebra acting on a Hilbert space $\cH$. We emphasize that in general $\cH$ is not necessarily the  standard form Hilbert space, but in this paper this is always the case.   We fix a normal, semi-finite, faithful weight $\psi$ on the commutant $\cN'$. Let $(\cH_\psi, \Lambda_\psi, \pi_\psi)$ be a GNS-construction for $\psi$. For $\xi \in \cH$ we define $R^\psi(\xi)$ as the densely defined operator given by the closure of the (preclosed) mapping:
\begin{equation}\label{Eqn=ROperator}
 \Lambda_\psi(x) \mapsto x \xi, \qquad x \in \mathfrak{n}_{\psi}.
\end{equation}
 Let $D(\cH, \psi)$ denote the vectors for which the operator $R^\psi(\xi)$ is bounded.  The vectors in $D(\cH, \psi)$ are  called the {\it $\psi$-bounded vectors}. It is easy to check that $R^\psi(x \xi) = x R^\psi(\xi), x \in \cN$ and in particular that $D(\cH, \psi)$ is invariant under $\cN$. Also $R^\psi( \xi) \pi_\psi(y) = y R^\psi(\xi), y \in \cN'$ so that it follows that for $\xi \in D(\cH, \psi)$ the operator $R^\psi(\xi) R^\psi(\xi)^\ast \in \cN$. More generally there exists $\theta^\psi(\xi, \xi) \in \cNext$ determined by
 \[
 \langle \rho_{\eta, \eta}, \theta^\psi(\xi, \xi) \rangle = \left\{
 \begin{array}{ll}
 \Vert R^\psi(\xi)^\ast \eta \Vert^2_{\cH_\psi} & \eta \in \Dom(R^\psi(\xi)^\ast) \\
 \infty& \textrm{otherwise.}
 \end{array}
 \right.
 \]
 Let $\varphi$ be a normal (not necessarily semi-finite or faithful) weight on $\cN$.  We define a closed quadratic form $q_\varphi(\xi) = \langle \varphi, \theta^\psi(\xi, \xi)\rangle$ which has domain all $\xi \in D(\cH, \psi)$ such that $\theta^\psi(\xi, \xi) \in \mphi^+$. For all other $\xi$ we have $q_\varphi(\xi) = \infty$. Then the  spatial derivative $\frac{d\varphi}{ d\psi}$ is   defined as the element in $\cB(\cH)^+_{{\rm ext}}$ determined by:
\[
\langle \left( \frac{d\varphi}{ d\psi}\right)^\frac{1}{2} \xi, \left( \frac{d\varphi}{ d\psi}\right)^\frac{1}{2} \xi \rangle  =
q_\varphi(\xi).
\]
If $\varphi$ is semifinite then $\frac{d\varphi}{ d\psi}$ is an unbounded positive self-adjoint operator on $\cH$. The support of $\varphi$ equals the support of $\frac{d\varphi}{ d\psi}$. Recall also that if $\varphi$ is semi-finite and faithful -- so that $\frac{d\varphi}{d\psi}$ is invertible as an unbounded operator -- we have
\begin{equation}\label{Eqn=SpacialModular}
\begin{split}
\left( \frac{d\varphi}{d\psi} \right)^{it} x   \left( \frac{d\varphi}{d\psi} \right)^{-it} = & \sigma_t^{\varphi}( x ), \qquad x \in \cN.
\end{split}
\end{equation}
In Appendix \ref{Sect=AppendixA} we relate spatial derivatives to noncommutative $L^p$-spaces.

\section{Noncommutative $L^p$-spaces}\label{Sect=LpPrelim}

Noncommutative $L^p$-spaces associated with an arbitrary von Neumann algebra were introduced by various people. In \cite{HaaLps}, \cite{TerpI} Haagerup defined them as subspaces of weak $L^p$-spaces of the core of a von Neumann algebra (recall that the core is defined as $\cN \rtimes_{\sigma^{\varphi}} \mathbb{R}$ with   $\varphi$ a normal, semi-finite, faithful weight on $\cN$). Later Hilsum \cite{Hilsum} gave another definition based on Connes' spatial derivative \cite{Connes}. Note that Hilsum's approach in \cite{Hilsum} in fact uses Haagerup's earlier construction to show that his $L^p$-spaces form a vector space. Other approaches through the complex interpolation method can be found in the papers by Kosaki \cite{Koz}, Terp \cite{TerpII} and Izumi \cite{Izumi}.

Here we use Hilsum's definition as it is closest to the spatial theory we already studied. Of course our results translate into any of the other definitions mentioned.

\subsection{Definitions}

Let $\cN$ be a von Neumann algebra acting on a Hilbert space $\cH$. Let $\psi$ be a normal, semi-finite, faithful weight on the commutant $\cN'$. A closed densely defined operator $x$ on $\cH$ is called {\it $\gamma$-homogeneous} with $\gamma \in \mathbb{R}$ if,
\[
x a \subseteq \sigma^\psi_{i \gamma}(a) x, \qquad \textrm{ for every } a \in \cN' \textrm{ analytic for } \sigma^\psi.
\]
We now have the following theorem, see \cite{Connes}.
\begin{thm}
Let $x$ be a closed, densely defined operator on a Hilbert space $\cH$.
Write $x = u \vert x \vert$ for the polar decomposition. Then the following are equivalent:
\begin{enumerate}
\item $x$ is $(-1)$-homogeneous.
\item  $u \in \cN$ and $\vert x \vert$ is $(-1)$-homogeneous.
\item $u \in \cN$ and $\vert x \vert$ equals the spatial derivative $\frac{d\varphi}{d\psi}$ for some normal, semi-finite weight $\varphi$ on $\cN$.
\end{enumerate}
\end{thm}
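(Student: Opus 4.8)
The plan is to establish the cycle $(2)\Rightarrow(1)$, $(1)\Rightarrow(2)$, $(2)\Rightarrow(3)$, $(3)\Rightarrow(2)$, organised around a short calculus of homogeneous operators on the standard Hilbert space $\cH$. The three facts I would isolate first are: (i) if $x$ is $\gamma$-homogeneous then so is $x^\ast$, since taking adjoints in $xa\subseteq\sigma^\psi_{i\gamma}(a)x$ and using $\sigma^\psi_{i\gamma}(a)^\ast=\sigma^\psi_{-i\gamma}(a^\ast)$ gives, after relabelling the analytic element, $x^\ast b\subseteq\sigma^\psi_{i\gamma}(b)x^\ast$; (ii) if $x$ is $\gamma$-homogeneous and $y$ is $\gamma'$-homogeneous, then the closure of $yx$ is $(\gamma+\gamma')$-homogeneous, because $yxa\subseteq y\sigma^\psi_{i\gamma}(a)x\subseteq\sigma^\psi_{i(\gamma+\gamma')}(a)yx$; and (iii) a positive self-adjoint $T$ is $\beta$-homogeneous exactly when its imaginary powers implement $\sigma^\psi$ on $\cN'$, and then $T^{s}$ is $(\beta s)$-homogeneous for every $s\geq0$ (for $\beta=-1$ the implementing relation reads $T^{it}aT^{-it}=\sigma^\psi_{t}(a)$, $a\in\cN'$). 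The equivalence in (iii) between the infinitesimal relation $Ta\subseteq\sigma^\psi_{i\beta}(a)T$ and this bounded ``unitary'' form — obtained on the support of $T$ by the standard integration/analytic-continuation argument for modular objects — is the only genuinely technical point of the calculus; everything else is formal.

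Granting this, $(2)\Rightarrow(1)$ is immediate: if $u\in\cN$ and $|x|$ is $(-1)$-homogeneous then, for analytic $a\in\cN'$, $xa=u|x|a\subseteq u\,\sigma^\psi_{-i}(a)|x|=\sigma^\psi_{-i}(a)\,u|x|=\sigma^\psi_{-i}(a)x$, the middle equality because $u\in\cN$ commutes with $\sigma^\psi_{-i}(a)\in\cN'$. For $(1)\Rightarrow(2)$ I would feed the polar decomposition $x=u|x|$ through the calculus. By (i) and (ii) the operator $x^\ast x$ is $(-2)$-homogeneous; being positive self-adjoint, (iii) with $s=\tfrac12$ shows $|x|=(x^\ast x)^{1/2}$ is $(-1)$-homogeneous. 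There remains $u\in\cN$. Comparing the homogeneity relations of $x$ and of $|x|$ on the initial space $\overline{\mathrm{ran}}\,|x|$ shows $u$ commutes there with every $\sigma^\psi_{-i}(a)$, $a$ analytic, hence with all of $\cN'$ by $\sigma$-weak density of these elements. Together with the fact that the support projections of $x^\ast x$ and $xx^\ast$ — the initial and final projections of $u$ — lie in $\cN$ (again via the normalisation of $\cN'$ by the imaginary powers in (iii)), this yields $u\in\cN''=\cN$.

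Finally the bridge to spatial derivatives. For $(3)\Rightarrow(2)$ I read off $(-1)$-homogeneity of $\tfrac{d\varphi}{d\psi}$ from the fact that its imaginary powers implement $\sigma^\psi$ on the commutant — the $\cN'$-analogue of \eqref{Eqn=SpacialModular}, namely $(\tfrac{d\varphi}{d\psi})^{it}a(\tfrac{d\varphi}{d\psi})^{-it}=\sigma^\psi_{t}(a)$ for $a\in\cN'$ — which is precisely the bounded form from (iii) with $\beta=-1$; analytic continuation to $t=-i$ gives $\tfrac{d\varphi}{d\psi}\,a\subseteq\sigma^\psi_{-i}(a)\tfrac{d\varphi}{d\psi}$. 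The substantial implication is $(2)\Rightarrow(3)$: from a positive self-adjoint $(-1)$-homogeneous $T=|x|$ I must manufacture a normal, semi-finite weight $\varphi$ on $\cN$ with $\tfrac{d\varphi}{d\psi}=T$. Following the definition of the spatial derivative recalled in Section~\ref{Sect=Prelim}, the natural candidate is the functional determined on the positive elements $\theta^\psi(\xi,\xi)\in\cNext$, $\xi\in D(\cH,\psi)$, by $\varphi(\theta^\psi(\xi,\xi)):=\|T^{1/2}\xi\|^2$ (and $+\infty$ where $T^{1/2}\xi$ is undefined), so that by design $q_\varphi(\xi)=\langle\varphi,\theta^\psi(\xi,\xi)\rangle=\|T^{1/2}\xi\|^2$ and hence $\tfrac{d\varphi}{d\psi}=T$.

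\textbf{Here I expect the real work to lie.} One must check that this prescription is unambiguous and additive on the cone it generates (most cleanly via the polarised object $\theta^\psi(\xi,\eta)$), that it extends to a \emph{normal} weight, and — using the $(-1)$-homogeneity of $T$, i.e. invariance of $\Dom(T^{1/2})$ under the modular unitaries of $\psi$ — that enough elements $\theta^\psi(\xi,\xi)$ land in $\mphi^+$ to make $\varphi$ semi-finite. This is exactly Connes' construction in \cite{Connes}, which I would either invoke directly or reprove along these lines; by contrast the remaining implications are comparatively soft.
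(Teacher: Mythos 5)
The paper gives no proof of this theorem at all — it is quoted directly from \cite{Connes} (``We now have the following theorem, see \cite{Connes}'') — so there is no internal argument to compare against. Your outline is a faithful reconstruction of Connes' original proof and is sound: the cycle of implications is organised correctly, and you have correctly isolated the two points that carry all the weight, namely the equivalence between the infinitesimal relation $Ta\subseteq\sigma^\psi_{i\beta}(a)T$ and its unitary form $T^{it}aT^{-it}=\sigma^\psi_{-\beta t}(a)$ on the support of $T$, and the construction of a normal semi-finite weight $\varphi_T$ on $\cN$ with $\frac{d\varphi_T}{d\psi}=T$ in the implication $(2)\Rightarrow(3)$; both are exactly the content of \cite{Connes} and it is legitimate to invoke them. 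The only places deserving slightly more care than you give them are the non-injective case (where $T^{it}$ is unitary only on $\overline{\mathrm{ran}}\,T$, so one must first verify that the support projection of $T$ lies in $\cN$ before restricting) and the density, inside $\overline{\mathrm{ran}}\,|x|$, of the vectors $|x|\eta$ with $\eta\in\Dom(|x|a)$ that you implicitly use to conclude that $u$ commutes with $\cN'$; neither is a gap in substance.
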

In particular every positive self-adjoint $(-1)$-homogeneous operator occurs as a spatial derivative. Also note that if $x \geq 0$ is $(-1/p)$-homogeneous then $x^p$ is $(-1)$-homogeneous.   This allows us to set the following definition.
\begin{dfn}
Let $\cN$ be a von Neumann algebra. Let $\psi$ be a normal, semi-finite, faithful weight on the commutant $\cN'$. The Connes--Hilsum $L^p$-space $L^p(\cN, \psi)$ is defined as the space of all closed, densely defined $(-1/p)$-homogeneous operators $x$ such that if $x = u \vert x \vert$ is the polar decomposition then $\vert x \vert^p = \frac{d \rho}{ d\psi}$ for some $\rho \in \cN_\ast^+$.  We define,
\[
\Vert x \Vert_p = \rho(1)^{1/p}.
\]
\end{dfn}
Connes--Hilsum $L^p$-spaces are Banach spaces sharing many of the properties of classical $L^p$-spaces, such as H\"older estimates, reflexivity,  interpolation, et cetera. It is proved in \cite{TerpII} that for two choices of normal, semi-finite, faithful weights $\psi_1$ and $\psi_2$ we have that $L^p(\cN, \psi_1)$ and $L^p(\cN, \psi_2)$ are isometrically isomorphic. We will write $L^p(\cN)$ in case it is clear which weight $\psi$ is chosen.

\subsection{Interpolation} In \cite{TerpII} Terp proved that $L^p(\cN)$ is a complex interpolation space between $\cN$ and its predual $\cN_\ast$. We give a brief description here. More details on the complex interpolation method can be found in \cite{BerghLof}. We also refer   the reader to  \cite{CaspersLpf} for a slightly more elaborate discussion of what we introduce in this section.

Fix again a normal, semi-finite, faithful weight $\psi$ on $\cN'$ and denote the associated $L^p$-spaces by $L^p(\cN)$. Let $\varphi$ be a normal, semi-finite, faithful weight on $\cN$. We define the space
\[
K = \{ x \in \cN \mid \exists \varphi_x^{(-1/2)} \in \cN_\ast \textrm{ s.t. } \varphi_x^{(-1/2)}(y^\ast z) = \langle J x^\ast J \Lambda_\varphi(z), \Lambda_\varphi(y)  \rangle   \}.
\]
It is proved in \cite{TerpII} that $\mphi \subseteq K$.
By definition there is a map $j_\infty: K \hookrightarrow \cN: x \mapsto x$. Also there exists a map $j_1: K \hookrightarrow \cN_\ast: x \mapsto \varphi_x$. If we equip $K$ with the norm $\Vert x \Vert_K = \max \{ \Vert x \Vert, \Vert \varphi_x \Vert \} $ then the embeddings $j_1$ and $j_\infty$ are contractions. Dualizing them yields a commutative diagram:
\begin{equation}
 \xymatrix{
 & \cM_\ast\ar@{^{(}->}[dr]^{j_\infty^{\ast}}  & \\
   K  \ar@{^{(}->}[ur]^{j_1}\ar@{^{(}->}[dr]_{j_\infty}  & & K^\ast, \\
& \cM  \ar@{^{(}->}[ur]^{j_1^\ast} &}    \label{EqnLpIzu}
\end{equation}
Within $K^\ast$ it makes sense to speak about the intersection of $\cM$ and $\cM_\ast$ and in fact it turns out that $\cM \cap \cM_\ast = K$.
Applying the complex intpolation method at parameter $\theta \in [0, 1]$ to this diagram yields a Banach space $(\cN, \cN_\ast)_{[\theta]}$. Let $j_p: K \hookrightarrow (\cN, \cN_\ast)_{[1/p]}, p \in [1, \infty)$ be the natural inclusion.   It is proved in \cite{TerpII} that for $p \in [1, \infty)$ we have
\[
(\cN, \cN_\ast)_{[\frac{1}{p}]} \simeq L^{p}(\cN).
\]
Moreover under this isomorphism the mapping $j_1$ is given by sending $\sum_j y_j^\ast z_j$ with $y_j, z_j \in \nphi$ to
\[
\sum_j d_\varphi^{\frac{1}{2}} y_j^\ast \cdot [z_j d_\varphi^{\frac{1}{2}}], \qquad \textrm{ where } \qquad d_\varphi = \frac{d \varphi}{ d\psi}.
 \]
 We simply write $d_\varphi^{\frac{1}{2}}x   d_\varphi^{\frac{1}{2}}$ with $x = \sum_j(y_j^\ast   z_j)$ for this expression as it does not depend on the decomposition of $x$ into summands (see \cite{Lindsay} for similar considerations). For completeness we mention that in  \cite{Izumi} also embeddings of suitable subspaces of $\cN$ into $\cN_\ast$ were considered that are different from $K$. Also we note that, for $x,y \in K$ with $0 \leq x \leq y$ we have
 \begin{equation}\label{Eqn=Comparison}
 \varphi_x^{(-1/2)} \leq \varphi_y^{(-1/2)}, \qquad \textrm{ and } \qquad \varphi_x \leq \Vert x \Vert \varphi.
 \end{equation}
 For $x \in \mphi^+$ we have $\Vert \varphi_x^{(-1/2)} \Vert = \varphi(x)$.

 \begin{rmk}[Standard forms, $L^p$-spaces and the $D_\varphi$-notation] \label{Rmk=Notation}
 In this paper we shall use noncommutative $L^2$-spaces in order to explicitly write down GNS-representations of different weights on {\it the same} Hilbert space. Throughout the paper $\kappa'$ shall be a fixed normal, semi-finite, faithful weight on a von Neumann algebra $\cN'$. (In Section \ref{Sect=Ultra} onwards we may assume that $\kappa'_j$ is a fixed nsf weight on $\cM_j'$ and $\kappa'$ is a fixed nsf weight on $\cM$, with no relation between $\kappa_j'$ and $\kappa'$). We will use $L^p(\cN) = L^p(\cN, \kappa')$.
   For a normal weight $\varphi$ on $\cN$ we write
   \[
   D_\varphi = \frac{d\varphi}{d\kappa'}.
   \]
   The map $\varphi \rightarrow D_\varphi$ is order preserving.   We set,
 \[
{\rm Tr}(D_\varphi) = \varphi(1), \qquad \textrm{ for } \qquad D_\varphi \in L^1(\cN)^+,
 \]
 and extend linearly to $L^1(\cN)$. Suppose that $\varphi$ is normal, semi-finite and faithful. For $\rho := \varphi_x^{(-1/2)}$ with $x \in \mathfrak{m}_{\varphi}^+ \subseteq K$ the associated $D$-operator is given by
  \[
    D_\rho = D_{\varphi}^{\frac{1}{2}} \sqrt{x}  (D_{\varphi}^{\frac{1}{2}} \sqrt{x})^\ast,
  \]
  for which we shall write $D_{\varphi}^{\frac{1}{2}} x  D_{\varphi}^{\frac{1}{2}}$ as explained above, see \cite[Eqn. (38)]{TerpII}.
  For $z \in \nphi$ we have that the closure of $z D_{\varphi}^{\frac{1}{2}}$ is in $L^2(\cN)$. We shall again omit such  closures in the notation as elements of noncommutative $L^2$-spaces are closed by definition.  We also have that $z D_{\varphi} z^\ast = D_{z \varphi z^\ast}$, see \cite[Theorem III.14]{TerpI}. In fact we can extend our notation a bit in the following way. Suppose that $\varphi$ is a normal, semi-finite (not necessarily faithful) weight on $\cN$ and let $x \in \mathfrak{m}_{\varphi}^+$ then we write $\varphi_x^{(-1/2)}$ for the functional
  \[
  \cN \ni y \mapsto \varphi_{p_\varphi x p_\varphi}^{(-1/2)}(p_\varphi y p_\varphi)
   \]
   and note that the latter functional was defined in the corner $p_\varphi \cN p_\varphi$.
  We have that $\varphi_x^{(-1/2)} \leq \varphi_y^{(-1/2)}$ if and only if $p_\varphi x p_\varphi \leq p_\varphi y p_\varphi$.
    Finally recall that \cite[Theorem II.36]{TerpII},
 \[
 (\cN, L^2(\cN), J: x \mapsto x^\ast, L^2(\cN)^+)
 \]
  is a standard form. For the inner product we have $\langle \xi, \eta \rangle = {\rm Tr}(\eta^\ast \xi)$.
 \end{rmk}

\section{Ultraproducts of weights}\label{Sect=Ultra}

Given a series of normal weights $\varphi_j$ on von Neumann algebras $\cM_j$ we define its ultraproduct in a way that generalizes the ultraproduct of normal states. We also develop the spatial theory of ultraproducts. Consequently we are able to prove results on Tomita--Takesaki theory (such as Theorem A in the introduction).

\subsection{Groh-Raynaud ultraproducts}
Let $\omega$ be a non-principal ultrafilter on the natural numbers $\mathbb{N}$. All of the constructions in this section will work for an arbitrary set instead of $\mathbb{N}$, but we take this assumption for simplicity. Another reason for making this convention is that also   the results in \cite{AndoHaagerup} were proved under the same assumption (with again the remark that most constructions go through immediately for arbitrary sets). Indices over $\mathbb{N}$ will typically be denoted by $j$ and therefore we omit $\mathbb{N}$ in the notation from now on.

Suppose that $\mathcal{X}_j$ are Banach spaces. Then consider the space $\mathcal{A}$ of  all series $(x_j)_j$ with $x_j \in \mathcal{X}_j$ and $\sup_j \Vert x_j \Vert_{\mathcal{X}_j} < \infty$. We put a semi-norm on $\mathcal{A}$ by setting $\Vert (x_j)_j \Vert = \lim_{j, \omega} \Vert x_j \Vert_{\mathcal{X}_j}$. Let $\mathcal{A}_0$ be the space of all $x \in \mathcal{A}$ with $\Vert x \Vert = 0$. Then $\Vert \: \cdot \: \Vert$ descends to the quotient space $\mathcal{A} / \mathcal{A}_0$ and the latter space is called the {\it Banach space ultraproduct} which we denote by $\prod_{j, \omega} \mathcal{X}_j$. The equivalence class of a sequence $(x_j)_j$ in $\prod_{j, \omega} \mathcal{X}_j$ will from this point be denoted by $(x_j)_\omega$. This notation is taken from \cite{AndoHaagerup}. Note however that \cite{Raynaud} uses the notation $(x_j)^\bullet$ for $(x_j)_\omega$.

If each $\mathcal{X}_j$ is a Hilbert space then so is $\prod_{j, \omega} \mathcal{X}_j$ in a natural way. If each $\mathcal{X}_j$ is a C$^\ast$-algebra then so is $\prod_{j, \omega} \mathcal{X}_j$. It is not true that if each $\mathcal{X}_j$ is a von Neumann algebra then the Banach space ultraproduct $\prod_{j, \omega} \mathcal{X}_j$ is a von Neumann algebra. Therefore we define ultraproducts of von Neumann algebras differently.

We  recall the {\it Groh-Raynaud} ultraproduct of von Neumann algebras. First note that if $\bA_j$ is a C$^\ast$-algebra acting on a Hilbert space $\cH_j$ then $(x_j)_\omega \in \prod_{j, \omega} \bA_j$ acts naturally on $\prod_{j, \omega} \cH_j$ by $(x_j)_\omega (\xi_j)_\omega = (x_j \xi_j)_\omega$.
\begin{enumerate}
\item For a family of von Neumann algebras $\cM_j$ that are represented on a Hilbert space $\cH_j$ we define the {\it abstract ultraproduct} $\prod_{j, \omega} (\cM_j, \cH_j)$ as the closure in the strong operator topology of the $\ast$-algebra of operators $(x_j)_\omega$ acting on $\prod_{j, \omega} \cH_j$.
\item For a family of von Neumann algebras $\cM_j$ we define the {\it Groh-Raynaud ultraproduct} $\prod_{j, \omega} \cM_j$ as $\prod_{j, \omega} (\cM_j, \cH_j)$ where $\cH_j$ is the standard form Hilbert space.
\end{enumerate}

As shown in \cite[Remark 3.6]{AndoHaagerup} the strong closure in the above definitions is necessary. In this paper if we write $x = (x_j)_\omega \in \cM$ we mean that $x \in \cM$ is an element that can be written as a sequence $(x_j)_\omega$. That is, $x$ is contained in the Banach space ultraproduct of $\cM_j$.

Recall that besides the Groh-Raynaud ultraproduct there are other definitions of ultraproducts (such as the Ocneanu ultraproduct) which will not be used in this paper. A complete and detailed account of different ultraproducts and their relations was given by Ando--Haagerup in \cite{AndoHaagerup}.

It is proved in \cite{Raynaud} that there is a natural pairing between $\prod_{j, \omega} \cM_j$ (ultraproduct of von Neumann algebras) and $\prod_{j, \omega} (\cM_j)_\ast$ (ultraproduct of Banach spaces) determined by  $\langle (\rho_j)_{\omega}, (x_j)_{\omega}\rangle = \lim_{j, \omega} \rho_j(x_j)$ and moreover through this pairing $\prod_{j, \omega} \cM_j = \left( \prod_{j, \omega} (\cM_j)_\ast \right)^\ast$. Recall also \cite{AndoHaagerup} that for the commutants we have $\left(\prod_{j, \omega} \cM_j \right)' = \prod_{j, \omega} \cM_j'$. We use these facts without further reference.

\subsection{Ultraproducts of unbounded operators}\label{Sect=Cone}
In this section we define the ultraproduct of elements in the extended positive cone $(\cM_j)_{{\rm ext}}^+$. We do this using a bounded transform. We define the continuous increasing and operator monotone function
\[
F(s) = \frac{s}{1 + s}, \qquad s \in [0, \infty).
\]
For $x \in \cMext$ with decomposition \eqref{Eqn=Integral} there is a unique $F(x) \in \cM^+$ given by
\[
\langle F(x), \rho \rangle_{\cM, \cM_\ast} = \int_{0}^\infty  F(\lambda) \:d\rho(e_\lambda) + \rho(p).
\]
Conversely for $x \in \cM^+$ with $\Vert x \Vert \leq 1$ and spectral decomposition $x = \int_0^\infty \lambda \:\: de_\lambda$ we set $F^{-1}(x) \in \cMext$ by
\[
\langle F^{-1}(x), \rho \rangle_{\cM, \cM_\ast} = \int_{0}^\infty  F^{-1}(\lambda) \:d\rho(e_\lambda) +  \rho(p) \cdot \infty,
\]
where $p = \int_0^\infty \delta_{1}(\lambda) de_\lambda$, i.e. the projection on the eigenspace of the eigenvalue 1.  As $F$ is operator monotone and strongly continuous it preserves suprema of bounded increasing nets. Now if $\cM_j$ are von Neumann algebras and $x_j \in \cM_{j, {\rm ext}}^+$ then we set its ultrapower by
\begin{equation}\label{Eqn=UltraProductExtPos}
(x_j)_\omega = F^{-1}((F(x_j))_\omega).
\end{equation}
Note that if   $x_j \in \cM_{j, {\rm ext}}^+$ is contained in $\cM^+$ and its uniform norm converges to 0 then \eqref{Eqn=UltraProductExtPos} equals zero so that our notation $(x_j)_\omega$ is unambiguous.

We need some results on spectral calculus on ultraproducts. Firstly for a continuous function $f$ and $x = (x_j)_\omega \in \cM$ we have
\[
f((x_j)_\omega) = (f(x_j))_\omega, \qquad (x_j)_\omega \in \cM.
\]
Indeed this can be seen by approximating $f$ with polynomials. In the unbounded situation we need two lemmas.

\begin{lem}\label{Lem=FunctionaCalculus}
Let $f: [0, \infty) \rightarrow [0, \infty)$ be a continuous function for which $f(t)$ has a limit in $[0, \infty]$ as $t \rightarrow \infty$. Let $x_j \in (\cM_j)_{{\rm ext}}^+$. Then $f( (x_j)_\omega ) = (f(x_j))_\omega$.
\end{lem}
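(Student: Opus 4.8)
The plan is to reduce everything to the bounded functional calculus identity $g((y_j)_\omega) = (g(y_j))_\omega$ for $(y_j)_\omega \in \cM$ and $g$ continuous, which is recorded just before the lemma, by transporting the unbounded calculus through the bounded transform $F$. First I would record the elementary properties of $F$: viewed as a map $[0,\infty] \to [0,1]$ it is a homeomorphism with $F(\infty) = 1$ and inverse $F^{-1}(u) = u/(1-u)$, $F^{-1}(1) = \infty$, so that, on operators, $F$ and $F^{-1}$ are mutually inverse bijections between $\cNext$ and $\{ z \in \cN^+ \mid \Vert z \Vert \leq 1\}$ for any von Neumann algebra $\cN$. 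Using the limit hypothesis on $f$, I define
\[
G = \overline{F \circ f \circ F^{-1}} : [0,1] \to [0,1], \qquad G(u) = F(f(F^{-1}(u))) \ (u < 1), \quad G(1) = F(f(\infty)).
\]
The role of the hypothesis that $f(t)$ has a limit in $[0,\infty]$ as $t \to \infty$ is precisely that it makes $G$ continuous on the whole compact interval $[0,1]$ (when $f(\infty) = \infty$ one has $G(1) = F(\infty) = 1$).

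The key step is the pointwise identity: for every von Neumann algebra $\cN$ and every $y \in \cNext$,
\[
F(f(y)) = G(F(y)),
\]
where $f(y) \in \cNext$ is defined through the spectral resolution \eqref{Eqn=Integral} by $\langle f(y), \rho \rangle = \int_0^\infty f(\lambda)\, d\rho(e_\lambda) + f(\infty)\rho(p)$. I would verify this by comparing both sides against the spectral resolution of $y$: on the bounded part both reduce to $\int_0^\infty F(f(\lambda))\, d\rho(e_\lambda)$, since $G(F(\lambda)) = F(f(\lambda))$ by construction; on the infinite-part projection $p$ one uses that $F(y)$ takes the value $F(\infty) = 1$ there, so $G(F(y))$ contributes $G(1) = F(f(\infty))$, which is exactly the value contributed by $F(f(y))$. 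The matching of this infinite part is the one place where the hypothesis on $f$ genuinely enters, and I expect it to be the main technical point to get right.

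Finally I would chase the definitions. Applying the identity to each $x_j \in (\cM_j)_{{\rm ext}}^+$ gives $F(f(x_j)) = G(F(x_j))$, hence $(F(f(x_j)))_\omega = (G(F(x_j)))_\omega$; since $(F(x_j))_\omega \in \cM$ is a bounded ultraproduct of norm $\leq 1$ and $G$ is continuous, the bounded functional calculus identity yields $(G(F(x_j)))_\omega = G((F(x_j))_\omega)$. On the other hand, by the definition \eqref{Eqn=UltraProductExtPos} of the ultraproduct in the extended cone we have $F((x_j)_\omega) = (F(x_j))_\omega$, and applying the key identity once more, now over $\cN = \cM$ to $y = (x_j)_\omega$, gives $F(f((x_j)_\omega)) = G(F((x_j)_\omega)) = G((F(x_j))_\omega)$. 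Since also $F((f(x_j))_\omega) = (F(f(x_j)))_\omega = (G(F(x_j)))_\omega$ by \eqref{Eqn=UltraProductExtPos}, we conclude $F(f((x_j)_\omega)) = F((f(x_j))_\omega)$, and applying the bijection $F^{-1}$ gives $f((x_j)_\omega) = (f(x_j))_\omega$. The obstacle I am deliberately avoiding is the interchange of an increasing supremum with the ultraproduct, which fails for general monotone sequences; routing everything through $F$ circumvents it, because the relevant calculus then takes place on the compact interval $[0,1]$ with the continuous function $G$ rather than through an unbounded monotone limit.
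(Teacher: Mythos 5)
Your proposal is correct and follows essentially the same route as the paper: the paper's proof also observes that the limit hypothesis lets one view $F\circ f\circ F^{-1}$ as a continuous function $[0,1]\to[0,1]$ and then computes $f((x_j)_\omega)=F^{-1}\bigl(((F\circ f\circ F^{-1})\circ F(x_j))_\omega\bigr)=(f(x_j))_\omega$ using the bounded functional calculus identity and the definition \eqref{Eqn=UltraProductExtPos}. Your explicit verification of the pointwise identity $F(f(y))=G(F(y))$ on the infinite part of the spectral resolution is a detail the paper leaves implicit, but it is the same argument.
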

\begin{proof}
As the limit $f(t), t \rightarrow \infty$ exists we may view $F \circ f \circ F^{-1}$ as a continuous function $[0,1] \rightarrow [0,1]$. Then we have,
\[
\begin{split}
& f( (x_j)_\omega )
=   f( F^{-1} (  ( F(x_j)  )_\omega )  )
= F^{-1} \circ (F \circ f \circ F^{-1})  (  F(x_j)_\omega ) \\
= & F^{-1}    ( ( (F \circ f \circ F^{-1}) \circ F(x_j))_\omega )  =  F^{-1}    ( (F(f(x_j)))_\omega ) = (f(x_j))_\omega.
\end{split}
\]
\end{proof}

The statement of the previous lemma does not hold for the function $f(t) = e^{it}$, see \cite[Example 4.7]{AndoHaagerup}. However if we restrict to a suitable spectral subspace we get the following lemma. We use $\chi_A$ for the indicator function on a set $A$.

\begin{lem}[Lemma 4.4 of \cite{AndoHaagerup}]\label{Thm=SpectralStuff}
Let $x_j$ be positive self-adjoint operators affiliated with a von Neumann algebra $\cM_j$.  Let $x = (x_j)_\omega \in \cMext$ be its ultrapower. Let $\mathcal{K} \subseteq \mathcal{H}$ be the space given by $\cup_{\lambda> 0}\chi_{(\frac{1}{\lambda}, \lambda)}(x) \cH$. Then for every $t \in \mathbb{R}$,
\[
(x\vert_{\mathcal{K}})^{it} = (x_j^{it})_\omega \vert_{\mathcal{K}}.
\]
\end{lem}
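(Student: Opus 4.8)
The plan is to reduce everything to the \emph{bounded} continuous functional calculus, which by the identity recorded just before Lemma \ref{Lem=FunctionaCalculus} commutes with the Groh--Raynaud ultraproduct. The only genuine obstruction is that $s \mapsto s^{it}$ is discontinuous both at $s=0$ and at $s=\infty$; this is exactly why the unrestricted identity fails (Example 4.7 of \cite{AndoHaagerup}), and it is what forces the passage to $\cK$. The role of $\cK = \bigcup_{\lambda>0}\chi_{(1/\lambda,\lambda)}(x)\cH$ is precisely to excise the spectrum of $x$ near $0$ and $\infty$, so that on each piece $s\mapsto s^{it}$ may be replaced by an honestly continuous, bounded function.

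First I would record the following extension of Lemma \ref{Lem=FunctionaCalculus}: if $g : [0,\infty) \to \mathbb{C}$ is bounded, continuous, and has a limit at $\infty$, then $g((x_j)_\omega) = (g(x_j))_\omega$. Put $y_j = F(x_j) \in \cM_j^+$, so that by \eqref{Eqn=UltraProductExtPos} the bounded element $y := (y_j)_\omega = F(x)$ has $\Vert y \Vert \leq 1$, and $g(x_j) = (g\circ F^{-1})(y_j)$. Since $g$ has a limit at $\infty$, the function $g\circ F^{-1}$ extends to a continuous (complex-valued) function on $[0,1] \supseteq \mathrm{sp}(y_j)$. The commutation of bounded continuous functional calculus with the ultraproduct (proved before Lemma \ref{Lem=FunctionaCalculus} by approximating with polynomials; this works verbatim for complex-valued $g\circ F^{-1}$ upon approximating by complex polynomials, using that the ultraproduct action is a $\ast$-homomorphism) then gives
\[
(g(x_j))_\omega = \bigl((g\circ F^{-1})(y_j)\bigr)_\omega = (g\circ F^{-1})\bigl((y_j)_\omega\bigr) = (g\circ F^{-1})(y) = g(F^{-1}(y)) = g(x).
\]

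Next, fix $\lambda > 1$ and set $e_\lambda = \chi_{(1/\lambda,\lambda)}(x)$. Choose a bounded continuous $g_\lambda : [0,\infty)\to\mathbb{C}$ which has a limit at $\infty$ and satisfies $g_\lambda(s) = s^{it}$ for $s\in[1/\lambda,\lambda]$ (keep $g_\lambda(s)=s^{it}$ there and make it constant outside $[1/(2\lambda),2\lambda]$). Write $f_\lambda(s) = s^{it} - g_\lambda(s)$, a bounded continuous function vanishing on $[1/\lambda,\lambda]$. By the extension above, $(g_\lambda(x_j))_\omega = g_\lambda(x)$ and $(f_\lambda(x_j))_\omega = f_\lambda(x)$; since $x_j^{it} - g_\lambda(x_j) = f_\lambda(x_j)$ and the ultraproduct is linear, $(x_j^{it})_\omega - g_\lambda(x) = f_\lambda(x)$. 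Because $e_\lambda$ is a spectral projection of $x$ and $f_\lambda$ vanishes on $[1/\lambda,\lambda]\supseteq(1/\lambda,\lambda)$, the Borel calculus yields $f_\lambda(x)\,e_\lambda = 0$, while $g_\lambda(x)\,e_\lambda = x^{it}\,e_\lambda$ (the functions agree on the support of $e_\lambda$, where $x$ is bounded away from $0$). Hence for every $\xi\in e_\lambda\cH$ one gets $(x_j^{it})_\omega\,\xi = g_\lambda(x)\,\xi = x^{it}\,\xi$, and in particular $(x_j^{it})_\omega$ maps $e_\lambda\cH$ into itself.

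Finally, letting $\lambda\to\infty$ the subspaces $e_\lambda\cH$ increase to $\cK$, and the identity just established shows that $(x_j^{it})_\omega$ and $(x\vert_\cK)^{it}$ agree on each $e_\lambda\cH$, hence on all of $\cK$. The main obstacle is therefore not the bookkeeping but the behaviour of $s\mapsto s^{it}$ at $0$ and $\infty$: once these are removed by restricting to $\cK$, the whole argument comes down to selecting the continuous cutoff $g_\lambda$ and invoking the bounded functional calculus, so I expect no further difficulty.
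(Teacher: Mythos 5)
The paper does not prove this lemma itself --- it is quoted as Lemma 4.4 of \cite{AndoHaagerup} --- so your proposal can only be measured against the standard argument. Your first step (the complex-valued extension of Lemma \ref{Lem=FunctionaCalculus} to bounded continuous $g$ with a limit at $\infty$, via the bounded transform and polynomial approximation) is fine, and your identification of the discontinuity of $s\mapsto s^{it}$ at $0$ and $\infty$ as the real issue is exactly right. The problem is that your second step then ignores this diagnosis: you apply the extension to $f_\lambda(s)=s^{it}-g_\lambda(s)$, but $f_\lambda$ is precisely as badly behaved as $s^{it}$ itself --- since $g_\lambda$ is constant near $0$ and near $\infty$, $f_\lambda$ has no limit at $0$ and no limit at $\infty$ --- so neither Lemma \ref{Lem=FunctionaCalculus} nor your extension justifies $(f_\lambda(x_j))_\omega=f_\lambda(x)$. (For $x\in\cMext$ with nonzero infinite part, $f_\lambda(x)$ is not even defined.) Worse, if that identity did hold, then adding it to $(g_\lambda(x_j))_\omega=g_\lambda(x)$ would give the \emph{unrestricted} equality $(x_j^{it})_\omega=x^{it}$, which is exactly what Example 4.7 of \cite{AndoHaagerup} refutes. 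The vanishing of $f_\lambda$ on $[1/\lambda,\lambda]$ is of no help here, because the obstruction to commuting the functional calculus with the ultraproduct lives at $0$ and $\infty$, where $f_\lambda$ is not tame.

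The fix is to multiply by the cutoff rather than subtract it. Take $h_\lambda:[0,\infty)\to[0,1]$ continuous, compactly supported in $(0,\infty)$, with $h_\lambda=1$ on $[1/\lambda,\lambda]$. Then $s\mapsto s^{it}h_\lambda(s)$ \emph{is} continuous on $[0,\infty)$ with limit $0$ at $\infty$, so your extension gives $(x_j^{it}h_\lambda(x_j))_\omega=(s^{it}h_\lambda)(x)=x^{it}h_\lambda(x)$, and the real-valued Lemma \ref{Lem=FunctionaCalculus} gives $(h_\lambda(x_j))_\omega=h_\lambda(x)$. For $\xi\in\chi_{(1/\lambda,\lambda)}(x)\cH$ one has $h_\lambda(x)\xi=\xi$, hence
\[
(x_j^{it})_\omega\,\xi=(x_j^{it})_\omega\,(h_\lambda(x_j))_\omega\,\xi=(x_j^{it}h_\lambda(x_j))_\omega\,\xi=x^{it}h_\lambda(x)\,\xi=x^{it}\xi,
\]
using multiplicativity of the ultraproduct on bounded sequences. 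Letting $\lambda\to\infty$ then gives the statement on $\cK$, together with the invariance of $\cK$ under $(x_j^{it})_\omega$. Your overall architecture (bounded transform, cutoff near $0$ and $\infty$, exhaustion of $\cK$) is the right one; only the subtraction step needs to be replaced by this multiplication step.
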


For an operator $A \in \cB(\cH)_{{\rm ext}}^+$ we set $P_{\infty}(A)$ for the projection   $p$ defined in the spectral resolution \eqref{Eqn=Integral}. We also set $P_{<\infty}(A) = 1-p$. The next lemma shows that we may compute values of ultraproducts in a natural way.

\begin{lem}\label{Lem=UltraOfElements}
Let $A_j$ be (unbounded) positive self-adjoint operators acting on a Hilbert space $\cH_j$ with domain $\Dom(A_j)$. Let $A:=(A_j)_\omega$ be its ultraproduct which is contained in $\cB(\cH)_{{\rm ext}}^+$ with $\cH = \prod_{j, \omega} \cH_j$. Let $\xi_j \in \Dom(A_j)$ and suppose that $\Vert \xi_j \Vert_2$ is bounded so that we may put $\xi = (\xi_j)_\omega \in \cH$. We have,
\begin{equation}\label{Eqn=SubtleChange}
\left< A, \omega_{\xi,\xi} \right> \leq \lim_{j, \omega} \Vert A_j^{\frac{1}{2}} \xi_j \Vert_2^2.
\end{equation}
Moreover, let $\xi \in P_{<\infty}(A) \cH$ be such that $\sup_j \Vert   A_j^{\frac{1}{2}} \xi_j \Vert_2^2$ is finite. Then  we get an equality,
\begin{equation}\label{Eqn=PIsNeeded}
  A^{\frac{1}{2}} (\xi_j)_\omega =  P_{< \infty}(A) (A_j^{\frac{1}{2}} \xi_j)_\omega.
\end{equation}
\end{lem}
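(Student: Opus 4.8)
The plan is to prove the two assertions separately, in each case reducing to ordinary spectral calculus for the single operator $A$ by the bounded-transform/truncation technique combined with Lemma \ref{Lem=FunctionaCalculus}.

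For the inequality \eqref{Eqn=SubtleChange} I would first introduce the truncations $f_\lambda(s) = \min(s,\lambda)$ for $\lambda > 0$. Each $f_\lambda$ is continuous, nonnegative and has the finite limit $\lambda$ at infinity, so Lemma \ref{Lem=FunctionaCalculus} gives $f_\lambda(A) = (f_\lambda(A_j))_\omega \in \cM^+$, a bounded operator acting componentwise. Since $f_\lambda(A_j) \leq A_j$ in the form sense and $\xi_j \in \Dom(A_j) \subseteq \Dom(A_j^{1/2})$, one has $\langle f_\lambda(A_j)\xi_j, \xi_j \rangle \leq \langle A_j \xi_j, \xi_j \rangle = \Vert A_j^{1/2}\xi_j \Vert^2$, whence $\langle f_\lambda(A), \omega_{\xi,\xi} \rangle = \lim_{j,\omega} \langle f_\lambda(A_j)\xi_j, \xi_j \rangle \leq \lim_{j,\omega} \Vert A_j^{1/2}\xi_j \Vert^2$. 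As $f_\lambda(s) \nearrow s$, the monotone-convergence form of the spectral resolution \eqref{Eqn=Integral} gives $\langle A, \omega_{\xi,\xi} \rangle = \sup_\lambda \langle f_\lambda(A), \omega_{\xi,\xi} \rangle$, and taking the supremum over $\lambda$ in the previous display yields \eqref{Eqn=SubtleChange}.

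For the equality \eqref{Eqn=PIsNeeded} I first observe that \eqref{Eqn=SubtleChange} under the standing hypotheses gives $\langle A, \omega_{\xi,\xi} \rangle \leq \sup_j \Vert A_j^{1/2}\xi_j \Vert^2 < \infty$; since $\xi \in P_{<\infty}(A)\cH$ annihilates the infinite part of \eqref{Eqn=Integral}, this forces $\xi \in \Dom(A^{1/2})$, so $A^{1/2}\xi$ is a genuine vector in $P_{<\infty}(A)\cH$. The bound $\sup_j \Vert A_j^{1/2}\xi_j \Vert < \infty$ also makes $\eta := (A_j^{1/2}\xi_j)_\omega \in \cH$ well defined. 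The bridge between the two sides is the pair of bounded functions $r_\epsilon(s) = (1+\epsilon s)^{-1}$ and $\phi_\epsilon(s) = s^{1/2}(1+\epsilon s)^{-1}$, $\epsilon > 0$, both continuous and vanishing at infinity. By Lemma \ref{Lem=FunctionaCalculus} they satisfy $r_\epsilon(A) = (r_\epsilon(A_j))_\omega$ and $\phi_\epsilon(A) = (\phi_\epsilon(A_j))_\omega$, and since $\Vert r_\epsilon(A_j) \Vert \leq 1$ and $\Vert \phi_\epsilon(A_j) \Vert \leq \tfrac{1}{2}\epsilon^{-1/2}$ are bounded uniformly in $j$, these are Groh--Raynaud ultraproduct operators acting componentwise on $\cH$. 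The key computation is then the identity, valid for each $\epsilon > 0$,
\begin{equation*}
\phi_\epsilon(A)\xi = (\phi_\epsilon(A_j)\xi_j)_\omega = \big( (1+\epsilon A_j)^{-1} A_j^{1/2}\xi_j \big)_\omega = r_\epsilon(A)\eta,
\end{equation*}
using that $\phi_\epsilon(A_j) = (1+\epsilon A_j)^{-1} A_j^{1/2}$ as commuting functions of $A_j$.

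It then remains to let $\epsilon \to 0$ on both sides. Since $\phi_\epsilon(\infty) = 0$ one has $\phi_\epsilon(A) = \phi_\epsilon(A)P_{<\infty}(A)$, and on $\Dom(A^{1/2}) \cap P_{<\infty}(A)\cH$ the estimate $|\phi_\epsilon(s) - s^{1/2}|^2 \leq s$ together with dominated convergence in the spectral measure of $A$ gives $\phi_\epsilon(A)\xi \to A^{1/2}\xi$. Likewise $r_\epsilon(\infty) = 0$ gives $r_\epsilon(A) = r_\epsilon(A)P_{<\infty}(A)$, and $r_\epsilon(s) \nearrow 1$ boundedly on $[0,\infty)$ yields $r_\epsilon(A)\eta \to P_{<\infty}(A)\eta$ strongly; equating the two limits produces \eqref{Eqn=PIsNeeded}. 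I expect the main obstacle to be the bookkeeping around the infinite part of $A$: the projection $P_{<\infty}(A)$ on the right of \eqref{Eqn=PIsNeeded} is forced precisely because $\phi_\epsilon$ and $r_\epsilon$ vanish at infinity (this is also why Lemma \ref{Lem=FunctionaCalculus}, rather than naive functional calculus, is unavoidable here, cf. \cite[Example 4.7]{AndoHaagerup}), and some care is needed to verify that $\xi$ indeed lands in $\Dom(A^{1/2})$ and that both $\epsilon$-limits are taken in the correct strong sense on the finite spectral subspace.
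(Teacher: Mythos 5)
Your proof is correct and follows essentially the same route as the paper: reduce to bounded continuous functional calculus via Lemma \ref{Lem=FunctionaCalculus}, which commutes with ultraproducts, and then pass to the limit using the spectral resolution \eqref{Eqn=Integral}. The only differences are cosmetic --- your truncation $\min(s,\lambda)$ handles the infinite part of $A$ uniformly where the paper splits $\xi$ into its finite and infinite components, and your resolvent regularization $r_\epsilon, \phi_\epsilon$ plays the role of the paper's compactly supported cutoffs $G_\lambda$ in the second half.
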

\begin{proof}
Let $G_\lambda$ be a continuous function supported on $[0, 2 \lambda]$ such that $0 \leq G_\lambda \leq 1$ and $G_\lambda(s) = 1$ for all $s \in [0, \lambda]$. The projection $P_{<\infty}(A)$ is given by the supremum of the projections $G_\lambda(A), \lambda \geq 0$.  Write $\xi_1 = P_{< \infty}(A) \xi, \xi_2 = (1- P_{< \infty}(A)) \xi$. Then, writing $\xi_1 = (\xi_{1,j})_\omega$,
\begin{equation}\label{Eqn=SubtleComp}
\begin{split}
& \langle A, \rho_{\xi_1,\xi_1} \rangle =
\sup_{\lambda > 0} \: \langle A, \rho_{G_\lambda(A) \xi_1, G_\lambda(A)\xi_1} \rangle
=   \sup_{\lambda>0} \:  \langle (A_j  G_\lambda(A_j))_\omega, \rho_{\xi_1, \xi_1} \rangle \\
= &
\sup_{\lambda>0} \:  \lim_{j, \omega} \Vert  A_j^{\frac{1}{2}}   G_\lambda(A_j)  \xi_{1,j} \Vert_2^2
\leq   \lim_{j, \omega} \Vert  A_j^{\frac{1}{2}}    \xi_{1,j} \Vert_2^2.
\end{split}
\end{equation}
This shows \eqref{Eqn=SubtleChange} in case $\xi_2 = 0$.
If $\xi_2 \not = 0$ then write $\xi_2 = (\xi_{2, j})_\omega$. There exists $0 < \epsilon < \Vert \xi_2 \Vert$ such that for every $\lambda > 0$ there exists $U \in \omega$ such that for all $j \in U$ we have $\Vert \chi_{(\lambda, \infty)}(A_j) \xi_{2, j} \Vert > \Vert \xi_2 \Vert - \epsilon$.  But then clearly  $\lim_{j, \omega} \Vert A_j^{\frac{1}{2}} \xi_j \Vert_2^2 = \infty$. The inequality  \eqref{Eqn=SubtleChange} then follows.

It remains to prove the final statement.  Take $\xi = (\xi_j)_\omega \in P_{<\infty}(A) \cH$, suppose that $\sup_j \Vert   A_j^{\frac{1}{2}} \xi_j \Vert_2^2$ is finite. Firstly note that $A$ acts on $P_{< \infty}(A) \cH$ as an unbounded operator and that the assumptions ensure that $\xi$ is in the domain of $A^{\frac{1}{2}}$ by \eqref{Eqn=SubtleChange}. Note that (as in \eqref{Eqn=SubtleComp}) we get again,
\[
\begin{split}
&  P_{< \infty}(A) (A_j^{\frac{1}{2}} \xi_j)_\omega = \lim_{\lambda \rightarrow \infty} G_\lambda(A) (A_j^{\frac{1}{2}} \xi_j)_\omega \\
= & \lim_{\lambda \rightarrow \infty} ( G_\lambda(A_j) A_j^{\frac{1}{2}} \xi_j)_\omega = \lim_{\lambda \rightarrow \infty} G_\lambda(A)A^{\frac{1}{2}} (\xi_j)_\omega = A^{\frac{1}{2}} (\xi_j)_\omega.
\end{split}
\]
 This concludes the proof.
\end{proof}

Note that in \eqref{Eqn=PIsNeeded} the projection $P_{<\infty}(A)$ is necessary. For example, let $\cH = L^2(\mathbb{R})$. Take $\xi_j = j^{-1/2} \chi_{[j, j+1]}$ and let $A_j$ be the multiplication operator with the identify function $s \mapsto s$. Then $(\xi_j)_\omega = 0$ whereas $(A_j \xi_j)_\omega \not = 0$.

\subsection{Ultraproducts of weights}

For two normal weights $\rho$ and $\varphi$ on a von Neumann algebra $\cM$ we say that $\rho \leq \varphi$ if $\rho(x) \leq \varphi(x)$ for every $x \in \cM^+$. For an increasing net $(\rho_k)_k$ of normal weights we say that $\varphi = \sup_k \rho_k$ if $\varphi$ is the smallest weight majorizing each $\rho_k$.  In fact for an increasing net $(\rho_k)_k$ with $\rho_k \in \cM_\ast$  we may  define a normal weight $\varphi$ as,
\[
\varphi(x) = \sup_k \langle \rho_k, x \rangle_{\cM_\ast, \cM}, \qquad x \in \cM^+.
\]
Note that $\varphi$ is  not necessarily semi-finite. If the net $(\rho_k)_k$ were to be bounded then $\varphi \in \cM_\ast$ and the convergence $\rho_k \rightarrow \varphi$ is in  norm (as $\Vert \rho_k - \varphi \Vert = \varphi(1) - \rho_k(1) \rightarrow 0$).

Now we recall the following theorem, which is a strengthening of \cite[Theorem VII.1.11 (iv)]{TakII}. We give a proof here.

\begin{thm}\label{Thm=NetStructure}
Let $\varphi$ be a normal, semi-finite, faithful weight on a von Neumann algebra $\cN$. Consider the set
\[
\mathcal{F}_\varphi = \left\{ \rho \in \cN_\ast \mid \rho \leq \varphi \right\}.
\]
Then $\varphi = \sup_{ \rho \in \mathcal{F}_\varphi}  \rho$.
\end{thm}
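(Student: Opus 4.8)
The plan is to verify the two halves of the asserted equality separately, using the paper's convention that $\sup_{\rho\in\mathcal{F}_\varphi}\rho$ denotes the smallest weight majorizing every $\rho\in\mathcal{F}_\varphi$. First I would observe that $\varphi$ is itself an upper bound for the family: every $\rho\in\mathcal{F}_\varphi$ satisfies $\rho\le\varphi$ by definition, and the constraint $\rho\le\varphi$ together with $\varphi\ge 0$ forces $\rho$ to be positive on $\cN^+$, so in fact $\mathcal{F}_\varphi\subseteq\cN_\ast^+$. It therefore remains to show that $\varphi$ is the \emph{least} upper bound, and for this it suffices to establish the pointwise identity $\sup_{\rho\in\mathcal{F}_\varphi}\rho(x)=\varphi(x)$ for every $x\in\cN^+$: any weight $\psi$ majorizing all of $\mathcal{F}_\varphi$ then satisfies $\psi(x)\ge\sup_{\rho}\rho(x)=\varphi(x)$, whence $\psi\ge\varphi$.

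The inequality $\sup_{\rho\in\mathcal{F}_\varphi}\rho(x)\le\varphi(x)$ is immediate from $\rho\le\varphi$. For the reverse inequality I would invoke the structure theorem for normal weights (the content of \cite[Theorem VII.1.11]{TakII}, originally due to Haagerup): since $\varphi$ is normal it admits a decomposition $\varphi=\sum_{i\in I}\omega_i$ into normal positive functionals $\omega_i\in\cN_\ast^+$, meaning $\varphi(x)=\sum_{i\in I}\omega_i(x)$ for all $x\in\cN^+$. For each finite subset $F\subseteq I$ the partial sum $\rho_F=\sum_{i\in F}\omega_i$ is a normal positive functional with $\rho_F\le\varphi$, so $\rho_F\in\mathcal{F}_\varphi$; moreover the net $(\rho_F)_F$ increases to $\varphi$ pointwise, since $\varphi(x)=\sup_{F}\sum_{i\in F}\omega_i(x)$. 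Consequently $\sup_{\rho\in\mathcal{F}_\varphi}\rho(x)\ge\sup_{F}\rho_F(x)=\varphi(x)$, which closes the argument.

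The only genuine input here is the decomposition of a normal weight into a sum of normal functionals; everything else is directedness and approximation bookkeeping. The main obstacle, and the place where normality (equivalently $\sigma$-weak lower semicontinuity) is indispensable, is precisely guaranteeing that $\mathcal{F}_\varphi$ contains \emph{enough} functionals to approximate $\varphi(x)$ from below. An alternative, more self-contained route would bypass the decomposition and argue by Hahn--Banach and bipolarity in the predual, using that a $\sigma$-weakly lower semicontinuous, positively homogeneous convex functional on $\cN^+$ is the supremum of the normal positive functionals it dominates; the delicate point in that approach is checking that the separating functionals can be chosen both normal and positive, which again reduces to the lower semicontinuity of $\varphi$. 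Finally, I note that the semifiniteness and faithfulness hypotheses are not actually needed for this statement and are present only because they are standing assumptions in the surrounding discussion.
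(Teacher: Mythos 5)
Your argument correctly establishes the pointwise identity $\varphi(x)=\sup_{\rho\in\mathcal{F}_\varphi}\rho(x)$ for $x\in\cN^+$, and hence that $\varphi$ is the least weight majorizing $\mathcal{F}_\varphi$. But that much is exactly the classical content of \cite[Theorem VII.1.11]{TakII}, and the paper states explicitly that the present theorem is a \emph{strengthening} of it: the point of the proof, announced in its first line, is that $\mathcal{F}_\varphi$ is upward directed (``a net in $\cN_\ast$''). Note that in the paragraph preceding the theorem the supremum of a family of functionals is only defined for an \emph{increasing net}, so the statement does not even parse without directedness. Your increasing family of partial sums $(\rho_F)_F$ coming from Haagerup's decomposition $\varphi=\sum_i\omega_i$ is merely one increasing subfamily of $\mathcal{F}_\varphi$; it says nothing about whether two arbitrary elements $\rho_1,\rho_2\in\mathcal{F}_\varphi$ admit a common majorant $\rho_3\in\cN_\ast$ with $\rho_3\le\varphi$, and this is genuinely nontrivial since the obvious candidate $\rho_1+\rho_2$ need not be dominated by $\varphi$. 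The directedness is also precisely what the rest of the paper uses: Definition \ref{Dfn=Ultraproduct} needs its family $\mathcal{F}$ to be directed so that the pointwise supremum is additive and hence a normal weight, and Lemmas \ref{Lem=TakingTheSup}, \ref{Lem=OpenLemma} and \ref{Lem=AnotherApproximation} take limits over the net $\mathcal{F}_\varphi$ or invoke ``the same proof as in Theorem \ref{Thm=NetStructure}'' to produce nets.

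To close the gap you would need the construction that occupies essentially all of the paper's proof: writing $x_\rho=[D_\varphi^{-1/2}D_\rho D_\varphi^{-1/2}]\in\cN$ with $\|x_\rho\|\le 1$ and $\varphi^{(-1/2)}_{x_\rho}=\rho$, one uses the operator monotone and operator concave function $F(s)=s(1+s)^{-1}$ to form $x=F\bigl(F^{-1}(x_{\rho_1})+F^{-1}(x_{\rho_2})\bigr)$; operator monotonicity gives $x_{\rho_1},x_{\rho_2}\le x\le 1$, whence $\rho_i\le\varphi^{(-1/2)}_{x}\le\varphi$, while operator concavity gives $x\le 2(x_{\rho_1}+x_{\rho_2})$, which guarantees $x\in K$ so that $\varphi^{(-1/2)}_{x}$ is a genuine element of $\cN_\ast$. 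Some argument of this kind (or another construction of a dominated common majorant) is indispensable. Your closing remark that semi-finiteness and faithfulness are superfluous is accurate for the pointwise identity you prove, but faithfulness at least enters the paper's construction through the inverse $D_\varphi^{-1/2}$.
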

\begin{proof}
We need to prove that $\mathcal{F}_\varphi$ is a net in $\cN_\ast$ (note that this is stronger than \cite[Theorem VII.1.11]{TakII}). Take $\rho \in \cN_\ast$ with $\rho \leq \varphi$. Let $D_\rho$ and $D_\varphi$ be the associated densities. As $D_\rho \leq D_\varphi$ we have $D_\varphi^{-\frac{1}{2}} D_\rho D_\varphi^{-\frac{1}{2}} \leq 1$. In fact $x_{\rho} := [D_\varphi^{-\frac{1}{2}} D_\rho D_\varphi^{-\frac{1}{2}}] \in \cN$ as $x_\rho$ is $0$-homogeneous. We claim moreover that $x \in K$ (see Section \ref{Sect=LpPrelim}). Indeed, take $y,z \in \nphi$. As,
\[
\begin{split}
[z D_\varphi^{\frac{1}{2}}] \cdot [D_\varphi^{-\frac{1}{2}}   D_\rho  D_\varphi^{-\frac{1}{2}} ] \cdot  D_\varphi^{\frac{1}{2}} y^\ast
\supseteq  z D_\varphi^{\frac{1}{2}} D_\varphi^{-\frac{1}{2}}   D_\rho  D_\varphi^{-\frac{1}{2}}    D_\varphi^{\frac{1}{2}} y^\ast
\supseteq z D_\rho y^\ast,
\end{split}
\]
we have that $[z D_\varphi^{\frac{1}{2}}] [D_\varphi^{-\frac{1}{2}}   D_\rho  D_\varphi^{-\frac{1}{2}} ]   D_\varphi^{\frac{1}{2}} y^\ast \supseteq z  \cdot D_\rho  \cdot y^\ast$ and as $z   \cdot D_\rho  \cdot y^\ast \in L^1(\cN)$ the inclusion is actually an equality by \cite[Theorem 4]{Hilsum}. Then,
\begin{equation}\label{Eqn=ReverseThing}
\begin{split}
 & y^\ast z \mapsto \langle J x_\rho^\ast J \Lambda(z), \Lambda(y) \rangle
=   \langle x_\rho J \Lambda(y), J \Lambda(z) \rangle
=  {\rm Tr}( [z D_\varphi^{\frac{1}{2}}]\cdot x_\rho \cdot D_\varphi^{\frac{1}{2}} y^\ast )\\
= & {\rm Tr}( [z D_\varphi^{\frac{1}{2}}] \cdot [D_\varphi^{-\frac{1}{2}}   D_\rho  D_\varphi^{-\frac{1}{2}} ]    \cdot D_\varphi^{\frac{1}{2}} y^\ast )
= {\rm Tr}( z \cdot D_\rho  \cdot y^\ast ) = \rho(y^\ast z),
\end{split}
\end{equation}
extends boundedly to the normal functional $\varphi_{x_\rho}^{(-1/2)} = \rho$, so that $x_\rho \in K$.
 Now let $\rho_1, \rho_2 \in \mathcal{F}_\varphi$. Let $x_{\rho_1}$ and $x_{\rho_2}$ be the associated operators as before. The function $F(s) = s (1+s)^{-1}$ is operator monotone and operator concave. We let $x \in \cN^+$ be such that  $F^{-1}(x) = F^{-1}(x_{\rho_1}) +  F^{-1}(x_{\rho_2})$  (note that this equality takes values in $\cN^+_{{\rm ext}}$ on which we have well-defined spectral calculus).  We have $x_{\rho_1} \leq x, x_{\rho_2} \leq x$ by operator monotonicity of $F$ and further $x \leq 1$. Furthermore as $F$ is operator concave and $F(2F^{-1}(s)) = 2s(1+s)^{-1} \leq 2, s \in \mathbb{R}$, we have,
  \[
  \begin{split}
  x = & F(  F^{-1}(x_{\rho_1}) +  F^{-1}(x_{\rho_2})  )
    = F\left( \frac{1}{2}(  2F^{-1}(x_{\rho_1}) +  2F^{-1}(x_{\rho_2})  )  \right)\\
    \leq  &  F(   2F^{-1}(x_{\rho_1}) ) +  F( 2 F^{-1}(x_{\rho_2})  )
    \leq 2 (x_{\rho_1} + x_{\rho_2}).
\end{split}
\]
So $x \in K$.
 Therefore for $i= 1,2$ we get $\varphi^{(-1/2)}_{ x_{\rho_i}} \leq  \varphi^{(-1/2)}_{x} \leq \varphi$. This  shows that $\mathcal{F}_\varphi$ is a net. Taking an  net $e_j \in \mathfrak{m}_\varphi^+, e_j \leq 1$ converging to $1$ strongly and such that $\Vert e_j \Vert \leq 1$ gives that $\varphi_{e_j}^{(-1/2)}  \nearrow\varphi$. So $\varphi = \sup_{ \rho \in \mathcal{F}_\varphi}  \rho$.
\end{proof}

\begin{dfn} \label{Dfn=Ultraproduct}
As before let $\cM_j$ be von Neumann algebras and let $\cM = \prod_{j, \omega} \cM_j$ be its ultraproduct.
Let $\varphi_j$ be  a normal weight on the von Neumann algebra $\cM_j$. Consider the set $\mathcal{F} := \mathcal{F}_{\varphi_j, j \in \mathbb{N}}$ of all ultraproducts $(\rho_j)_j \in \cM_\ast$ with $\rho_j \in \mathcal{F}_{\varphi_j}$ and $\Vert \rho_j \Vert$ bounded in $j$. Then set $\varphi = \sup_{\rho \in \mathcal{F}} \rho$.  We call $\varphi$ the {\it ultraproduct weight} and we denote it by $(\varphi_j)_\omega$. Clearly $\varphi$ is normal.
\end{dfn}

\begin{rmk}\label{Rmk=AndoHaagerupWeight}
In \cite{AndoHaagerup}   another ultraproduct of  weights was considered. Suppose that $\cN$ is a von Neumann algebra equipped with a fixed normal weight $\varphi_\cN$. Let $\cM_j = \cN$ and let $\cM = \prod_{j, \omega} \cN$ be its ultraproduct. There exists a conditional expectation $\mathcal{E}: \cM \rightarrow \cN: (x_j)_\omega \mapsto \lim_{j, \omega} x_j$, where the limit is understood in the $\sigma$-weak topology. The normal weight $\varphi_\cM = \varphi_\cN \circ \mathcal{E}$ is in \cite{AndoHaagerup} defined as the ultraproduct of the fixed series of weights $\varphi_\cN$. This definition does not agree with ours. Take for example $\cN = L^\infty(\mathbb{R})$ with weight $\varphi_\cN$ the integral against the standard Lebesgue measure. Take $f_j$ the indicator function on $[j, j+1]$. Then $f_j \rightarrow 0$ in the $\sigma$-weak topology. So $\varphi_\cN \circ \mathcal{E}((f_j)_\omega) = 0$. Whereas $\langle (\varphi)_\omega, (f_j)_\omega \rangle = 1$. It is easy to check that in general $\varphi_\cM \leq (\varphi)_\omega$.
\end{rmk}

\subsection{Spatial theory of ultraproducts}
Let again  $\cM_j$ be a sequence of  von Neumann algebras and let $(\cM_j, \cH_j, J_j, P_j)$
be its standard form which we may and will identify (uniquely, see discussion below Definition \ref{Dfn=Standard}) with $(\cM_j, L^2(\cM_j), J_j, L^2(\cM_j)^+)$.
Let $\cM = \prod_{j, \omega} \cM_j$ be its Groh-Raynaud ultrapower which acts on the ultraproduct Hilbert space $\cH = \prod_{j, \omega} \cH_j$. By \cite{AndoHaagerup} $(\cM, \cH, J_\omega, \mathcal{P})$ is a standard form where $\mathcal{P}$ is the positive cone given by  the vectors $(\xi_j)_\omega$ with $\xi_j \in \mathcal{P}_j$. Therefore we may and will identify  $(\cM, \cH, J_\omega, \mathcal{P})$ with $(\cM, L^2(\cM), J: x \mapsto x^\ast, L^2(\cM)^+)$ and again there is a unique such identification.  Recall also that for a normal, semi-finite, faithful weight $\varphi_j$ on $\cM_j$ the map $z \mapsto z D_{\varphi_j}^{\frac{1}{2}}$ is a GNS-construction and the analogous result holds for weights on $\cM$.
  If $\rho = (\rho_j)_\omega$ is a vector in $\cM_\ast$ we get vectors $D_\rho^{\frac{1}{2}}$ and $D_{\rho_j}^{\frac{1}{2}}$ in the respective positive cones $\mathcal{P}$ and $\mathcal{P}_j$. Then $D_{\rho}^{\frac{1}{2}} = (D_{\rho_j}^{\frac{1}{2}})_\omega$. In the weight case we get Lemma \ref{Lem=OpenLemma} below.

For the next lemma we use the notation $\Vert z \Vert_{2, \varphi} = \varphi(z^\ast z )$.
 Also recall again the conventions on closures of operators in $L^p$-spaces from Remark \ref{Rmk=Notation}.

\begin{lem}
Let $\cN$ be a von Neumann algebra. Suppose that $(\rho_k)_{k \in K}$ is an increasing net in $\cN_\ast$ with supremum a normal, semi-finite weight $\varphi$. Let $z \in \nphi$. Then, $\Vert z D_{\rho_k}^{\frac{1}{2}} - z D_{\varphi}^{\frac{1}{2}}\Vert_2 \rightarrow 0$.
\end{lem}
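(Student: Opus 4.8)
The plan is to carry out everything inside the Hilbert space $L^2(\cN) = L^2(\cN,\kappa')$, writing $\xi = z D_\varphi^{\frac12}$ and $\xi_k = z D_{\rho_k}^{\frac12}$. First note these are genuine $L^2$-vectors: since $\rho_k \leq \varphi$ we have $z \in \nphi \subseteq \mathfrak{n}_{\rho_k}$, and using $z D_\sigma z^\ast = D_{z\sigma z^\ast}$ together with ${\rm Tr}(D_\sigma) = \sigma(1)$ one gets $\Vert z D_\sigma^{\frac12}\Vert_2^2 = {\rm Tr}(z D_\sigma z^\ast) = \sigma(z^\ast z)$ for every normal $\sigma \leq \varphi$; in particular $\Vert \xi_k\Vert_2^2 = \rho_k(z^\ast z)$ and $\Vert \xi\Vert_2^2 = \varphi(z^\ast z)$. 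The device on which the whole argument rests is to realise $\xi_k$ as a \emph{right translate} of the single fixed vector $\xi$, which reduces the statement to a strong-convergence statement for contractions in $\cN$.

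Concretely, since $D_{\rho_k} \leq D_\varphi$, the operator $s_k := [D_{\rho_k}^{\frac12} D_\varphi^{-\frac12}]$ is $0$-homogeneous, hence lies in $\cN$, and $s_k^\ast s_k = [D_\varphi^{-\frac12} D_{\rho_k} D_\varphi^{-\frac12}] \leq p_\varphi$ (this is exactly the element $x_{\rho_k}$ from the proof of Theorem \ref{Thm=NetStructure}), so $\Vert s_k\Vert \leq 1$. A direct check gives $D_\varphi^{\frac12} s_k^\ast = [D_\varphi^{\frac12} D_\varphi^{-\frac12} D_{\rho_k}^{\frac12}] = D_{\rho_k}^{\frac12}$, whence $\xi_k = z D_\varphi^{\frac12} s_k^\ast = \xi s_k^\ast$. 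Right multiplication by $b \in \cN$ is a bounded operation on $L^2(\cN)$, with $\Vert \eta b\Vert_2^2 = {\rm Tr}(\eta^\ast\eta\, b b^\ast) \leq \Vert b\Vert^2 \Vert\eta\Vert_2^2$. Writing $g_k = s_k - p_\varphi$ and using $\xi p_\varphi = \xi$, I would then compute
\[
\Vert \xi_k - \xi\Vert_2^2 = \Vert \xi g_k^\ast\Vert_2^2 = {\rm Tr}\big(\xi^\ast\xi\, g_k^\ast g_k\big) = \omega(g_k^\ast g_k),
\]
where $\omega = {\rm Tr}(\xi^\ast\xi\,\cdot\,)$ is a normal positive functional on $\cN$. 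Since the $g_k$ are uniformly bounded, it is enough to prove $s_k \to p_\varphi$ strongly: then $g_k \to 0$ strongly, hence $g_k^\ast g_k \to 0$ strongly and therefore $\sigma$-weakly, and normality of $\omega$ gives $\omega(g_k^\ast g_k) \to 0$.

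It remains to establish $s_k \to p_\varphi$ strongly, and this is where the analytic content lies. Because $\varphi = \sup_k \rho_k$ as weights and $\theta^{\kappa'}(\eta,\eta) \in \cNext$, the identity $\langle \sup_k\rho_k, m\rangle = \sup_k\langle\rho_k,m\rangle$ on the extended positive cone shows that the closed quadratic forms $q_{\rho_k}(\eta) = \langle \rho_k, \theta^{\kappa'}(\eta,\eta)\rangle = \Vert D_{\rho_k}^{\frac12}\eta\Vert^2$ increase pointwise to $q_\varphi(\eta) = \Vert D_\varphi^{\frac12}\eta\Vert^2$. The monotone convergence theorem for nondecreasing closed forms then yields $D_{\rho_k}^{\frac12}\eta \to D_\varphi^{\frac12}\eta$ for every $\eta \in \Dom(D_\varphi^{\frac12})$. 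For $\zeta = D_\varphi^{\frac12}\eta$ with $\eta \in p_\varphi\cH \cap \Dom(D_\varphi^{\frac12})$ this gives $s_k\zeta = D_{\rho_k}^{\frac12}\eta \to D_\varphi^{\frac12}\eta = \zeta = p_\varphi\zeta$; as such $\zeta$ are dense in $p_\varphi\cH$, as $s_k = p_\varphi = 0$ on $p_\varphi^\perp\cH$, and as $\Vert s_k\Vert\leq 1$, the uniform bound upgrades this to $s_k \to p_\varphi$ strongly on all of $\cH$. The hard part, I expect, will be the careful justification of the unbounded manipulations defining $s_k$ and proving $D_\varphi^{\frac12}s_k^\ast = D_{\rho_k}^{\frac12}$ (domains and closures, handled by the conventions of Remark \ref{Rmk=Notation}), together with the square-root convergence $D_{\rho_k}^{\frac12}\to D_\varphi^{\frac12}$ for the increasing net of spatial derivatives, which is precisely the step requiring the monotone convergence of forms.
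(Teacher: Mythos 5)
Your argument is correct in its architecture, but it takes a genuinely different route from the paper. The paper first reduces to the case of faithful $\varphi$, then treats $z$ in the Tomita algebra $\mathcal{T}_\varphi$ by expanding $\Vert z D_{\rho_k}^{1/2} - z D_\varphi^{1/2}\Vert_2^2$ into four terms, rewriting the cross terms as traces ${\rm Tr}(x_{\rho_k} D_\varphi^{1/4} z^\ast z D_\varphi^{3/4})$ via the substitution $D_{\rho_k}^{1/2} = D_\varphi^{1/4} x_{\rho_k} D_\varphi^{1/4}$ with $x_{\rho_k} = D_\varphi^{-1/4} D_{\rho_k}^{1/2} D_\varphi^{-1/4} \nearrow 1$, and finally passing to general $z \in \nphi$ by a second $\Vert\cdot\Vert_{2,\varphi}$-approximation. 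You instead factor $z D_{\rho_k}^{1/2} = (z D_\varphi^{1/2}) s_k^\ast$ through a single contraction $s_k \in \cN$ and reduce the whole statement to $s_k \to p_\varphi$ strongly; this eliminates the Tomita algebra, the two-step approximation, and the reduction to faithful $\varphi$ in one stroke, and it isolates the analytic content cleanly. Note that the two proofs are really confronting the same difficulty in different clothing: expanding $\omega(g_k^\ast g_k) = \omega(x_{\rho_k}) - 2\,{\rm Re}\,\omega(s_k) + \omega(p_\varphi)$ shows that your cross term $\omega(s_k)$ is exactly the paper's cross term $\langle z D_{\rho_k}^{1/2}, z D_\varphi^{1/2}\rangle$, i.e.\ the issue is always the convergence of the \emph{square roots} $D_{\rho_k}^{1/2}$, which does not follow formally from $\rho_k \nearrow \varphi$.

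The one place where you owe more than you deliver is the step you yourself flag: the claim that $D_{\rho_k}^{1/2}\eta \to D_\varphi^{1/2}\eta$ in norm for every $\eta \in \Dom(D_\varphi^{1/2})$. The textbook monotone convergence theorem for nondecreasing closed forms (Kato, Thm.~VIII.3.13a) gives strong \emph{resolvent} convergence $D_{\rho_k} \to D_\varphi$ together with $q_{\rho_k}(\eta) \to q_\varphi(\eta)$; the norm convergence of $D_{\rho_k}^{1/2}\eta$ is strictly more and is not a one-line consequence, since $t \mapsto \sqrt{t}$ is unbounded. It is nevertheless true: writing $g_n(t) = \min(\sqrt t, n)$, one has $\Vert D_{\rho_k}^{1/2}\eta - g_n(D_{\rho_k})\eta\Vert^2 \leq \int_{t > n^2} t\, d\mu_k^\eta(t) = q_{\rho_k}(\eta) - \Vert g_n'(D_{\rho_k})\eta\Vert^2$ (with $g_n'(t)=\sqrt{t}\,\chi_{[0,n^2]}$ suitably smoothed), and the convergences $q_{\rho_k}(\eta)\to q_\varphi(\eta)$ and $g(D_{\rho_k})\eta \to g(D_\varphi)\eta$ for bounded continuous $g$ give uniform smallness of these tails in $k$, whence a $3\epsilon$-argument concludes. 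You should either include this argument or cite a reference that explicitly covers convergence of the square roots on the form domain; as written, ``the monotone convergence theorem then yields'' overstates what the standard statement provides. With that step supplied, the remaining bookkeeping (existence and $0$-homogeneity of $s_k = [D_{\rho_k}^{1/2} D_\varphi^{-1/2}]$, the adjoint identity $D_{\rho_k}^{1/2} = [D_\varphi^{1/2} s_k^\ast]$, $\xi p_\varphi = \xi$, and the traciality used for $\Vert \xi g_k^\ast\Vert_2^2 = \omega(g_k^\ast g_k)$) is of the same nature as the manipulations already carried out in the proof of Theorem \ref{Thm=NetStructure} and is unproblematic.
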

\begin{proof}
Without loss of generality we may assume that $\varphi$ is faithful, because if it is not then we may consider $\varphi + \varphi'$ instead where $\varphi$ is a normal, semi-finite weight with support equal to $1- p_\varphi$ and for which $z \in \mathfrak{n}_{\varphi'}$. Then $\varphi'  = \sup_{\rho' \in \mathcal{F}_{\varphi'}} \rho'$ so that the $\rho_k + \rho'$ with $k \in K$ and $\rho' \in \mathcal{F}_{\varphi'}$ form an increasing net converging to $\varphi + \varphi'$. Then, assuming that we can prove the lemma for $\varphi + \varphi'$, we have $\Vert z D_{\rho_k + \rho'}^{\frac{1}{2}} - z D_{\varphi + \varphi'}^{\frac{1}{2}} \Vert_2 \rightarrow 0$. By orthogonality,
\[
\Vert  z D_{\rho_k + \rho'}^{\frac{1}{2}} - z D_{\varphi + \varphi'}^{\frac{1}{2}} \Vert_2^2 =
\Vert z D_{\rho_k}^{\frac{1}{2}} - z D_{\varphi}^{\frac{1}{2}} \Vert_2^2 + \Vert z D_{\rho'}^{\frac{1}{2}} - z D_{\varphi}^{\frac{1}{2}} \Vert_2^2
\]
so that  $\Vert z D_{\rho_k }^{\frac{1}{2}} - z D_{\varphi}^{\frac{1}{2}} \Vert_2 \rightarrow 0$.

Because $\varphi$ is faithful we may consider its Tomita algebra $\mathcal{T}_\varphi$. We first prove the lemma under the assumption that $z \in \mathcal{T}_{\varphi}$.
As $\rho_k \leq \varphi$ we have that $D_{\rho_k}^{\frac{1}{2}} \leq D_\varphi^{\frac{1}{2}}$. So that $x_{\rho_k} =  D_\varphi^{-\frac{1}{4}} D_{\rho_k}^{\frac{1}{2}} D_\varphi^{-\frac{1}{4}}$ is an element of $\cN$ with norm $\leq 1$. Moreover $(x_{\rho_k})_{k \in K}$ is an increasing net with supremum equal to $1$ (see the proof of Theorem \ref{Thm=NetStructure}). In particular $x_{\rho_k} \rightarrow 1$ in the strong topology. Now,
\begin{equation}\label{Eqn=StartConvergence}
\begin{split}
& \Vert z D_{\rho_k}^{\frac{1}{2}} - z D_\varphi^{\frac{1}{2}} \Vert_2^2 \\
= &
\langle   z D_{\rho_k}^{\frac{1}{2}},  z D_{\rho_k}^{\frac{1}{2}}  \rangle +
\langle  z D_\varphi^{\frac{1}{2}}, z D_\varphi^{\frac{1}{2}}   \rangle -
\langle  z D_\varphi^{\frac{1}{2}},  z D_{\rho_k}^{\frac{1}{2}}  \rangle -
\langle  z D_{\rho_k}^{\frac{1}{2}}  ,  z D_\varphi^{\frac{1}{2}}  \rangle \\
= & \langle   z  D_{\rho_k}^{\frac{1}{2}},  z D_{\rho_k}^{\frac{1}{2}}  \rangle +
\langle  z D_\varphi^{\frac{1}{2}}, z D_\varphi^{\frac{1}{2}}   \rangle -
\langle  z D_\varphi^{\frac{1}{2}},  z D_{\varphi}^{\frac{1}{4}}  x_{\rho_k} D_{\varphi}^{\frac{1}{4}} \rangle -
\langle  z D_{\varphi}^{\frac{1}{4}}  x_{\rho_k} D_{\varphi}^{\frac{1}{4}} ,  z D_\varphi^{\frac{1}{2}}  \rangle. \\
\end{split}
\end{equation}
We have, c.f. Remark \ref{Rmk=Notation},
\[
\begin{split}
& \langle   z  D_{\rho_k}^{\frac{1}{2}},  z D_{\rho_k}^{\frac{1}{2}}  \rangle  =
 {\rm Tr}\left(     z      D_{\rho_k}   z^\ast  \right) =  {\rm Tr}\left(       D_{z \rho_k z^\ast}      \right) \rightarrow  {\rm Tr}\left(         D_{z \varphi z^\ast}    \right) =   {\rm Tr}\left(     z    D_{\varphi} z^\ast    \right)  = \langle   z  D_{\varphi}^{\frac{1}{2}},  z D_{\varphi}^{\frac{1}{2}}   \rangle.
\end{split}
\]
Then we get as $x_{\rho_k} \rightarrow 1$ strongly (and as $z \in \mathcal{T}_\varphi$ each of the following expressions is well defined),
\[
\begin{split}
& \langle  z D_\varphi^{\frac{1}{2}},  z D_{\varphi}^{\frac{1}{4}}  x_{\rho_k} D_{\varphi}^{\frac{1}{4}} \rangle =
  {\rm Tr}\left(  D_{\varphi}^{\frac{1}{4}}  x_{\rho_k} D_{\varphi}^{\frac{1}{4}}   z^\ast z D_\varphi^{\frac{1}{2}} \right) \\
  = &
    {\rm Tr}\left(    x_{\rho_k} D_{\varphi}^{\frac{1}{4}}   z^\ast z D_\varphi^{\frac{3}{4}} \right)
    \rightarrow
     {\rm Tr}\left(   D_{\varphi}^{\frac{1}{4}}   z^\ast z D_\varphi^{\frac{3}{4}} \right)
     =
\langle  z D_\varphi^{\frac{1}{2}},  z D_{\varphi}^{\frac{1}{2}}   \rangle.
\end{split}
\]
And similarly,
\[
\langle  z D_{\varphi}^{\frac{1}{4}}  x_{\rho_k} D_{\varphi}^{\frac{1}{4}} ,  z D_\varphi^{\frac{1}{2}}  \rangle
\rightarrow \langle  z D_{\varphi}^{\frac{1}{2}}   ,  z D_\varphi^{\frac{1}{2}}  \rangle.
\]
 This shows that \eqref{Eqn=StartConvergence} converges to 0.

Next we treat a general $z \in \nphi$. Let $z_l \in \mathcal{T}_\varphi$ be such that $\Vert z - z_l \Vert_{2, \varphi} \rightarrow 0$. Recall again that $\Vert z - z_l \Vert_{2, \rho_k} \leq \Vert z - z_l \Vert_{2, \varphi}$. So let $\epsilon > 0$ and take $l$  such that  $\Vert z - z_l \Vert_{2, \varphi} < \epsilon $. Then let $k_0$ be such that for $k \geq k_0$ we have $\Vert z_l D_{\rho_k}^{\frac{1}{2}} - z_l D_{\varphi}^{\frac{1}{2}} \Vert  < \epsilon$. We get that,
\[
\Vert z D_{\rho_k}^{\frac{1}{2}} - z D_{\varphi}^{\frac{1}{2}} \Vert_2 \leq
\Vert z D_{\rho_k}^{\frac{1}{2}} - z_l D_{\rho_k}^{\frac{1}{2}} \Vert_2
+ \Vert z_l D_{\rho_k}^{\frac{1}{2}} - z_l D_{\varphi}^{\frac{1}{2}} \Vert_2
+ \Vert z_l D_{\varphi}^{\frac{1}{2}} - z D_{\varphi}^{\frac{1}{2}} \Vert_2 < 3 \epsilon.
\]
This shows that $\Vert z D_{\rho_k}^{\frac{1}{2}} - z D_{\varphi}^{\frac{1}{2}}\Vert_2 \rightarrow 0$.
\end{proof}

\begin{lem}\label{Lem=Atom}
For every $j$ let $p_j$ be an atomic projection in $\cM_j$. Then $(p_j)_\omega$ is atomic in $\cM$.
\end{lem}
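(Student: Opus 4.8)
The plan is to prove the equivalent statement that the corner $p\cM p$, with $p := (p_j)_\omega$, equals the scalars $\mathbb{C}p$, which is exactly the assertion that $p$ is atomic (minimal). First I would record the elementary facts. Since $p_j^\ast = p_j = p_j^2$, the element $p=(p_j)_\omega$ is a projection in $\cM$; and since each $p_j$ is a nonzero atomic projection we have $\Vert p_j \Vert = 1$, so $\Vert p \Vert = \lim_{j,\omega} \Vert p_j \Vert = 1$ and in particular $p \neq 0$.

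The core computation I would carry out on the Banach-space ultraproduct $\cA = \prod_{j,\omega}\cM_j \subseteq \cM$, of which $\cM$ is the strong closure. Fix $x=(x_j)_\omega \in \cA$. Atomicity of $p_j$ means $p_j \cM_j p_j = \mathbb{C}p_j$, so there are scalars $\lambda_j$ with $p_j x_j p_j = \lambda_j p_j$ and $|\lambda_j| = \Vert p_j x_j p_j \Vert \leq \Vert x_j \Vert \leq \sup_j \Vert x_j \Vert < \infty$. Hence $\lambda := \lim_{j,\omega}\lambda_j$ exists, and since $\Vert (\lambda_j p_j)_\omega - \lambda p \Vert = \lim_{j,\omega}|\lambda_j - \lambda| \Vert p_j \Vert = 0$, we obtain $p x p = (p_j x_j p_j)_\omega = \lambda p$. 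Thus $p \cA p \subseteq \mathbb{C}p$.

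The main obstacle is upgrading this from $\cA$ to its strong closure $\cM$, since a general element of $\cM$ need not be of the form $(x_j)_\omega$. Here I would invoke the Kaplansky density theorem: as $\cA$ is a strongly dense $\ast$-subalgebra of $\cM$, its unit ball is strongly dense in the unit ball of $\cM$. Given $y \in \cM$, which we may assume satisfies $\Vert y \Vert \leq 1$, choose a net $a_i \in \cA$ with $\Vert a_i \Vert \leq 1$ and $a_i \to y$ strongly. Since left and right multiplication by the fixed bounded operator $p$ are strongly continuous, $p a_i p \to p y p$ strongly. Writing $p a_i p = \lambda_i p$ with $|\lambda_i| \leq 1$, compactness of the closed unit disk yields a subnet with $\lambda_{i'} \to \lambda$; then $p a_{i'} p = \lambda_{i'} p \to \lambda p$ in norm, while the subnet still converges strongly to $p y p$. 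By uniqueness of strong limits, $p y p = \lambda p$, so $p \cM p = \mathbb{C}p$ and $p$ is atomic.

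I expect the only delicate point to be this last density-plus-compactness step, where the passage through Kaplansky density and the extraction of a convergent subnet of scalars are both essential; the rest follows directly from the ideal structure of the corners $p_j \cM_j p_j$.
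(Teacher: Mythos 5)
Your proof is correct and follows essentially the same route as the paper: identify the corner over the elementary sequences as $\prod_{j,\omega} p_j\cM_j p_j \simeq \mathbb{C}$ and then pass to the strong closure. The paper compresses the closure step into one sentence (a finite-dimensional subspace is already closed), whereas you make it explicit via Kaplansky density and a compactness argument for the scalars; both are fine.
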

\begin{proof}
The set $(p_j)_\omega \cM (p_j)_\omega$ is the closure of all series $(p_j)_\omega (x_j)_\omega  (p_j)_\omega = (p_j x_j p_j)_\omega$ where $(x_j)_\omega \in \cM$. So it equals the Banach space ultraproduct $\prod_{j, \omega} p_j \cM_j p_j \simeq \prod_{j, \omega} \mathbb{C} \simeq \mathbb{C}$.
\end{proof}

\begin{lem}\label{Lem=TakingTheSup}
Let $\varphi = (\varphi_j)_\omega$ be an ultrapower of normal, semi-finite, faithful weights $\varphi_j$. Assume that $\varphi$ is semi-finite. For $\rho_j \in \mathcal{F}_{\varphi_j} \subseteq (\cM_j)_\ast$ set $x_{\rho_j} = D_{\varphi_j}^{-\frac{1}{2}} D_{\rho_j} D_{\varphi_j}^{-\frac{1}{2}}$. We have that the supremum over $\rho = (\rho_j)_\omega \in \mathcal{F} := \mathcal{F}_{\varphi_j, j \in \mathbb{N}}$ of $(x_{\rho_j})_\omega$ is 1.
\end{lem}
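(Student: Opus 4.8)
The plan is to identify the supremum $q$ of the family with the projection $e$ for which $\cM e$ equals the $\sigma$-strong closure of $\nphi$, and then to invoke the hypothesis, since ``$\varphi$ is semi-finite'' means exactly $e=1$. Throughout I would use the order isomorphism, established in the proof of Theorem \ref{Thm=NetStructure}, between $\mathcal{F}_{\varphi_j}$ and $\{x\in\cM_j^+ : x\le 1,\ \varphi_j(x)<\infty\}$, $\rho_j\mapsto x_{\rho_j}$, together with the identity $\|\rho_j\|=\varphi_j(x_{\rho_j})$.

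First I would check that $\{(x_{\rho_j})_\omega : \rho\in\mathcal{F}\}$ is an increasing net of positive contractions, so that $q=\sup_\rho (x_{\rho_j})_\omega$ exists as a strong limit with $0\le q\le 1$. Directedness is obtained fiberwise as in Theorem \ref{Thm=NetStructure}: given $\rho^{(1)}=(\rho^{(1)}_j)_\omega$ and $\rho^{(2)}=(\rho^{(2)}_j)_\omega$ in $\mathcal{F}$, put $x_j=F\!\left(F^{-1}(x_{\rho^{(1)}_j})+F^{-1}(x_{\rho^{(2)}_j})\right)$ with $F(s)=s/(1+s)$; then $x_{\rho^{(1)}_j},x_{\rho^{(2)}_j}\le x_j\le 1$ and, by operator concavity of $F$, $x_j\le 2(x_{\rho^{(1)}_j}+x_{\rho^{(2)}_j})$, so the $\rho_j\in\mathcal{F}_{\varphi_j}$ with $x_{\rho_j}=x_j$ has $\|\rho_j\|=\varphi_j(x_j)\le 2(\|\rho^{(1)}_j\|+\|\rho^{(2)}_j\|)$ uniformly bounded, whence $\rho=(\rho_j)_\omega\in\mathcal{F}$ dominates both.

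Next I would prove the easy inclusion $q\le e$: for every $\rho\in\mathcal{F}$ one has $\varphi((x_{\rho_j})_\omega)\le \lim_{j,\omega}\varphi_j(x_{\rho_j})=\lim_{j,\omega}\|\rho_j\|<\infty$, so $(x_{\rho_j})_\omega\in\mphi^+$, its support lies under $e$, and hence $(x_{\rho_j})_\omega\le e$; taking suprema gives $q\le e$. The heart of the argument is the reverse inequality $e\le q$. Since $e=\sup\{\,\supp(y):y\in\mphi^+\,\}$, it suffices to show $\supp(y)\le q$ for each $y=(y_j)_\omega\in\mphi^+$ with $0\le y_j\le 1$. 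For fixed $\epsilon>0$ the projection $P_\epsilon:=\chi_{[\epsilon,1]}((y_j)_\omega)$ satisfies $P_\epsilon\le \tfrac1\epsilon (y_j)_\omega$, so $\varphi(P_\epsilon)<\infty$; choosing a continuous $g$ with $\chi_{[\epsilon,1]}\le g\le\chi_{[\epsilon/2,1]}$ and applying Lemma \ref{Lem=FunctionaCalculus} gives $P_\epsilon\le g((y_j)_\omega)=(g(y_j))_\omega\le(p_j)_\omega$, where $p_j=\chi_{[\epsilon/2,1]}(y_j)$ are projections and $(p_j)_\omega\le \tfrac2\epsilon (y_j)_\omega$ again has finite $\varphi$-weight. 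I would then show $(p_j)_\omega\le q$, whence $P_\epsilon\le q$; letting $\epsilon\downarrow 0$ yields $\supp(y)=\sup_\epsilon P_\epsilon\le q$, and the supremum over $y$ gives $e\le q$. Together with $q\le e$ this gives $q=e=1$.

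The main obstacle is precisely this last step: for a projection $P=(p_j)_\omega$ with $\varphi(P)<\infty$, exhibit a fiberwise representative $(\tilde p_j)_\omega=P$ with $\sup_j\varphi_j(\tilde p_j)<\infty$, so that $P$ lands in $\{(x_j)_\omega : 0\le x_j\le 1,\ \sup_j\varphi_j(x_j)<\infty\}$ and hence $P\le q$. The difficulty is that $\varphi(P)<\infty$ controls the weight only at the ultraproduct level, while the naive representative may have $\lim_{j,\omega}\varphi_j(p_j)=\infty$: there can be ``escaping mass'' living where the densities $D_{\varphi_j}$ blow up but which is invisible to $\varphi$ (already on $M_2$ a projection of huge $\varphi_j$-weight can coincide in the ultraproduct with a small projection once the $D_{\varphi_j}$ rotate away from it, so that only \emph{some} representative is fiberwise bounded). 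One must therefore choose the representative so as to discard this mass while preserving the class in $\cM$; I expect this to require the bounded transform together with the spectral estimate of Lemma \ref{Lem=UltraOfElements}, handling the atomic part of the $\cM_j$ separately via Lemma \ref{Lem=Atom} and splitting the weight of the continuous part into uniformly bounded pieces.
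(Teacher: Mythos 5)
Your proposal correctly reduces the lemma to its real content, but it does not prove that content: the step you yourself flag as ``the main obstacle'' --- showing that a projection $P=(p_j)_\omega$ with $\varphi(P)<\infty$ is dominated by $q$ --- is exactly where all the work of the paper's proof lies, and you only sketch a guess at the tools (``I expect this to require\dots''). Moreover, the resolution you propose for that step, namely exhibiting a \emph{single} representative $(\tilde p_j)_\omega=P$ with $\sup_j\varphi_j(\tilde p_j)<\infty$, is stronger than what is needed and is not what the paper establishes. The paper instead writes a given $(x_j)_\omega$ with $x_j\in\mathfrak{m}_{\varphi_j}^+$, $x_j\le 1$, as a \emph{supremum} of a family $\mathcal{A}$ of elements $(z_j)_\omega$, where each $z_j$ is obtained by chopping the spectral decomposition of $x_j$ into pieces: intervals of the continuous part chosen so that $\varphi_j(z_j)\le 1$, and subprojections of the atomic part that are either of $\varphi_j$-weight $\le 1$ or multiples of atomic projections of $\cM_j$. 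The decisive point, which your sketch does not contain, is that a sequence of atomic projections $p_j$ with $\lim_{j,\omega}\varphi_j(p_j)=\infty$ yields (via Lemma \ref{Lem=Atom}) an atomic projection of $\cM$ on which $\varphi$ is infinite, contradicting semi-finiteness of $\varphi$; this is the only place the hypothesis on $\varphi$ enters, and it is what forces every piece in $\mathcal{A}$ to have uniformly bounded fiber weights and hence to arise as $x_\rho$ for some $\rho\in\mathcal{F}$.

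A secondary issue is your reduction through $e$. You use $e=\sup\{\supp(y):y\in\mphi^+\}$ but then quantify only over $y=(y_j)_\omega$, i.e.\ over elements of $\mphi^+$ lying in the Banach space ultraproduct; that these already exhaust $e$ requires an argument, since not every element of $\cM$ is representable as a bounded sequence. The paper sidesteps this by starting from the family $\{(x_j)_\omega : x_j\in\mathfrak{m}_{\varphi_j}^+,\ x_j\le 1\}$, whose supremum is $1$ directly from semi-finiteness of the individual $\varphi_j$ (no appeal to $e$ or to semi-finiteness of $\varphi$ at that stage), and reserves the semi-finiteness of $\varphi$ for the atomic-projection argument above. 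Your directedness argument and the inequality $q\le e$ are fine, but as it stands the proof is incomplete at its central step.
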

\begin{proof}
Firstly note that the elements $(x_j)_\omega$ with $x_j \in \mathfrak{m}_{\varphi_j}^+$ and $x_j \leq 1$ form an increasing net that converges to 1 (see the proof of Theorem \ref{Thm=NetStructure}). Recall that for $x_j \in \mathfrak{m}_{\varphi_j}^+$ we have $x_j = x_{\rho_{x_j}}$ where $\rho_{x_j} := \varphi_{j,x_j}^{(-1/2)}$ (see \eqref{Eqn=ReverseThing}), but there is not necessarily a uniform bound on the $\rho_{x_j}$.
Now fix some $(x_j)_\omega$ with $x_j \in \mathfrak{m}_{\varphi_j}^+$ and $x_j \leq 1$.  Take a spectral decomposition $x_j = \int \lambda \: dE_{x_j}(\lambda)$. Note that the support of the spectral measure is contained in $[0,1]$. Let $S_j \subseteq (0,1]$ be the set of (non-zero) atoms, i.e. the set of $s \in (0,1]$ such that $\int \delta_s dE_{x_j}(\lambda)$ is non-zero, say $x_{j, s}$. For later use let $\widetilde{x}_j = x_j - \sum_{s \in S_j} x_{j, s}$ be the non-atomic part of $x_j$.
We decompose $x_{j, s}$ further into $x_{j, s} = \sum_{k \in K_{j,s}} x_{j, s, k}$ in such a way that either $\Vert \rho_{x_{j, s, k}}  \Vert \leq 1$ or  $\frac{1}{\Vert x_{j, s, k} \Vert} x_{j, s, k}$ is an atomic projection in $\cM_j$. For the non-atomic part for every $j$ we may take closed intervals $I_{j,t} \subseteq (0, 1]$ indexed by $t$ such that $y_{j,t} = \int_{I_{j,t}} \lambda \: dE_{\widetilde{x}_j}(\lambda)$ has the property that $\Vert \rho_{y_{j,t}}\Vert \leq 1$.

Now let $\mathcal{A}$ be the set of all ultraproducts $(z_j)_\omega$ where $z_j$ is one of the $x_{j,s,k}$'s or $y_{j,t}$'s defined above. We claim that $\lim_{j, \omega} \varphi_j(z_j) < \infty$. Indeed if this were not to be the case then for every $C> 1$ there exists $U \in \omega$ such that for every $j \in U$ we have $\varphi_j(z_j) > C$. But this means that for every $j \in U$, we had chosen $z_j$ to be a multiple of an atomic projection, i.e. $p_j := \frac{1}{\Vert z_j \Vert} z_j$. Then $(p_j)_\omega$ is an atomic projection on $\cM$ (see Lemma \ref{Lem=Atom}) and $(p_j)_\omega \cM (p_j)_\omega \simeq \mathbb{C}$. But then $\langle (\varphi_j)_\omega, (p_j)_\omega \rangle = \lim_{j, \omega} \varphi_j(p_j) >  C$. We may therefore take decreasing sets $U_1 \supseteq U_2 \supseteq \ldots$ in $\omega$ such that we may pick for every $j \in U_k$ an atomic projection $p_j$ such that $\varphi_j(p_j) > k$. Then $\langle (\varphi_j)_\omega, (p_j)_\omega \rangle = \lim_{j, \omega} \varphi_j(p_j) = \infty$  which contradicts that $\varphi$ is semi-finite (as $(p_j)_\omega$ is atomic).

 The previous paragraph shows that for $(z_j)_\omega \in \mathcal{A}$ we have that $\rho_{z_j}$ is bounded in $j$ (as $\Vert \rho_{z_j} \Vert = \varphi_j(z_j)$) and so $(\rho_{z_j})_\omega \in \mathcal{F}$. By construction $(x_j)_\omega$ is the smallest element that is bigger then all $(z_j)_\omega \in \mathcal{A}$. Then as all elements  $(x_j)_\omega$ with $x_j \in \mathfrak{m}_{\varphi_j}^+$ have supremum 1 we see that also the    supremum of $(x_{\rho_j})$ with $(\rho_j)_\omega \in \mathcal{F}$ equals 1.
\end{proof}

 \begin{lem}\label{Lem=OpenLemma}
 Let $\varphi = (\varphi_j)_\omega$ be an ultrapower of normal, semi-finite, faithful weights $\varphi_j$. Assume that $\varphi$ is semifinite.  Let $z_j \in \mathfrak{n}_{\varphi_j}$ be bounded in $j$ such that also $\Vert z_j \Vert_{2, \varphi_j}$ is bounded. Then $(z_j)_\omega \in \mathfrak{n}_{\varphi}$ and,
 \[
 (z_j D_{\varphi_j}^{\frac{1}{2}})_\omega = (z_j)_\omega D_\varphi^{\frac{1}{2}}.
 \]
 \end{lem}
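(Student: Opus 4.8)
The plan is to compare two descriptions of one and the same limit taken along the directed net $\mathcal{F}=\mathcal{F}_{\varphi_j,j\in\mathbb{N}}$ of Definition \ref{Dfn=Ultraproduct}. Set $\xi:=(z_jD_{\varphi_j}^{\frac12})_\omega$; since $\|z_jD_{\varphi_j}^{\frac12}\|_2^2=\varphi_j(z_j^\ast z_j)$ is bounded in $j$, this is a well defined vector of $L^2(\cM)$. First I would record that $(z_j)_\omega\in\nphi$: for each $\rho=(\rho_j)_\omega\in\mathcal{F}$ one has $\langle\rho,(z_j^\ast z_j)_\omega\rangle=\lim_{j,\omega}\rho_j(z_j^\ast z_j)\le\sup_j\varphi_j(z_j^\ast z_j)$, and taking the supremum over $\mathcal{F}$ gives $\varphi((z_j)_\omega^\ast(z_j)_\omega)\le\sup_j\varphi_j(z_j^\ast z_j)<\infty$. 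Directedness of $\mathcal{F}$ with $\sup\mathcal{F}=\varphi$ follows as in Theorem \ref{Thm=NetStructure}, the norms remaining bounded along joins. For a bounded $\rho=(\rho_j)_\omega\in\mathcal{F}$ we have $D_\rho^{\frac12}=(D_{\rho_j}^{\frac12})_\omega$, so the componentwise (left) action gives $\eta_\rho:=(z_j)_\omega D_\rho^{\frac12}=(z_jD_{\rho_j}^{\frac12})_\omega$. Applying the preceding lemma to $\cN=\cM$, the net $\mathcal{F}$, and $z=(z_j)_\omega\in\nphi$ yields $\eta_\rho\to(z_j)_\omega D_\varphi^{\frac12}=\Lambda_\varphi((z_j)_\omega)=:\eta$ in $L^2(\cM)$. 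By uniqueness of limits it then suffices to prove $\eta_\rho\to\xi$.

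For the norms I would use the factorization $D_{\rho_j}^{\frac12}=D_{\varphi_j}^{\frac12}v_j$ with $v_j:=[D_{\varphi_j}^{-\frac12}D_{\rho_j}^{\frac12}]$. As $D_{\rho_j}\le D_{\varphi_j}$, the operator $v_j$ is $0$-homogeneous with $\|v_j\|\le1$, hence $v_j\in\cM_j$, and $v_jv_j^\ast=D_{\varphi_j}^{-\frac12}D_{\rho_j}D_{\varphi_j}^{-\frac12}=x_{\rho_j}$. Thus $\eta_\rho=\xi\,(v_j)_\omega$ for the right action, and with $S:=\xi^\ast\xi\in L^1(\cM)^+$ (which under $L^1(\cM)\cong\cM_\ast=\prod_{j,\omega}(\cM_j)_\ast$ equals $(D_{\varphi_j}^{\frac12}z_j^\ast z_jD_{\varphi_j}^{\frac12})_\omega$) we obtain $\|\xi\|_2^2-\|\eta_\rho\|_2^2={\rm Tr}\big(S(1-(x_{\rho_j})_\omega)\big)$. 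By Lemma \ref{Lem=TakingTheSup} the positive contractions $(x_{\rho_j})_\omega$ increase to $1$ along $\mathcal{F}$, so normality of $S$ forces ${\rm Tr}(S(1-(x_{\rho_j})_\omega))\to0$; hence $\|\eta_\rho\|_2\to\|\xi\|_2$ and, together with $\eta_\rho\to\eta$, also $\|\eta\|_2=\|\xi\|_2$.

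The main obstacle is to upgrade this to genuine norm convergence $\eta_\rho\to\xi$, i.e. to show $\lim_{j,\omega}\|z_jD_{\varphi_j}^{\frac12}-z_jD_{\rho_j}^{\frac12}\|_2^2\to0$ as $\rho\to\varphi$; given the previous paragraph this is exactly the statement that the cross term ${\rm Re}\,\langle\eta_\rho,\xi\rangle$ tends to $\|\xi\|_2^2$. A direct Cauchy--Schwarz bound is circular (it reproduces $\|\eta_\rho-\xi\|_2$), and the obvious factorizations introduce asymmetric powers $D_{\varphi_j}^{\frac14}(\,\cdot\,)D_{\varphi_j}^{\frac34}$ whose $L^1$-norms are not controlled by $\sup_j\|z_j\|$ and $\sup_j\varphi_j(z_j^\ast z_j)$. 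I would therefore treat the cross term by a uniform analytic reduction. First establish $\eta_\rho\to\xi$ when the $z_j$ are uniformly analytic, namely $z_j\in\mathcal{T}_{\varphi_j}$ with $\sup_j\|\sigma^{\varphi_j}_z(z_j)\|<\infty$ on the relevant strip (available by the analytic smoothing for $\sigma^{\varphi_j}$ with uniform bounds, as in Lemma \ref{Lem=Approx}); for these the expansion of $\|z_jD_{\varphi_j}^{\frac12}-z_jD_{\rho_j}^{\frac12}\|_2^2$ may be carried out exactly as in the preceding lemma, now using the strong convergence $(x_{\rho_j})_\omega\to1$ of Lemma \ref{Lem=TakingTheSup} in place of its single--algebra counterpart. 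The analyticity is then removed by density: choosing $z_j'\in\mathcal{T}_{\varphi_j}$ with $\sup_j\varphi_j((z_j-z_j')^\ast(z_j-z_j'))<\varepsilon$ and exploiting the uniform-in-$\rho$ bound $\rho_j((z_j-z_j')^\ast(z_j-z_j'))\le\varphi_j((z_j-z_j')^\ast(z_j-z_j'))$, one passes the estimate to general $z_j$.

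Combining the two limits, $\eta_\rho\to\xi$ while also $\eta_\rho\to\Lambda_\varphi((z_j)_\omega)=(z_j)_\omega D_\varphi^{\frac12}$, so $(z_jD_{\varphi_j}^{\frac12})_\omega=(z_j)_\omega D_\varphi^{\frac12}$, which is the assertion.
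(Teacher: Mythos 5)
Your first two paragraphs are sound and essentially follow the paper: the membership $(z_j)_\omega\in\nphi$, the identification $(z_j)_\omega D_\varphi^{\frac12}=\lim_{\rho\in\mathcal{F}}(z_jD_{\rho_j}^{\frac12})_\omega$ via the preceding lemma applied to $\cM$, and the convergence of norms via the contraction $v_j=[D_{\varphi_j}^{-\frac12}D_{\rho_j}^{\frac12}]$ with $v_jv_j^\ast=x_{\rho_j}$ together with Lemma \ref{Lem=TakingTheSup} are all correct (the last computation is a nice touch not needed in the paper's own argument). You also correctly diagnose that everything hinges on the cross term.

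The gap is in your third paragraph: the ``uniform analytic reduction'' cannot be carried out. To run the expansion of the preceding lemma uniformly you need, for each $\varepsilon>0$, approximants $z_j'\in\mathcal{T}_{\varphi_j}$ with $\sup_j\Vert z_j-z_j'\Vert_{2,\varphi_j}<\varepsilon$ \emph{and} uniform-in-$j$ bounds on $\sigma^{\varphi_j}_w(z_j')$ (in operator norm and, in fact, also in $\Vert\cdot\Vert_{2,\varphi_j}$ on a strip, since that is what controls $\Vert D_{\varphi_j}^{\frac34}z_j'^\ast z_j'D_{\varphi_j}^{\frac14}\Vert_1$ uniformly). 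Lemma \ref{Lem=Approx} is a statement about a single von Neumann algebra; Gaussian mollification with a fixed parameter gives the uniform analyticity bounds but the error $\Vert z_j-z_j'\Vert_{2,\varphi_j}$ is governed by the modulus of continuity of $t\mapsto\Delta_j^{it}\Lambda_{\varphi_j}(z_j)$ at $t=0$, which is \emph{not} uniform in $j$ --- this is precisely the failure of strong continuity of $(\sigma_t^{\varphi_j})_\omega$ noted in the paper and in \cite[Example 4.7]{AndoHaagerup}. Choosing the mollification parameter per $j$ restores the approximation but destroys the uniform analyticity bounds, so the two requirements are in genuine tension and the density step collapses. The paper avoids the cross term altogether: it tests the norm limit $\xi=\lim_\rho(z_jD_{\rho_j}^{\frac12})_\omega$ against right multiplication by $(y_j^\ast)_\omega$ with $\Vert y_j\Vert$ and $\Vert y_j^\ast\Vert_{2,\varphi_j}$ bounded, writes $D_{\rho_j}^{\frac12}=D_{\varphi_j}^{\frac14}x_{\rho_j}D_{\varphi_j}^{\frac14}$, and observes that $(z_jD_{\varphi_j}^{\frac14})_\omega$ and $(D_{\varphi_j}^{\frac14}y_j^\ast)_\omega$ are uniformly bounded in $L^4$ by the hypotheses alone (no analyticity), so Raynaud's ultraproduct $L^p$-module theorem and the strong convergence $(x_{\rho_j})_\omega\to1$ of Lemma \ref{Lem=TakingTheSup} give $\xi(y_j^\ast)_\omega=(z_jD_{\varphi_j}^{\frac12})_\omega(y_j^\ast)_\omega$; letting the $y$'s increase to $1$ then identifies $\xi$. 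If you want to keep your scheme, you should replace the analytic reduction by this $L^4$--$L^4$ splitting against test vectors.
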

 \begin{proof}
 Firstly, with $\mathcal{F}$ as in Definition \ref{Dfn=Ultraproduct}, as,
 \[
 \langle (\varphi_j)_\omega, (z_j^\ast)_\omega (z_j)_\omega \rangle = \sup_{\rho \in \mathcal{F}} \lim_{j, \omega} \langle \rho_j, z_j^\ast z_j \rangle \leq \lim_j \langle \varphi_j, z_j^\ast z_j \rangle < \infty,
 \]
 it follows that $(z_j)_\omega \in \mathfrak{n}_\varphi$.
  Now by Lemma \ref{Lem=OpenLemma},
 \[
 \begin{split}
 (z_j)_\omega D_\varphi^{\frac{1}{2}} = \lim_{\rho \in \mathcal{F}} (z_j)_\omega D_{\rho}^{\frac{1}{2}}
 =  \lim_{\rho \in \mathcal{F}} (z_j)_\omega (D_{\rho_j}^{\frac{1}{2}})_\omega
 =  \lim_{\rho \in \mathcal{F}} ( z_j   D_{\rho_j}^{\frac{1}{2}} )_\omega,
 \end{split}
 \]
 and we claim that this limit equals $( z_j   D_{\varphi_j}^{\frac{1}{2}} )_\omega$. To see this  let $y_j \in \mathfrak{n}_{\varphi_j}$ be bounded in $j$ such that also $\Vert y_j^\ast \Vert_{2, \varphi_j}$ is bounded. Then, using the notation of Lemma \ref{Lem=TakingTheSup} and using \cite[Theorem 5.1]{Raynaud} in the last equality,
\[
\begin{split}
& \lim_{\rho \in \mathcal{F}} ( z_j   D_{\rho_j}^{\frac{1}{2}} )_\omega ( y_j^\ast)_\omega
 =
 \lim_{\rho \in \mathcal{F}} ( z_j   D_{\rho_j}^{\frac{1}{2}} y_j^\ast )_\omega \\
 = &
 \lim_{\rho \in \mathcal{F}} ( z_j   D_{\varphi_j}^{\frac{1}{4}} x_{\rho_j} D_{\varphi_j}^{\frac{1}{4}} y_j^\ast )_\omega
 =
\lim_{\rho \in \mathcal{F}} ( z_j   D_{\varphi_j}^{\frac{1}{4}})_\omega (x_{\rho_j})_\omega (D_{\varphi_j}^{\frac{1}{4}} y_j^\ast )_\omega.
\end{split}
 \]
which converges to $( z_j   D_{\varphi_j}^{\frac{1}{2}}  y_j^\ast )_\omega = ( z_j   D_{\varphi_j}^{\frac{1}{2}})_\omega  (y_j^\ast )_\omega$ by Lemma \ref{Lem=TakingTheSup}.  Then let $\xi$ be the norm limit $\lim_{\rho \in \mathcal{F}} ( z_j   D_{\rho_j}^{\frac{1}{2}} )_\omega$. We see that for all $(y_j)_\omega$ with $\Vert y_j^\ast \Vert_{2, \varphi_j}$ bounded we have $\xi (y_j^\ast)_\omega =    ( z_j   D_{\varphi_j}^{\frac{1}{2}} )_\omega (y_j^\ast )_\omega $. But this can only happen if $\xi =  ( z_j   D_{\varphi_j}^{\frac{1}{2}} )_\omega$ (indeed we can take a net of $(y_j)_\omega$'s that are positive and increasing to 1 as follows from the argument given in the proof of Lemma \ref{Lem=TakingTheSup}).  This concludes the proof.
 \end{proof}

 \begin{lem}\label{Lem=SymmetricUltra}
  Let $\varphi = (\varphi_j)_\omega$ be an ultrapower of normal, semi-finite, faithful weights $\varphi_j$. Assume that $\varphi$ is semifinite.  Let $y_j, z_j \in \mathfrak{n}_{\varphi_j}$ be bounded in $j$ such that also  $\Vert y_j \Vert_{2, \varphi_j}$ and $\Vert z_j \Vert_{2, \varphi_j}$ are bounded. Then,
  \[
  (\varphi_{y_j^\ast z_j}^{(-1/2)})_\omega =   \varphi_{(y_j^\ast z_j)_\omega}^{(-1/2)}
  \]
 \end{lem}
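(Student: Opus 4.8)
The plan is to verify the identity by checking that both sides are bounded normal functionals on $\cM$ that agree on the $\sigma$-weakly dense $\ast$-subalgebra of representable elements $(a_j)_\omega$, $a_j \in \cM_j$ with $\sup_j \Vert a_j \Vert < \infty$; since normal functionals are $\sigma$-weakly continuous, agreement there forces equality. Before doing so I would record well-definedness and the uniform bounds. Setting $u_j = y_j D_{\varphi_j}^{\frac{1}{2}}$ and $v_j = z_j D_{\varphi_j}^{\frac{1}{2}}$ in $\cH_j = L^2(\cM_j)$, one has $\Vert u_j \Vert_2^2 = {\rm Tr}(D_{\varphi_j}^{\frac{1}{2}} y_j^\ast y_j D_{\varphi_j}^{\frac{1}{2}}) = \varphi_j(y_j^\ast y_j) = \Vert y_j \Vert_{2,\varphi_j}$ and likewise $\Vert v_j \Vert_2^2 = \Vert z_j \Vert_{2,\varphi_j}$, both bounded in $j$; together with $\Vert \varphi_{y_j^\ast z_j}^{(-1/2)} \Vert \leq \Vert u_j \Vert_2 \Vert v_j \Vert_2$ this shows the left-hand ultraproduct lies in $\cM_\ast = \prod_{j,\omega}(\cM_j)_\ast$. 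For the right-hand side, Lemma \ref{Lem=OpenLemma} gives $(y_j)_\omega, (z_j)_\omega \in \nphi$, so $(y_j^\ast z_j)_\omega = (y_j)_\omega^\ast (z_j)_\omega \in \mphi$ and $\varphi_{(y_j^\ast z_j)_\omega}^{(-1/2)}$ is defined as in Remark \ref{Rmk=Notation}, with density $[(y_j)_\omega D_\varphi^{\frac{1}{2}}]^\ast [(z_j)_\omega D_\varphi^{\frac{1}{2}}]$.

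The key algebraic step is an inner-product rewriting of $\varphi_{y^\ast z}^{(-1/2)}$. For a normal, semi-finite, faithful weight $\varphi$ on $\cN$ and $y,z \in \nphi$, the $L^1$-density of $\varphi_{y^\ast z}^{(-1/2)}$ is $(y D_\varphi^{\frac{1}{2}})^\ast (z D_\varphi^{\frac{1}{2}})$, so for $a \in \cN$, using ${\rm Tr}(\eta^\ast \xi) = \langle \xi, \eta \rangle$ and cyclicity of the trace,
\[
\varphi_{y^\ast z}^{(-1/2)}(a) = {\rm Tr}\left( a\,(y D_\varphi^{\frac{1}{2}})^\ast (z D_\varphi^{\frac{1}{2}}) \right) = \left\langle a\,(y D_\varphi^{\frac{1}{2}})^\ast,\ (z D_\varphi^{\frac{1}{2}})^\ast \right\rangle_{L^2(\cN)}.
\]
The point of this form is that $a$ now enters \emph{only} through its left action on two honest vectors of $\cH$, which is exactly the operation that is coordinatewise on the ultraproduct Hilbert space; no right multiplication by unbounded densities or by $a$ survives.

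Applying this at each index and pairing the left-hand side against $(a_j)_\omega$, the Groh--Raynaud pairing gives
\[
\left\langle (\varphi_{y_j^\ast z_j}^{(-1/2)})_\omega,\, (a_j)_\omega \right\rangle = \lim_{j,\omega} \left\langle a_j u_j^\ast,\ v_j^\ast \right\rangle_{\cH_j} = \left\langle a\,(u_j^\ast)_\omega,\ (v_j^\ast)_\omega \right\rangle_{\cH},
\]
where I use that the inner product on $\cH = \prod_{j,\omega}\cH_j$ is $\lim_{j,\omega}$ of the coordinate inner products and that $(a_j)_\omega$ acts coordinatewise. Now $J_\omega = (J_j)_\omega$ and, by Lemma \ref{Lem=OpenLemma}, $(u_j)_\omega = (y_j D_{\varphi_j}^{\frac{1}{2}})_\omega = (y_j)_\omega D_\varphi^{\frac{1}{2}}$, hence $(u_j^\ast)_\omega = J_\omega (y_j)_\omega D_\varphi^{\frac{1}{2}} = ((y_j)_\omega D_\varphi^{\frac{1}{2}})^\ast$, and similarly $(v_j^\ast)_\omega = ((z_j)_\omega D_\varphi^{\frac{1}{2}})^\ast$. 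Substituting and running the inner-product computation of the previous paragraph in reverse, this time over $\cM$, identifies the limit with $\varphi_{(y_j^\ast z_j)_\omega}^{(-1/2)}(a)$. Agreement on all $(a_j)_\omega$ together with $\sigma$-weak density then yields the claimed equality.

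The only genuine content, and the step I expect to need the most care, is the inner-product rewriting: one must arrange the expression so that $a$ acts purely by left multiplication on $L^2$-vectors, since that is precisely what Lemma \ref{Lem=OpenLemma} and $J_\omega = (J_j)_\omega$ let one pass to the ultraproduct termwise. Any formulation that right-multiplies an $L^2$-vector by $D_{\varphi_j}^{\frac{1}{2}}$ or by $a_j$ (which are unbounded, resp. non-commutant, operations) would fail to be compatible with the ultraproduct. The remaining ingredients — the uniform $L^1$/$L^2$ bounds above and the $\sigma$-weak density of $\{(a_j)_\omega\}$ — are routine.
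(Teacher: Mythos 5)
Your proof is correct and follows essentially the same route as the paper: both reduce to the inner-product formula for $\varphi_{y^\ast z}^{(-1/2)}(a)$ in terms of the GNS vectors $y D_\varphi^{\frac{1}{2}}, z D_\varphi^{\frac{1}{2}}$ (your left-multiplication form is just the $J$-conjugate of the paper's $\langle J a^\ast J \Lambda_\varphi(z), \Lambda_\varphi(y)\rangle$), then apply Lemma \ref{Lem=OpenLemma} to identify $(y_j D_{\varphi_j}^{\frac{1}{2}})_\omega = (y_j)_\omega D_\varphi^{\frac{1}{2}}$ and pass the pairing through the ultraproduct coordinatewise. The extra bookkeeping on uniform bounds and $\sigma$-weak density is implicit in the paper's three-line computation but adds nothing new.
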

 \begin{proof}
 We have using Lemma \ref{Lem=OpenLemma} for the second equality,
 \[
 \begin{split}
&  \langle  (\varphi_{y_j^\ast z_j}^{(-1/2)})_\omega, (x_j)_\omega   \rangle
= \langle J_\omega (x_j)_\omega^\ast J_\omega (y_j D_{\varphi_j}^{\frac{1}{2}})_\omega, (z_j D_{\varphi_j}^{\frac{1}{2}})_\omega \rangle \\
= &  \langle J_\omega (x_j)_\omega^\ast J_\omega (y_j)_\omega D_{\varphi}^{\frac{1}{2}} , (z_j)_\omega D_{\varphi}^{\frac{1}{2}}  \rangle
=  \langle   \varphi_{(y_j^\ast z_j)_\omega}^{(-1/2)} , (x_j)_\omega   \rangle.
 \end{split}
 \]
  \end{proof}

\begin{lem} \label{Lem=AnotherApproximation}
 Let $\varphi = (\varphi_j)_\omega$ be an ultrapower of normal, semi-finite, faithful weights $\varphi_j$. Assume that $\varphi$ is semifinite.  Let $z \in \nphi$. Then there exists a  net $z_k  = (z_{k, j})_\omega$ with $z_{k, j} \in \mathfrak{n}_{\varphi_j}$ such that  $\Vert z_{k, j} \Vert_{2, \varphi_j}$ is  bounded in $j$ and  $p_\varphi (z_{k, j})_\omega p_\varphi$ is bounded in $k$. Moreover  $p_\varphi (z_{k, j})_\omega p_\varphi \rightarrow  p_\varphi z p_\varphi$ strongly and $\Lambda_\varphi(p_\varphi (z_{k, j})_\omega p_\varphi ) \rightarrow \Lambda_\varphi( p_\varphi z p_\varphi)$ in norm.
\end{lem}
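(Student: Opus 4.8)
\emph{Proof proposal.} The plan is to decouple the statement into a single $L^2$-convergence claim and then realise the approximants as ultraproducts of \emph{uniformly bounded} elements. Write $\zeta := \Lambda_\varphi(p_\varphi z p_\varphi)$. Since $p_\varphi$ is the support of $\varphi$ we have $p_\varphi D_\varphi^{\frac12} = D_\varphi^{\frac12}$, so $\zeta = p_\varphi z D_\varphi^{\frac12} = p_\varphi \Lambda_\varphi(z)$. My first observation is that once I produce, for each $k$, an element $w_k := p_\varphi (z_{k,j})_\omega p_\varphi \in \nphi$ with $\sup_k \Vert w_k \Vert < \infty$ and $\Lambda_\varphi(w_k) \to \zeta$ in $L^2(\cM)$, the required strong convergence comes for free. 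Indeed $\Lambda_\varphi(w_k) = w_k D_\varphi^{\frac12}$, and because $\varphi$ is faithful on $p_\varphi \cM p_\varphi$ the vector $D_\varphi^{\frac12}$ is separating for $p_\varphi\cM p_\varphi$, equivalently $\overline{\cM' D_\varphi^{\frac12}} = p_\varphi\cH$. For $b\in\cM$ one has $\Vert (w_k - p_\varphi z p_\varphi)\, J b J\, D_\varphi^{\frac12}\Vert \le \Vert b\Vert\,\Vert \Lambda_\varphi(w_k) - \zeta\Vert \to 0$; since $w_k$ and $p_\varphi z p_\varphi$ are uniformly bounded and both annihilate $(1-p_\varphi)\cH$, strong convergence on this dense subspace of $p_\varphi\cH$ forces $w_k \to p_\varphi z p_\varphi$ strongly.

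So the real task is to build $z_{k,j}\in\mathfrak{n}_{\varphi_j}$ with the stated boundedness and with $(z_{k,j}D_{\varphi_j}^{\frac12})_\omega \to z D_\varphi^{\frac12}$ in $L^2(\cM)$; then applying $p_\varphi$ and Lemma \ref{Lem=OpenLemma} gives $\Lambda_\varphi(w_k) = p_\varphi(z_{k,j}D_{\varphi_j}^{\frac12})_\omega \to \zeta$. The building blocks are the net $\mathcal{F}$ of Definition \ref{Dfn=Ultraproduct}. For $\rho=(\rho_j)_\omega\in\mathcal{F}$ I set $c_j := [D_{\rho_j}^{\frac12} D_{\varphi_j}^{-\frac12}]$. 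As $\rho_j\le\varphi_j$ gives $D_{\rho_j}^{\frac12}\le D_{\varphi_j}^{\frac12}$, the operator $c_j$ is a contraction, and being $0$-homogeneous it lies in $\cM_j$; moreover $c_j D_{\varphi_j}^{\frac12} = D_{\rho_j}^{\frac12}$, so $c_j\in\mathfrak{n}_{\varphi_j}$ with $\Vert c_j\Vert_{2,\varphi_j}=\Vert\rho_j\Vert$. Next, since the Banach-space ultraproduct is $\sigma$-strongly dense in $\cM$, Kaplansky's theorem furnishes a net $a^{(l)}=(a^{(l)}_j)_\omega$ with $\Vert a^{(l)}\Vert\le\Vert z\Vert$ and $a^{(l)}\to z$ strongly. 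Put $z_{(\rho,l),j}:=a^{(l)}_j c_j\in\mathfrak{n}_{\varphi_j}$. Then $\Vert z_{(\rho,l),j}\Vert_{2,\varphi_j}\le \Vert a^{(l)}_j\Vert^2\Vert\rho_j\Vert$ is bounded in $j$, while $\Vert(z_{(\rho,l),j})_\omega\Vert = \Vert a^{(l)}(c_j)_\omega\Vert\le\Vert z\Vert$; so both boundedness requirements hold. By Lemma \ref{Lem=OpenLemma}, $(z_{(\rho,l),j}D_{\varphi_j}^{\frac12})_\omega = (a^{(l)}_j D_{\rho_j}^{\frac12})_\omega = a^{(l)} D_\rho^{\frac12}$.

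It then remains to converge to $z D_\varphi^{\frac12}$. For fixed $\rho$, strong convergence $a^{(l)}\to z$ evaluated at the single vector $D_\rho^{\frac12}$ gives $a^{(l)} D_\rho^{\frac12}\to z D_\rho^{\frac12}$ in $L^2$; and as $\rho$ increases through $\mathcal{F}$ (a net with supremum $\varphi$), the $L^2$-approximation lemma proved earlier in this subsection (the one just before Lemma \ref{Lem=Atom}) gives $\Vert z D_\rho^{\frac12} - z D_\varphi^{\frac12}\Vert_2\to 0$. Hence the iterated limit over $\mathcal{F}\times\{l\}$ equals $z D_\varphi^{\frac12}$, and a standard diagonal selection produces indices $(\rho_k,l_k)$ with $\Vert a^{(l_k)}D_{\rho_k}^{\frac12}-z D_\varphi^{\frac12}\Vert_2<1/k$. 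Setting $z_{k,j}:=z_{(\rho_k,l_k),j}$ completes the construction: the boundedness clauses hold as above (uniformly in $k$ for the operator norm, via $\Vert a^{(l_k)}\Vert\le\Vert z\Vert$), $\Lambda_\varphi(w_k)\to\zeta$ in norm, and by the first paragraph $w_k\to p_\varphi z p_\varphi$ strongly.

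The hard part, and the place the argument must be engineered, is the \emph{simultaneous} control of the two norms: a naive $L^2$-approximation of $\Lambda_\varphi(p_\varphi z p_\varphi)$ by GNS-vectors $\Lambda_{\varphi_j}(z_{k,j})$ puts no ceiling on the operator norms $\Vert z_{k,j}\Vert$, so $(z_{k,j})_\omega$ would not even define an element of $\cM$. The factorisation $D_{\rho_j}^{\frac12}=c_j D_{\varphi_j}^{\frac12}$ through the contractions $c_j$ is exactly what separates the two scales, keeping the operator norm below the Kaplansky bound $\Vert z\Vert$ while tying the $\varphi_j$-norm to $\Vert\rho_j\Vert$, which is bounded precisely because $\rho\in\mathcal{F}$. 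I would also verify the two harmless points that $p_\varphi z p_\varphi$ and each $w_k$ genuinely lie in $\nphi$ (using that $p_\varphi$ is the support of $\varphi$, so that $\varphi(y)=\varphi(p_\varphi y p_\varphi)$ for $y\in\cM^+$), which legitimises every $\Lambda_\varphi$-expression above.
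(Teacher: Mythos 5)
Your argument is correct in substance but takes a genuinely different route from the paper's. The paper first treats $z \in \mphi^+$: it introduces the net $\mathcal{A}$ of elements $(a_j)_\omega$ with $a_j \in \mathfrak{m}_{\varphi_j}^+$ and $\varphi^{(-1/2)}_{(a_j)_\omega} \leq \varphi_z^{(-1/2)}$, proves the supremum identity $\sup_{a \in \mathcal{A}} p_\varphi a p_\varphi = p_\varphi z p_\varphi$ by showing that any deficit $\widetilde{b}$ could be added back into $\mathcal{A}$, and then extracts the $\Lambda_\varphi$-convergence from normality of $\varphi$ applied to $\varphi((p_\varphi a p_\varphi)^2)$ and the cross terms; general $z \in \nphi$ is reached afterwards by splitting $\mphi$ into positive parts and inserting a multiplier net $e_l$. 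You instead handle arbitrary $z \in \nphi$ in one stroke through the factorisation $z_{k,j} = a_j^{(l_k)} c_j^{(k)}$ with $c_j = [D_{\rho_j}^{1/2} D_{\varphi_j}^{-1/2}]$ a contraction in $\mathfrak{n}_{\varphi_j}$ satisfying $\Lambda_{\varphi_j}(c_j) = D_{\rho_j}^{1/2}$: Kaplansky controls the operator norm, $\Vert \rho_j \Vert$ controls the $\varphi_j$-norm, and the unnumbered $L^2$-convergence lemma preceding Lemma \ref{Lem=Atom}, combined with a diagonal extraction over $\mathcal{F} \times \{l\}$, gives the GNS-convergence. This is a clean decoupling of the two norms that avoids both the positivity reduction and the supremum identity \eqref{Eqn=SupSet}; what the paper's version buys instead is a monotone (increasing) net of approximants, which your construction does not provide but which the statement does not require.

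One imprecision in your first paragraph needs repairing. Since $\varphi$ is a weight, $D_\varphi^{1/2}$ is not a vector of $\cH$, so neither $\overline{\cM' D_\varphi^{1/2}} = p_\varphi \cH$ nor the vectors $J b J D_\varphi^{1/2}$ make literal sense; moreover the honest substitute $\{ D_\varphi^{1/2} b^* \mid b \in \nphi \} = J \Lambda_\varphi(\nphi)$ has closed linear span only $q_\varphi \cH = p_\varphi J_\omega p_\varphi J_\omega \cH$, which is strictly smaller than $p_\varphi \cH$ in general, so as written you would only obtain strong convergence on $q_\varphi \cH$. The fix is routine: test on vectors $a' \Lambda_\varphi(y)$ with $a' \in \cM'$ and $y$ in the Tomita algebra of $\varphi$ restricted to $p_\varphi \cM p_\varphi$; then $(w_k - p_\varphi z p_\varphi) a' \Lambda_\varphi(y) = a' J_\omega \sigma^{\varphi}_{i/2}(y)^* J_\omega (\Lambda_\varphi(w_k) - \zeta)$ (c.f. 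Lemma \ref{Lem=StandardTomTak}), which tends to $0$ in norm, and $\overline{\cM' q_\varphi \cH} = p_\varphi \cH$ because $q_\varphi \cH$ is separating for $p_\varphi \cM p_\varphi$. Uniform boundedness of the $w_k$ then upgrades this to strong convergence on $p_\varphi \cH$, and both $w_k$ and $p_\varphi z p_\varphi$ vanish on $(1 - p_\varphi)\cH$. With this repair, and the observation you already make that $y p_\varphi \in \nphi$ whenever $y \in \nphi$ (since $\varphi(x) = \varphi(p_\varphi x p_\varphi)$ for $x \geq 0$), the proof is complete.
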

\begin{proof}
To start with take $z \in \mphi^+$.  Consider the set $\mathcal{A}$ of elements $a = (a_j)_\omega$ with $a_j \in \mathfrak{m}_{\varphi_j}^+$ and such that $\Vert \sqrt{a_j} \Vert_{2, \varphi_j}$ is bounded and $\varphi_{(a_j)_\omega}^{(-1/2)} \leq \varphi_z^{(-1/2)}$. The same proof as in Theorem \ref{Thm=NetStructure} shows that $\mathcal{A}$ is a net with respect to the order on positive elements. We claim that
\begin{equation}\label{Eqn=SupSet}
\sup_{a \in \mathcal{A}} p_\varphi a p_\varphi = p_\varphi z p_\varphi.
\end{equation}
Indeed let $y$ be such that $\sup_{a \in \mathcal{A}} p_\varphi a p_\varphi = p_\varphi y p_\varphi$. Note that $p_\varphi y p_\varphi \leq p_\varphi z p_\varphi$ so that $p_\varphi y p_\varphi \in \mphi^+$.   As the weights $\varphi_j$ are normal semi-finite and faithful, the functionals $\varphi_{j, b}^{(-1/2)}$ with $b \in \mathfrak{m}_{\varphi_j}^+$ are dense in $(\cM_j)_\ast^+$. Therefore we may find $b_j \in \mathfrak{m}_{\varphi_j}^+$ such that $\varphi_{z-y}^{(-1/2)} = (  \varphi_{j, b_j}^{(-1/2)}   )_\omega$. We may assume moreover that $\Vert \varphi_{j, b_j}^{(-1/2)}  \Vert \leq \Vert \varphi_{z-y}^{(-1/2)} \Vert$. That is $\varphi_j(b_j) \leq \Vert \varphi_{z-y}^{(-1/2)} \Vert$. Then set $\widetilde{b}_j = b_j(1+b_j)^{-1}$. We get that $\widetilde{b}_j \leq b_j$ and $\Vert \sqrt{\widetilde{b}_j} \Vert_{2, \varphi_j} \leq \varphi_j(b_j)$ is bounded. Set $\widetilde{b} = ( \widetilde{b}_j)_\omega$. In particular this shows using Lemma \ref{Lem=SymmetricUltra} that $\varphi_{\widetilde{b}}^{(-1/2)} = \varphi_{j, (\widetilde{b}_j)_\omega}^{(-1/2)} \leq  (\varphi_{j,  b_j}^{(-1/2)})_\omega  = \varphi_{z-y}^{(-1/2)}$. We conclude that $\widetilde{b} := (\widetilde{b}_j)_\omega \in \mathcal{A}$. This also implies that $p_\varphi \widetilde{b} p_\varphi \leq p_\varphi (z-y) p_\varphi$.   But then for every $a \in \mathcal{A}$ we have
\[
p_\varphi (a+\widetilde{b}) p_\varphi \leq p_\varphi y p_\varphi + p_\varphi (z-y) p_\varphi = p_\varphi z p_\varphi.
\]
We see that for every $a \in \mathcal{A}$ we have $a+\widetilde{b} \in \mathcal{A}$. But this can only happen if $\widetilde{b} = 0$ in which case $p_\varphi (z-y) p_\varphi = 0$. This concludes \eqref{Eqn=SupSet}. As suprema are attained in the strong toplogy we get,
\[
\lim_{a \in \mathcal{A}} p_\varphi a p_\varphi = p_\varphi z p_\varphi.
\]
We also get that,
\begin{equation}\label{Eqn=ToZero}
       \Vert \Lambda_\varphi( p_\varphi z p_\varphi - p_\varphi a p_\varphi  ) \Vert_2^2
= \varphi( (p_\varphi z p_\varphi)^2) +   \varphi( (p_\varphi a p_\varphi)^2) -
\varphi( p_\varphi z p_\varphi a p_\varphi) - \varphi( p_\varphi a p_\varphi z p_\varphi)
\end{equation}
Note that $\sqrt{p_\varphi z p_\varphi} \in \nphi$ and we may write
\[
p_\varphi a p_\varphi = (p_\varphi z p_\varphi)^{1/2}
( (p_\varphi z p_\varphi)^{-1/2}   (p_\varphi a p_\varphi)   (p_\varphi z p_\varphi)^{-1/2} )
(p_\varphi z p_\varphi)^{1/2},
\]
and  $x_a:= (p_\varphi z p_\varphi)^{-1/2}   (p_\varphi a p_\varphi)   (p_\varphi z p_\varphi)^{-1/2}$ is increasing over $a \in \mathcal{A}$ with supremum equal to the support of $p_\varphi z p_\varphi$. By normality of $\varphi$ we see that,
\[
\begin{split}
& \varphi( (p_\varphi a p_\varphi)^2) \rightarrow \varphi( (p_\varphi z p_\varphi)^2), \qquad \textrm{ and }\\
& \varphi( p_\varphi z p_\varphi a p_\varphi) =
 \varphi( (p_\varphi z p_\varphi)^{\frac{3}{2}}   x_a  (p_\varphi z p_\varphi)^{\frac{1}{2}}    ) \rightarrow \varphi( (p_\varphi z p_\varphi)^2).
 \end{split}
\]
Similarly $\varphi( p_\varphi a p_\varphi z p_\varphi) \rightarrow \varphi( (p_\varphi z p_\varphi)^2)$. So \eqref{Eqn=ToZero} converges to 0.

Next we need to prove the lemma for general $z \in \nphi$ through a standard core argument. Firstly note that as every element in $\mphi$ can be written as the sum of four elements in $\mphi^+$, the result follows for $\mphi$.
Now take $z \in p_\varphi \nphi p_\varphi$. Let $\{e_l\}_l$ be a bounded net in $p_\varphi \nphi p_\varphi$ of positive elements approximating 1 in the strong topology. Note that $e_l z \in \mphi$. We have $e_l z \rightarrow z$ strongly and $\Lambda_\varphi( p_\varphi e_l z p_\varphi ) \rightarrow \Lambda(p_\varphi z p_\varphi)$ in norm. In turn the beginning of the proof shows that we may approximate $p_\varphi e_l z p_\varphi$  in the graph norm of $\Lambda_\varphi$ with bounded nets of elements $p_\varphi z_k p_\varphi = p_\varphi (z_{k,j})_\omega p_\varphi$ such that $\Vert z_{k,j} \Vert_{2, \varphi_j}$ is bounded in $j$ and $p_\varphi (z_{k,j})_\omega p_\varphi$ is bounded in $k$.
\end{proof}

\begin{lem}\label{Lem=SubtleLemma}
 Let $\varphi = (\varphi_j)_\omega$ be an ultrapower of normal, semi-finite, faithful weights $\varphi_j$. Assume that $\varphi$ is semifinite.   Let $p_\varphi$ be the support of $\varphi$. Then there exists a bounded net $z_k = (z_{k, j})_\omega$ on $\cM$ such that $z_k \rightarrow p_\varphi$ in the $\sigma$-weak topology and such that $z_{k,j}$ are analytic for $\sigma^{\varphi_j}$ with  $\sigma_{i/2}^{\varphi_j}(z_{k, j})$   bounded (in both $j$ and $k$) and with $(\sigma_{i/2}^{\varphi_j}(z_{k, j}) )_\omega \rightarrow p_\varphi$ in the $\sigma$-weak topology.
\end{lem}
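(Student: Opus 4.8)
The plan is to produce the net by a \emph{fixed-width} Gaussian regularisation of an approximate unit; it is precisely the fixed (rather than shrinking) width that keeps the $\sigma^{\varphi_j}_{i/2}$-images bounded uniformly in both indices. Let $(e_{k,j})_\omega$ be the increasing net of Lemma~\ref{Lem=TakingTheSup}, with $e_{k,j}\in\mathfrak{m}_{\varphi_j}^+$, $e_{k,j}\le 1$ and $(e_{k,j})_\omega\nearrow 1$ $\sigma$-weakly, and set
\[
z_{k,j}=\frac{1}{\sqrt{\pi}}\int_{\mathbb{R}}e^{-t^2}\,\sigma^{\varphi_j}_t(e_{k,j})\,dt .
\]
Each $z_{k,j}$ is entire for $\sigma^{\varphi_j}$ with $\Vert z_{k,j}\Vert\le 1$, and a contour shift gives $\sigma^{\varphi_j}_{i/2}(z_{k,j})=\frac{1}{\sqrt{\pi}}\int e^{-(t-i/2)^2}\sigma^{\varphi_j}_t(e_{k,j})\,dt$, so that $\Vert\sigma^{\varphi_j}_{i/2}(z_{k,j})\Vert\le e^{1/4}$, bounded in both $k$ and $j$. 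Since the kernels $\frac{1}{\sqrt\pi}e^{-t^2}$ and $\frac{1}{\sqrt\pi}e^{-(t-i/2)^2}$ both have integral $1$, the two convergences demanded by the lemma are instances of one statement: for every Schwartz kernel $f$ with $\int f=1$, the net $\bigl(\int f(t)\sigma^{\varphi_j}_t(e_{k,j})\,dt\bigr)_\omega$ tends $\sigma$-weakly to $p_\varphi$. I therefore reduce everything to this single convergence.

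To analyse it I write $\Delta_j$ for the modular operator of $\varphi_j$ on $\cH_j$, so that $\sigma^{\varphi_j}_t=\Ad\Delta_j^{it}$, and let $\Delta=(\Delta_j)_\omega$ act on $\cH$. Pairing against vectors $\xi=(\xi_j)_\omega,\eta=(\eta_j)_\omega$ and using $\sigma^{\varphi_j}_t(e_{k,j})=\Delta_j^{it}e_{k,j}\Delta_j^{-it}$ turns the matrix coefficient into $\int f(t)\langle e_{k,j}\Delta_j^{-it}\xi_j,\Delta_j^{-it}\eta_j\rangle\,dt$. On the spectral subspace $\mathcal{K}=\bigcup_{\lambda>0}\chi_{(1/\lambda,\lambda)}(\Delta)\cH$ of Lemma~\ref{Thm=SpectralStuff} the unitaries $(\Delta_j^{it})_\omega$ restrict to $\Delta^{it}$, which is $\sigma$-strongly continuous in $t$, so the ultrafilter limit commutes with the dominated $t$-integral; together with $(e_{k,j})_\omega\to 1$ this shows that on $\overline{\mathcal{K}}$ the net acts as $\int f(t)\Delta^{it}(\cdot)\Delta^{-it}\,dt$ applied to the limit $1$, that is, as the identity (the smoothing of $1$ is $1$ because $\int f=1$). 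The genuinely different behaviour is on $\overline{\mathcal{K}}^{\perp}$, where $\Delta_j$ escapes to $0$ or $\infty$: there the phases $\Delta_j^{-it}$ oscillate so fast that the Fourier transform $\widehat f$ of the rapidly decaying kernel forces the $t$-integral to vanish in the ultralimit, so the net annihilates $\overline{\mathcal{K}}^{\perp}$ and the limit operator is the projection $\chi_{\overline{\mathcal{K}}}$.

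The main obstacle is exactly this last point, together with the identification $\chi_{\overline{\mathcal{K}}}=p_\varphi$. The interchange of $\lim_{j,\omega}$ with $\int\,dt$ is \emph{not} valid off $\mathcal{K}$ — it already fails for $\Delta_j^{it}=e^{ijt}$ — and it is precisely this failure, quantified by the decay of $\widehat f$ along the escaping spectrum, that removes the mass which $1$ carries outside the support. To make the heuristic rigorous I would estimate $\int f(t)\langle e_{k,j}\Delta_j^{-it}\xi_j,\Delta_j^{-it}\eta_j\rangle\,dt$ through the double spectral measure of $\Delta_j$, using that $\int f(t)(\lambda/\mu)^{-it}\,dt=\widehat f(\log(\lambda/\mu))$ decays in $\vert\log(\lambda/\mu)\vert$, and verify that for $\xi,\eta\in\overline{\mathcal{K}}^{\perp}$ the relevant mass concentrates where $\vert\log(\lambda/\mu)\vert\to\infty$. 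Finally the equality $\chi_{\overline{\mathcal{K}}}=p_\varphi$ I would obtain by reconciling the spectral description of the limit with the pinched convergence $p_\varphi(z_{k,j})_\omega p_\varphi\to p_\varphi$, which follows from Lemmas~\ref{Lem=OpenLemma} and~\ref{Lem=AnotherApproximation} and forces the limit projection to dominate $p_\varphi$, while semi-finiteness of $\varphi$ through Lemma~\ref{Lem=TakingTheSup} (identifying $\overline{\mathcal{K}}$ with the closure of the $\Lambda_\varphi$-images) pins it down to $p_\varphi$. This yields both $z_k\to p_\varphi$ and $(\sigma^{\varphi_j}_{i/2}(z_{k,j}))_\omega\to p_\varphi$ in the $\sigma$-weak topology.
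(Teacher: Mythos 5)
Your Gaussian regularisation is the same device the paper uses, and the bounds $\Vert z_{k,j}\Vert\le 1$, $\Vert\sigma^{\varphi_j}_{i/2}(z_{k,j})\Vert\le e^{1/4}$ are correct. But the argument has a fatal flaw at its starting point: you smooth the net $(e_{k,j})_\omega\nearrow 1$ from Lemma \ref{Lem=TakingTheSup}, whose $\sigma$-weak limit is $1$, not $p_\varphi$. A direct dual computation shows the smoothed net still converges to $1$: for $\rho=(\rho_j)_\omega\in\cM_\ast$ and a kernel $f$ with $\int f=1$, set $\tilde\rho_j=\int f(t)\,\rho_j\circ\sigma^{\varphi_j}_t\,dt$ (a bounded sequence in $(\cM_j)_\ast$, since $t\mapsto\rho_j\circ\sigma_t^{\varphi_j}$ is norm-continuous); then $\langle\rho,(z_{k,j})_\omega\rangle=\langle(\tilde\rho_j)_\omega,(e_{k,j})_\omega\rangle\to\langle(\tilde\rho_j)_\omega,1\rangle=\lim_{j,\omega}\rho_j(1)=\rho(1)$. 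So $z_k\to 1$ $\sigma$-weakly, and your construction can only work when $p_\varphi=1$, whereas the whole point of the lemma is that $\varphi=(\varphi_j)_\omega$ is generally not faithful. The mechanism you invoke to kill $1-p_\varphi$ --- decay of $\widehat f(\log(\lambda/\mu))$ off the spectral subspace $\mathcal{K}$ --- cannot rescue this: the relevant mass in the double spectral measure of $\Delta_j$ can sit on the diagonal $\lambda\approx\mu$ while escaping to $0$ or $\infty$. Concretely, if $\xi_j$ is an (approximate) eigenvector of $\Delta_j$ with eigenvalue $j$, then $\int f(t)\langle e_{k,j}\Delta_j^{-it}\xi_j,\Delta_j^{-it}\xi_j\rangle\,dt=\langle e_{k,j}\xi_j,\xi_j\rangle$ with no oscillation and no decay. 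You flag this step yourself as unfinished; it is not merely unfinished but false, and the identification $\chi_{\overline{\mathcal{K}}}=p_\varphi$ is likewise unsubstantiated.

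The missing idea is the one the paper's proof turns on: first prove that $p_\varphi$ is a fixed point of every $(\sigma^{\varphi_j}_t)_\omega$. This follows from Lemma \ref{Lem=OpenLemma}, which gives $(\Delta_j^{it})_\omega\,(D_\varphi^{1/2}x)=D_\varphi^{1/2}(\sigma^{\varphi_j}_t(x_j))_\omega$ on a dense subset of $p_\varphi\cH$, hence $(\Delta_j^{it})_\omega p_\varphi\cH=p_\varphi\cH$ and $(\sigma_t^{\varphi_j})_\omega(p_\varphi)=p_\varphi$. With this in hand one takes, by Kaplansky, a bounded net $y_k=(y_{k,j})_\omega\to p_\varphi$ $\sigma$-weakly (not a net converging to $1$), applies the same Gaussian smoothing, and the dual computation above gives $\langle\rho,(z_{k,j})_\omega\rangle\to\langle(\tilde\rho_j)_\omega,p_\varphi\rangle=\langle\rho,\int f(t)(\sigma_t^{\varphi_j})_\omega(p_\varphi)\,dt\rangle=\langle\rho,p_\varphi\rangle$, the last equality by the invariance. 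The same computation with the shifted kernel $e^{-(t-i/2)^2}/\sqrt{\pi}$ handles $(\sigma^{\varphi_j}_{i/2}(z_{k,j}))_\omega$. Note that the invariance of $p_\varphi$ is indispensable even once the starting net is corrected: smoothing moves the limit from $p_\varphi$ to $\int f(t)(\sigma_t^{\varphi_j})_\omega(p_\varphi)\,dt$, and only the fixed-point property returns this to $p_\varphi$.
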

\begin{proof}
Firstly for $t \in \mathbb{R}$ consider the ultraproduct of $\ast$-homomorphisms $(\sigma_t^{\varphi_j})_\omega$. For $x \in \cM$ we have $\langle  (\rho_j)_\omega, (\sigma_t^{\varphi_j})_\omega(x) \rangle = \langle   (\rho_j \circ \sigma_t^{\varphi_j})_\omega, x \rangle$. Note that $t \mapsto (\sigma_t^{\varphi_j})_\omega$ is not necessarily strongly continuous (in fact in \cite{AndoHaagerup} it is proved it is usually not) but it is measurable in the sense that for $f \in L^1(\mathbb{R})$ we define $\int_\mathbb{R} f(t) (\sigma_t^{\varphi_j})_\omega(x) dt$ as the element determined by $\langle  (\rho_j)_\omega, \int_\mathbb{R} f(t) (\sigma_t^{\varphi_j})_\omega(x) dt \rangle = \langle  ( \int_{\mathbb{R}} f(t) \rho_j \circ \sigma^{\varphi_j}_t)_\omega, x \rangle$.
 
 Now we claim that $p_\varphi$ is invariant for this homomorphism. Indeed, take $x = (x_j)_\omega \in \cM$ with $\Vert x_j \Vert$ and $\Vert x_j^\ast \Vert_{2, \varphi}$ bounded.  Then, applying Lemma \ref{Lem=OpenLemma} twice,
\[
(\Delta_{j}^{it})_\omega (D_{\varphi}^{\frac{1}{2}} x) =
(\Delta_{j}^{it})_\omega (D_{\varphi_j}^{\frac{1}{2}} x_j)_\omega=
(D_{\varphi_j}^{\frac{1}{2}} \sigma^{\varphi_j}_t( x_j))_\omega =
D_{\varphi}^{\frac{1}{2}} (\sigma^{\varphi_j}_t( x_j))_\omega.
\]
Hence $(\Delta^{it}_j)_\omega (D_\varphi^{\frac{1}{2}} x ) \subseteq p_\varphi \cH$. Since $((\Delta^{it}_j)_\omega)^{-1} = (\Delta^{-it}_j)_\omega$ we see that actually $(\Delta^{it}_j)_\omega p_\varphi \cH = p_\varphi \cH$. Then it follows that $(\sigma^{\varphi_j}_t)_\omega(p_\varphi) = (\Delta^{it}_j)_\omega p_\varphi (\Delta^{-it}_j)_\omega = p_\varphi$.

By Kaplansky's density theorem let $y_k  = (y_{k, j})_\omega$ be a bounded net converging in the $\sigma$-weak topology to $p_\varphi$. We set $z_{k,j} = \frac{1}{\sqrt{\pi}} \int_{-\infty}^{\infty} e^{-t^2} \sigma_t^{\varphi_j}(y_{k,j}) dt$. Then we have $\sigma^{\varphi_j}_v( z_{k,j} ) = \frac{1}{\sqrt{\pi}} \int_{-\infty}^{\infty} e^{-(t-v)^2} \sigma_t^{\varphi_j}(y_{k,j}) dt$ which is bounded. Also, for $\rho = (\rho_j)_\omega \in \cM_\ast$,
\[
\begin{split}
 \langle \rho, (z_{k,j})_\omega \rangle = &
\langle (  \frac{1}{\sqrt{\pi}} \int_{-\infty}^{\infty} e^{-t^2} \rho_j \circ \sigma_t^{\varphi_j}( \: \cdot \:) dt)_\omega, (y_{k,j})_\omega \rangle \\
\rightarrow &
\langle (  \frac{1}{\sqrt{\pi}} \int_{-\infty}^{\infty} e^{-t^2} \rho_j \circ \sigma_t^{\varphi_j}( \: \cdot \:) dt)_\omega, p_\varphi \rangle \\
= & \langle  (\rho_j)_\omega, \int_{-\infty}^\infty e^{-t^2} (\sigma^{\varphi_j}_t)_\omega(p_\varphi) dt \rangle \\
= & \langle  \rho, p_\varphi \rangle,
\end{split}
\]
so that $ (z_{k,j})_\omega \rightarrow p_\varphi$ $\sigma$-weakly. Similarly,
\[
\begin{split}
\langle \rho, ( \sigma^{\varphi_j}_{i/2} (z_{k,j})   )_\omega \rangle = &
\langle (  \frac{1}{\sqrt{\pi}} \int_{-\infty}^{\infty} e^{-(t-i/2)^2} \rho_j \circ \sigma_t^{\varphi_j}( \: \cdot \:) dt)_\omega, (y_{k,j})_\omega \rangle \\
\rightarrow &
\langle (  \frac{1}{\sqrt{\pi}} \int_{-\infty}^{\infty} e^{-(t-i/2)^2} \rho_j \circ \sigma_t^{\varphi_j}( \: \cdot \:) dt)_\omega, p_\varphi \rangle = \langle \rho, p_\varphi \rangle.
\end{split}
\]
This concludes the proof.
\end{proof}

\begin{lem}\label{Lem=StandardTomTak}
Let $\varphi$ be a normal, semi-finite, faithful weight on a von Neumann algebra $\cN$. Let $x \in \nphi$. Let $a \in\cN$ be analytic for $\sigma^{\varphi}$. Then $x a \in \nphi$ and $\Vert x a \Vert_{2, \varphi} \leq \Vert \sigma^{\varphi}_{i/2}( a) \Vert \Vert x \Vert_{2, \varphi}$.
\end{lem}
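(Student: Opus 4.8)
The plan is to realise right multiplication by $a$ as a bounded operator in the commutant $\cN'$. Set $b = \sigma^\varphi_{i/2}(a)$; since $a$ is analytic, $b \in \cN$ with $\Vert b \Vert = \Vert \sigma^\varphi_{i/2}(a) \Vert$, and because $J \cN J = \cN'$ the operator $R := J b^* J$ lies in $\cN'$ with $\Vert R \Vert = \Vert \sigma^\varphi_{i/2}(a) \Vert$. The core of the argument is the identity
\[
\Lambda_\varphi(xa) = J \sigma^\varphi_{i/2}(a)^* J\, \Lambda_\varphi(x) = R\,\Lambda_\varphi(x), \qquad x \in \nphi.
\]
Granting this, both assertions follow at once: the right-hand side is a well-defined vector of $\cH_\varphi$, so $xa \in \nphi$, and since $\Vert xa \Vert_{2,\varphi} = \Vert \Lambda_\varphi(xa) \Vert$ we obtain $\Vert xa \Vert_{2,\varphi} = \Vert R \Lambda_\varphi(x) \Vert \le \Vert R \Vert\, \Vert \Lambda_\varphi(x) \Vert = \Vert \sigma^\varphi_{i/2}(a) \Vert\, \Vert x \Vert_{2,\varphi}$, which is the claimed estimate.

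I would first verify the identity on the Tomita algebra $\mathcal{T}_\varphi$, which is a core for $\Lambda_\varphi$. For $x \in \mathcal{T}_\varphi$ one has the explicit action $J\Lambda_\varphi(y) = \Lambda_\varphi(\sigma^\varphi_{-i/2}(y^*))$, the left action $c\,\Lambda_\varphi(w) = \Lambda_\varphi(cw)$, and the homomorphism and group laws for $\sigma^\varphi$. Chaining these gives
\[
R\Lambda_\varphi(x) = J b^* \Lambda_\varphi(\sigma^\varphi_{-i/2}(x^*)) = J\Lambda_\varphi(b^*\sigma^\varphi_{-i/2}(x^*)) = \Lambda_\varphi\big(\sigma^\varphi_{-i/2}(\sigma^\varphi_{i/2}(x)\, b)\big) = \Lambda_\varphi(x\,\sigma^\varphi_{-i/2}(b)),
\]
and $\sigma^\varphi_{-i/2}(b) = \sigma^\varphi_{-i/2}(\sigma^\varphi_{i/2}(a)) = a$, so $R\Lambda_\varphi(x) = \Lambda_\varphi(xa)$. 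To pass to a general $x \in \nphi$ I would pick a net $x_n \in \mathcal{T}_\varphi$ with $\Vert x_n \Vert$ bounded, $x_n \to x$ $\sigma$-strongly and $\Lambda_\varphi(x_n) \to \Lambda_\varphi(x)$ in norm (possible as $\mathcal{T}_\varphi$ is a core). Then $x_n a \to xa$ $\sigma$-strongly while $\Lambda_\varphi(x_n a) = R\Lambda_\varphi(x_n) \to R\Lambda_\varphi(x)$, so closedness of $\Lambda_\varphi$ yields $xa \in \nphi$ and the identity.

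The main obstacle is that the computation on $\mathcal{T}_\varphi$ is only formally clean: at the steps where $J$ and $\Lambda_\varphi$ are applied to the product $b^*\sigma^\varphi_{-i/2}(x^*)$ one must already know that right multiplication by the analytic elements $b,a$ keeps us inside $\nphi$ -- which is essentially the statement being proved, so a naive reading is circular. The genuinely modular-theoretic content is the identification of $R = J\sigma^\varphi_{i/2}(a)^* J \in \cN'$ as the operator implementing right multiplication by $a$ on $\Lambda_\varphi(\nphi)$, and this is not delivered by elementary manipulation of the $J$-formula alone. The clean way to secure it is the left Hilbert/Tomita algebra machinery of \cite{TakII}: $\Lambda_\varphi(x)$ is a left-bounded vector, $R$ commutes with the left action of $\cN$, and for analytic $a$ one checks that $R\Lambda_\varphi(x)$ is again left-bounded with associated left-multiplication operator $xa$. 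This gives $xa \in \nphi$ together with the identity without circularity, after which the norm estimate above concludes the proof.
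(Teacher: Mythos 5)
Your proof is correct and takes essentially the same route as the paper's: both rest on the identity $\Lambda_\varphi(xa) = J\sigma^{\varphi}_{-i/2}(a^\ast)J\,\Lambda_\varphi(x)$ (your $R = J\sigma^{\varphi}_{i/2}(a)^\ast J$ is the same operator), and the paper secures the membership $xa \in \nphi$ together with this identity by citing \cite[Lemma VIII.2.4]{TakII} --- exactly the left Hilbert algebra step you correctly identify as the non-circular core of the argument. The norm bound then follows in both treatments from $\Vert J\sigma^{\varphi}_{i/2}(a)^\ast J\Vert = \Vert \sigma^{\varphi}_{i/2}(a)\Vert$.
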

\begin{proof}
That $xa \in \nphi$ follows from \cite[Lemma VIII.2.4]{TakII}. We have
\[
\begin{split}
& \langle \Lambda_{\varphi}(xa), \Lambda_{\varphi}(x a) \rangle =
\langle J \sigma^{\varphi}_{-i/2}(a^\ast) J \Lambda_{\varphi}(x),  J \sigma^{\varphi}_{-i/2}(a^\ast) J \Lambda_{\varphi}(x) \rangle \\
= & \langle J \sigma^{\varphi}_{i/2}(a) \sigma^{\varphi}_{i/2}(a)^\ast J \Lambda_{\varphi}(x), \Lambda_{\varphi}(x) \rangle \leq \Vert \sigma^{\varphi}_{i/2}( a) \Vert^2 \Vert x \Vert_{2, \varphi}^2.
\end{split}
\]
\end{proof}

\begin{thm}\label{Thm=OneMoreSpatial}
Let $\varphi_j$ be normal, semi-finite, faithful weights on $\cM_j$ with ultraproduct $\varphi = (\varphi_j)_\omega$. Let $\psi_j$ be normal, semi-finite, faithful weights on $\cM_j'$ with ultraproduct $\psi = (\psi_j)_\omega$. Let $p_\varphi$ be the support of $\varphi$ and let $r_\psi$ be the support of $\psi$. Put $p_\psi = J_\omega r_\psi J_\omega$ and $q_\psi = p_\psi r_\psi$. Assume that both $\varphi$ and $\psi$ are semi-finite and $p_\varphi \leq p_\psi$. Then we have,
\[
q_\psi \left( \frac{d\varphi_j}{ d \psi_j}  \right)_\omega q_\psi = \frac{d \varphi}{ d\psi}.
\]
Furthermore $q_\psi$ and $\left( \frac{d\varphi_j}{ d \psi_j}  \right)_\omega$ commute.
\end{thm}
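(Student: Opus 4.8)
The plan is to prove the identity by comparing the quadratic forms of the two positive self-adjoint operators involved, and to obtain the commutation statement from the localisation of the imaginary powers of $A:=\left(\frac{d\varphi_j}{d\psi_j}\right)_\omega$. Write $A_j=\frac{d\varphi_j}{d\psi_j}$, so that by \eqref{Eqn=SpacialModular} the unitaries $A_j^{it}$ implement $\sigma_t^{\varphi_j}$ on $\cM_j$ and $\sigma_{-t}^{\psi_j}$ on $\cM_j'$. The Appendix \ref{Sect=AppendixA} descriptions of spatial derivatives in terms of $L^2$-densities, together with Lemma \ref{Lem=OpenLemma} (which localises the vectors $(z_j)_\omega D_\varphi^{1/2}=(z_j D_{\varphi_j}^{1/2})_\omega$), are what make the two sides comparable; the projection $q_\psi$ and the cut-off $P_{<\infty}(A)$ of Lemma \ref{Lem=UltraOfElements} are there precisely to absorb the places where the naive ultraproduct loses mass.

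First I would dispose of the commutation of $q_\psi=p_\psi r_\psi$ with $A$, since it is what allows $q_\psi A q_\psi$ to be read as a positive self-adjoint operator on $q_\psi\cH$. By Lemma \ref{Thm=SpectralStuff} the imaginary powers localise as $A^{it}\vert_{\cK}=(A_j^{it})_\omega\vert_{\cK}$ on $\cK=\bigcup_{\lambda>0}\chi_{(1/\lambda,\lambda)}(A)\cH$, and the genuine unitaries $(A_j^{it})_\omega$ implement $(\sigma_t^{\varphi_j})_\omega$ on $\cM$ and $(\sigma_{-t}^{\psi_j})_\omega$ on $\cM'$. Since $r_\psi$ is the support of $\psi$, the argument of Lemma \ref{Lem=SubtleLemma} applied verbatim to the weights $\psi_j$ on the commutants gives $(\sigma_{-t}^{\psi_j})_\omega(r_\psi)=r_\psi$, so $(A_j^{it})_\omega$ commutes with $r_\psi$. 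For $p_\psi=J_\omega r_\psi J_\omega\in\cM$ one must show $(\sigma_t^{\varphi_j})_\omega(p_\psi)=p_\psi$; this is the point at which the hypothesis $p_\varphi\le p_\psi$ is used, the inequality forcing the relevant supports to commute with the $\varphi$-modular data. Combining the two invariances shows that $(A_j^{it})_\omega$, hence the spectral projections of $A$, commute with $q_\psi$.

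Next I would match the forms. Using Lemma \ref{Lem=AnotherApproximation} I would produce a core of vectors $\xi=(\xi_j)_\omega\in q_\psi\cH$ with $\xi_j$ in $D(\cH_j,\psi_j)$, with $R^{\psi_j}(\xi_j)$ bounded uniformly in $j$, and with $\sup_j\|A_j^{1/2}\xi_j\|_2<\infty$. On such vectors, the $\psi$-analogue of Lemma \ref{Lem=OpenLemma} identifies $R^\psi(\xi)$ with the ultraproduct of the $R^{\psi_j}(\xi_j)$, whence $\theta^\psi(\xi,\xi)$ is the $q_\psi$-compression of $(\theta^{\psi_j}(\xi_j,\xi_j))_\omega$ and $q_\varphi(\xi)=\langle\varphi,\theta^\psi(\xi,\xi)\rangle$ can be computed. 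On the other hand equation \eqref{Eqn=PIsNeeded} of Lemma \ref{Lem=UltraOfElements} gives $A^{1/2}\xi=P_{<\infty}(A)(A_j^{1/2}\xi_j)_\omega$, so that $\langle q_\psi A q_\psi\,\xi,\xi\rangle$ equals the localised value $\lim_{j,\omega}\langle A_j\xi_j,\xi_j\rangle=\lim_{j,\omega}\langle\varphi_j,\theta^{\psi_j}(\xi_j,\xi_j)\rangle$. Finally I would check, through the density statement of Lemma \ref{Lem=TakingTheSup} and the convergence estimates already used in the proof of Theorem \ref{Thm=NetStructure}, that these two limits coincide on the chosen core, which pins down $q_\psi A q_\psi=\frac{d\varphi}{d\psi}$.

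The hard part will be turning the inequality \eqref{Eqn=SubtleChange} into an equality on the right class of vectors, that is, controlling the interplay between the cut-off $P_{<\infty}(A)$ and the compression $q_\psi$. Concretely, I expect the main obstacle to be verifying that the core supplied by Lemma \ref{Lem=AnotherApproximation} actually lands in $P_{<\infty}(A)\cH\cap q_\psi\cH$ and is simultaneously a form-core for both $q_\psi A q_\psi$ and $\frac{d\varphi}{d\psi}$, so that the two quadratic forms may be compared without losing or gaining mass at infinity. The support hypothesis $p_\varphi\le p_\psi$ should be exactly what guarantees that no such mass is lost, mirroring the necessity of $P_{<\infty}(A)$ illustrated after Lemma \ref{Lem=UltraOfElements}.
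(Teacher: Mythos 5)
Your proposal identifies the right supporting lemmas but leaves the two decisive steps unproved, and in both cases the route you sketch would not close.

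First, the commutation step. You want to deduce that $q_\psi$ commutes with $A=\bigl(\tfrac{d\varphi_j}{d\psi_j}\bigr)_\omega$ from the localisation of imaginary powers in Lemma \ref{Thm=SpectralStuff}. That lemma only applies on $\cK=\bigcup_{\lambda>0}\chi_{(1/\lambda,\lambda)}(A)\cH$, so to use it you must already know that $q_\psi\cH$ avoids both the infinite part and the kernel of $A$. The former ($q_\psi\le P_{<\infty}(A)$) is one of the things being proved, and the latter is generally false under the hypothesis $p_\varphi\le p_\psi$ (the support of $\tfrac{d\varphi}{d\psi}$ is $p_\varphi$, which may be strictly smaller than $q_\psi$); it only becomes available in Theorem \ref{Thm=Modular}, where $p_\varphi=p_\psi$. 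Moreover your assertion that $(\sigma_t^{\varphi_j})_\omega(p_\psi)=p_\psi$ because ``$p_\varphi\le p_\psi$ forces the supports to commute with the $\varphi$-modular data'' is not an argument: Lemma \ref{Lem=SubtleLemma} gives invariance of $p_\varphi$, not of $p_\psi$. The paper does not argue this way at all; it proves invariance of $q_\psi(\cH)$ under $A^{1/2}$ at the very end by a direct vector computation with the analytic nets of Lemma \ref{Lem=SubtleLemma}, after $q_\psi\le P_{<\infty}(A)$ has been established.

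Second, the form comparison. Your plan reduces to showing
$\langle(\varphi_j)_\omega,(R^{\psi_j}(\xi_j)R^{\psi_j}(\xi_j)^\ast)_\omega\rangle=\lim_{j,\omega}\varphi_j(R^{\psi_j}(\xi_j)R^{\psi_j}(\xi_j)^\ast)$, and you correctly flag that only the inequality \eqref{Eqn=SubtleChange} is available. But you offer no mechanism to upgrade it; ``check through Lemma \ref{Lem=TakingTheSup} that the limits coincide'' is precisely the missing content, and the naive equality is exactly the kind of identity the paper warns fails for ultraproduct weights. The paper's proof circumvents the issue entirely: it never compares $\langle\varphi,\theta^\psi(\xi,\xi)\rangle$ with a $\lim_{j,\omega}$ of values of $\varphi_j$. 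Instead it uses the Appendix description $\bigl(\tfrac{d\varphi}{d\psi}\bigr)^{1/2}(xD_\psi^{1/2})=D_\varphi^{1/2}x$ together with Lemma \ref{Lem=OpenLemma} to obtain the \emph{exact} vector identity \eqref{Eqn=House} on the core $p_\psi\mathcal{A}^\ast p_\psi\mathcal{B}p_\psi D_\psi^{1/2}$; the one-sided estimate of Lemma \ref{Lem=UltraOfElements} is used only to conclude $q_\psi\le P_{<\infty}(A)$, after which the equality case \eqref{Eqn=PIsNeeded} applies and the two square roots agree on a common core, whence equality by self-adjointness. If you want to salvage your outline you would need to replace the form comparison by this operator-level identity, or supply an independent proof that no mass escapes in the limit.
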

\begin{proof}
Let $\mathcal{A}$ be the set of all vectors $a = (a_j)_\omega$   such that $\Vert a_j \Vert_{2, \varphi_j}$ is bounded. Let $\mathcal{B}$ be the set of all vectors $b = (b_j)_\omega$  such that $\Vert b_j  \Vert_{2, \psi_j}$ is bounded.   Firstly we claim that the linear span of the vectors $p_\psi \mathcal{A}^\ast p_\psi \mathcal{B} p_\psi D_{\psi}^{\frac{1}{2}}$   forms a core for $\left( \frac{d \varphi}{d\psi} \right)^{\frac{1}{2}}$. Indeed by Lemma \ref{Lem=CoreOfSpatialDerivative} it suffices to show that every element   $y \in   p_\psi \nphi^\ast  p_\psi \npsi p_\psi$, say $y = p_\psi y_1^\ast p_\psi y_2 p_\psi$ with $y_1 \in \nphi, y_2 \in \npsi$, may be approximated by elements in ${\rm span } \: p_\psi \mathcal{A}^\ast p_\psi \mathcal{B}  p_\psi D_{\psi}^{\frac{1}{2}}$  in the graph norm of $\left( \frac{d \varphi}{d\psi} \right)^{\frac{1}{2}}$. Now it follows from Lemma \ref{Lem=AnotherApproximation} that we may find   nets $\{a_k\}_k$ in $\mathcal{A}$ and $\{b_l\}_l$ in  $\mathcal{B}$ such that $p_\psi a_k p_\psi$ and $p_\psi b_l p_\psi$ are bounded, such that $a_k^\ast \rightarrow y_1^\ast$ and $b_k \rightarrow y_2$  strongly and such that $D_{\varphi}^{\frac{1}{2}} p_\psi a_k^\ast p_\psi \rightarrow D_{
\varphi}^{\frac{1}{2}} p_\psi y_1^\ast p_\psi$ and $p_\psi b_l p_\psi D_{\psi}^{\frac{1}{2}}  \rightarrow p_\psi y_2 p_\psi D_{\psi}^{\frac{1}{2}}$ in norm.
Then,
\[
\begin{split}
& \Vert p_\psi a_k^\ast p_\psi b_l p_\psi D_\psi^{\frac{1}{2}} - p_\psi y_1^\ast p_\psi y_2 p_\psi D_\psi^{\frac{1}{2}} \Vert_2\\
  \leq &
  \Vert p_\psi a_k^\ast p_\psi b_l  p_\psi D_\psi^{\frac{1}{2}} - p_\psi a_k^\ast p_\psi y_2 p_\psi D_\psi^{\frac{1}{2}} \Vert_2 +
\Vert p_\psi a_k^\ast  p_\psi y_2  p_\psi D_\psi^{\frac{1}{2}} - p_\psi y_1^\ast  p_\psi y_2 p_\psi D_\psi^{\frac{1}{2}}  \Vert_2 \\
 \leq  &   \Vert p_\psi a_k^\ast p_\psi \Vert \Vert p_\psi b_l p_\psi D_\psi^{\frac{1}{2}} - p_\psi y_2 p_\psi D_\psi^{\frac{1}{2}} \Vert_2 +
\Vert p_\psi a_k^\ast p_\psi y_2 p_\psi D_\psi^{\frac{1}{2}} - p_\psi y_1^\ast p_\psi y_2 p_\psi D_\psi^{\frac{1}{2}}  \Vert_2,
\end{split}
\]
which goes to 0. Similarly $\Vert D_{\varphi}^{\frac{1}{2}} p_\psi a_k^\ast p_\psi b_l  p_\psi  - D_\varphi^{\frac{1}{2}}p_\psi y_1^\ast p_\psi y_2 p_\psi \Vert_2 \rightarrow 0$. This concludes the claim, see Theorem \ref{Thm=Spacial}.

Now fix $a = (a_j)_\omega \in \mathcal{A}$ and $b = (b_j)_\omega \in \mathcal{B}$. Let $z_k = (z_{k,j})_\omega$ be a bounded net in $\cM$ converging to $p_\psi$ strongly.  Assume moreover that each $z_{k,j}$ is analytic for $\sigma^{\varphi_j}$ and that $\sigma^{\varphi_j}_{i/2}(z_{k,j}^\ast)$ is bounded uniformly in $j$ and that $(\sigma^{\varphi_j}_{i/2}(z_{k,j}^\ast))_\omega \rightarrow p_\psi$ strongly, see Lemma \ref{Lem=SubtleLemma}. This ensures that $(a_j z_{k,j}^\ast)_\omega \in \mathcal{A}$ by Lemma \ref{Lem=StandardTomTak}.
Set $x_{k,l} = z_k a^\ast z_l b$ so that $x_{k,l} = (x_{k,l,j})_\omega = (z_{k,j} a_j^\ast z_{l,j} b_j)_\omega$.  We get by Theorem \ref{Thm=Spacial}, Lemma \ref{Lem=OpenLemma} and again Theorem \ref{Thm=Spacial},
\begin{equation}\label{Eqn=House}
\begin{split}
& \left( \frac{d \varphi}{d\psi} \right)^{\frac{1}{2}} p_\psi x_{k,l} p_\psi D_\psi^{\frac{1}{2}} = D_\varphi^{\frac{1}{2}} p_\psi x_{k,l} p_\psi = D_\varphi^{\frac{1}{2}} x_{k,l} p_\psi \\
= & (D_{\varphi_j}^{\frac{1}{2}} x_{k,l,j})_\omega p_\psi = \left(  \left( \frac{d \varphi_j}{d \psi_j} \right)^{\frac{1}{2}} x_{k,l,j} D_{\psi_j}^{\frac{1}{2}} \right)_\omega p_\psi.
\end{split}
\end{equation}
So that, by Lemma \ref{Lem=UltraOfElements} and Lemma \ref{Lem=OpenLemma},
\[
\begin{split}
&  \Vert \left( \frac{d \varphi}{d \psi} \right)^{\frac{1}{2}} p_\psi x_{k,l} p_\psi D_\psi^{\frac{1}{2}} \Vert_2^2  \geq
\langle  \left( \frac{d \varphi_j}{d \psi_j} \right)_\omega, \rho_{  (x_{k,l,j} D_{\psi_j}^{\frac{1}{2}})_\omega,  (x_{k,l,j} D_{\psi_j}^{\frac{1}{2}})_\omega } \rangle  =
 \langle  \left( \frac{d \varphi_j}{d \psi_j} \right)_\omega, \rho_{  x_{k,l} D_{\psi}^{\frac{1}{2}},  x_{k,l} D_{\psi}^{\frac{1}{2}}} \rangle.
\end{split}
\]
Taking the limit in $k,l$ gives $x_{k,l} p_\psi D_{\psi}^{\frac{1}{2}} \rightarrow p_\psi a^\ast p_\psi b p_\psi D_\psi^{\frac{1}{2}}$ and $D_{\varphi}^{\frac{1}{2}} p_\psi x_{k,l} p_\psi  \rightarrow D_{\varphi}^{\frac{1}{2}} p_\psi a^\ast p_\psi b p_\psi$ in norm. This shows that (using closedness of the quadratic form associated with $\left( \frac{d \varphi_j}{d \psi_j} \right)_\omega$),
\[
\begin{split}
& \Vert \left( \frac{d \varphi}{d \psi} \right)^{\frac{1}{2}} p_\psi a^\ast p_\psi b p_\psi D_\psi^{\frac{1}{2}} \Vert_2^2
= \lim_{k,l} \Vert \left( \frac{d\varphi}{d \psi} \right)^{\frac{1}{2}} p_\psi x_{k,l} p_\psi D_{\psi}^{\frac{1}{2}} \Vert_2^2 \\
\geq & \lim_{k,l} \langle \left( \frac{d \varphi_j}{d \psi_j} \right)_\omega, \rho_{x_{k,l} D_{\psi}^{\frac{1}{2}}, x_{k,l} D_{\psi}^{\frac{1}{2}} } \rangle
= \langle  \left( \frac{d \varphi_j}{d \psi_j} \right)_\omega,  \rho_{p_\psi a^\ast p_\psi b p_\psi D_\psi^{\frac{1}{2}}, p_\psi a^\ast p_\psi b p_\psi D_\psi^{\frac{1}{2}}} \rangle.
\end{split}
\]
By Lemma \ref{Lem=AnotherApproximation} the span of the set of vectors $p_\psi a^\ast p_\psi b p_\psi D_\psi^{\frac{1}{2}}$ with $a \in \mathcal{A}$ and $b \in \mathcal{B}$ is dense in $p_\psi \cH p_\psi = q_\psi(\cH)$. We see that $q_\psi \leq P_{< \infty}\left(  \left( \frac{d \varphi_j}{d \psi_j} \right)_\omega \right)$. Therefore, from \eqref{Eqn=House} and Lemma \ref{Lem=UltraOfElements},
\[
\begin{split}
 & \left( \frac{d \varphi}{d \psi} \right)^{\frac{1}{2}} p_\psi x_{k,l} p_\psi D_\psi^{\frac{1}{2}}
 =  p_\psi \left( \frac{d \varphi}{d \psi} \right)^{\frac{1}{2}} p_\psi x_{k,l} p_\psi D_\psi^{\frac{1}{2}}   p_\psi \\
= & p_\psi \left( \left( \frac{d \varphi_j}{ d \psi_j} \right)^{\frac{1}{2}} x_{k,l,j} D_{\psi_j}^{\frac{1}{2}}  \right)_\omega p_\psi
= q_\psi \left( \frac{d \varphi_j}{d \psi_j} \right)_\omega^{\frac{1}{2}} (x_{k,l,j} D_{\psi_j}^{\frac{1}{2}})_\omega.
\end{split}
\]
and taking limits in $k,l$ gives (using closedness of the operators),
\begin{equation}\label{Eqn=Coreish}
\left( \frac{d \varphi}{d \psi} \right)^{\frac{1}{2}} p_\psi a^\ast p_\psi b p_\psi D_{\psi}^{\frac{1}{2}} =
= q_\psi \left( \frac{d \varphi_j}{d \psi_j} \right)_\omega^{\frac{1}{2}}   ( p_\psi a^\ast p_\psi b p_\psi D_{\psi}^{\frac{1}{2}})
= q_\psi \left( \frac{d \varphi_j}{d \psi_j} \right)_\omega^{\frac{1}{2}} q_\psi ( p_\psi a^\ast p_\psi b p_\psi D_{\psi}^{\frac{1}{2}}).
\end{equation}
We proved in the first paragraph  that the span of the vectors $p_\psi a^\ast p_\psi b p_\psi D_{\psi}^{\frac{1}{2}}$ with $a \in \mathcal{A}$ and $b \in \mathcal{B}$ forms a core for  $\left( \frac{d \varphi}{d \psi} \right)^{\frac{1}{2}}$ so that we get from   \eqref{Eqn=Coreish} that $\left( \frac{d \varphi}{d \psi} \right)^{\frac{1}{2}} \subseteq q_\psi \left(  \frac{d \varphi_j}{d \psi_j} \right)_\omega^{\frac{1}{2}} q_\psi$ and as both sides are self-adjoint we actually have
\[
\left( \frac{d \varphi}{d \psi} \right)^{\frac{1}{2}} = q_\psi \left(  \frac{d \varphi_j}{d \psi_j} \right)_\omega^{\frac{1}{2}} q_\psi.
\]
It remains to prove that $q_\psi(\cH)$ is an invariant subspace of $\left( \frac{d \varphi_j}{d \psi_j} \right)_\omega$. In order to do so take a bounded net $z_k = (z_{k,j})$ such that $z_k \rightarrow p_\psi$ strongly and such that each $z_{k,j}$ is analytic for $\sigma^{\varphi_j}$ with $(\sigma^{\varphi_j}_{-i/2}(z_{k,j}))_\omega$ bounded and strongly convergent to $p_\psi$, c.f. Lemma \ref{Lem=SubtleLemma}. Set $P_{< \infty} = P_{< \infty}\left(  \left( \frac{d \varphi_j}{d \psi_j} \right)_\omega \right)$. Then, using closedness of the operators involved, Lemma \ref{Lem=UltraOfElements} for the third equality, and using that we saw that $q_\psi \leq P_{<\infty}$,
\[
\begin{split}
& \left( \frac{d \varphi_j}{d \psi_j} \right)_\omega^{\frac{1}{2}}  p_\psi a^\ast p_\psi b p_\psi D_{\psi}^{\frac{1}{2}}
= \lim_{k,l,m} \left( \frac{d \varphi_j}{d \psi_j} \right)_\omega^{\frac{1}{2}}  z_k a^\ast z_l b z_m D_{\psi}^{\frac{1}{2}} \\
= & \lim_{k,l,m} \left( \frac{d \varphi_j}{d \psi_j} \right)_\omega^{\frac{1}{2}}  (z_{k,j} a_j^\ast z_{l,j} b_j z_{m,j} D_{\psi_j}^{\frac{1}{2}})_\omega
= \lim_{k,l,m} P_{< \infty} \left(   ( D_{\varphi_j}^{\frac{1}{2}} z_{k,j} a_j^\ast z_{l,j} b_j z_{m,j} \right)_\omega \\
= & P_{< \infty}  ( \sigma_{-i/2}^{\varphi_j}(z_{k,j}) )_\omega ( D_{\varphi_j}^{\frac{1}{2}} a_j^\ast )_\omega z_l (b_j)_\omega z_m
 =  P_{< \infty}  p_\psi (D_{\varphi_j}^{\frac{1}{2}} a_j^\ast)_\omega p_\psi (b_j)_\omega p_\psi \\
= &   p_\psi (D_{\varphi_j}^{\frac{1}{2}} a_j^\ast) p_\psi (b_j)_\omega p_\psi  \in q_\psi(\cH).
\end{split}
\]
So we conclude that $\left( \frac{d \varphi_j}{d \psi_j} \right)_\omega^{\frac{1}{2}}$ preserves $q_\psi(\cH)$.

\end{proof}

\subsection{Modular theory} In this section we show that Theorem \ref{Thm=OneMoreSpatial} implies  results on modular theory and Radon-Nikodym derivatives. We first prove Theorem A of the introduction.

\begin{thm}\label{Thm=Modular}
Let $\varphi_j$ be  normal, semi-finite, faithful weights on $\cM_j$. Let $\varphi = (\varphi_j)_\omega$ be the ultraproduct weight on $\cM$ with support $p_\varphi$. Assume that $\varphi$ is semi-finite. 
Then,
\[
\sigma^{\varphi}_t( p_\varphi (x_j)_\omega p_\varphi ) =  p_\varphi (\sigma_t^{\varphi_j}(x_j))_\omega p_\varphi.
\]
\end{thm}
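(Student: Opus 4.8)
The plan is to derive Theorem~\ref{Thm=Modular} from the spatial result Theorem~\ref{Thm=OneMoreSpatial} by specializing the commutant weights. The key observation is that the modular automorphism group of $\varphi$ is implemented by the spatial derivative via \eqref{Eqn=SpacialModular}, so if I can identify $\frac{d\varphi}{d\psi}$ with a localized ultraproduct of the individual spatial derivatives, then the modular flow of $\varphi$ will localize accordingly. Concretely, I would fix normal, semi-finite, faithful weights $\psi_j$ on the commutants $\cM_j'$ with semi-finite ultraproduct $\psi=(\psi_j)_\omega$, arranged so that the opposite support $p_\psi=J_\omega r_\psi J_\omega$ satisfies $p_\psi \geq p_\varphi$ (for instance by taking $\psi_j$ faithful so that $r_\psi=1$ and hence $p_\psi=1\geq p_\varphi$, though one must check $\psi$ remains semi-finite; a cleaner choice may be needed but $p_\psi=1$ works). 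Then Theorem~\ref{Thm=OneMoreSpatial} gives
\[
\frac{d\varphi}{d\psi} = q_\psi \left( \frac{d\varphi_j}{d\psi_j}\right)_\omega q_\psi,
\]
with $q_\psi(\cH)$ invariant under $\left(\frac{d\varphi_j}{d\psi_j}\right)_\omega$.

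Next I would pass to the unitary groups. Writing $d_j=\frac{d\varphi_j}{d\psi_j}$, the relation \eqref{Eqn=SpacialModular} gives $d_j^{it}\, x_j\, d_j^{-it}=\sigma^{\varphi_j}_t(x_j)$ for $x_j\in\cM_j$. I want to take the ultraproduct of this identity and restrict to the relevant spectral subspace. The delicate point is that $\left(\frac{d\varphi_j}{d\psi_j}\right)_\omega$ lives in the extended positive cone and, by Lemma~\ref{Lem=FunctionaCalculus} applied to $e^{it\,(\cdot)}$, the naive relation $\left(\left(\frac{d\varphi_j}{d\psi_j}\right)_\omega\right)^{it}=(d_j^{it})_\omega$ fails in general. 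This is exactly where Lemma~\ref{Thm=SpectralStuff} enters: on the spectral subspace $\cK=\cup_{\lambda>0}\chi_{(1/\lambda,\lambda)}\!\left(\frac{d\varphi}{d\psi}\right)\cH$ one has $\left(\frac{d\varphi}{d\psi}\right)^{it}\big|_{\cK}=(d_j^{it})_\omega\big|_{\cK}$ after restriction. Since $\frac{d\varphi}{d\psi}=q_\psi(d_j)_\omega q_\psi$ is invertible as a positive self-adjoint operator on $q_\psi(\cH)=p_\varphi(\cH)$ (its support equals $p_\varphi$ because $\varphi$ is semi-finite with support $p_\varphi$), the subspace $\cK$ is dense in $p_\varphi(\cH)$ and I can use Lemma~\ref{Thm=SpectralStuff} to compare the two unitary groups on this core.

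I would then compute, for $x=(x_j)_\omega\in\cM$,
\[
\sigma^{\varphi}_t(p_\varphi x p_\varphi) = \left(\frac{d\varphi}{d\psi}\right)^{it} p_\varphi x p_\varphi \left(\frac{d\varphi}{d\psi}\right)^{-it},
\]
using \eqref{Eqn=SpacialModular} in the corner $p_\varphi\cM p_\varphi$. Replacing $\left(\frac{d\varphi}{d\psi}\right)^{it}$ by $(d_j^{it})_\omega$ on $\cK$ (via Lemma~\ref{Thm=SpectralStuff}), applying $(d_j^{it} x_j d_j^{-it})_\omega=(\sigma^{\varphi_j}_t(x_j))_\omega$ (the individual modular relation, valid since the maps $d_j^{it}(\cdot)d_j^{-it}$ are uniformly bounded $\ast$-automorphisms so their ultraproduct is well defined and multiplicative), and conjugating by $p_\varphi$, I obtain
\[
\sigma^{\varphi}_t(p_\varphi x p_\varphi) = p_\varphi (\sigma^{\varphi_j}_t(x_j))_\omega p_\varphi.
\]
The main obstacle I anticipate is the rigorous handling of the unbounded/extended-cone issues: justifying the substitution of $(d_j^{it})_\omega$ for the genuine imaginary power of the localized derivative, controlling where the infinite part $P_\infty\left((d_j)_\omega\right)$ sits relative to $q_\psi$ (already shown disjoint in Theorem~\ref{Thm=OneMoreSpatial} since $q_\psi\leq P_{<\infty}$), and verifying that the spectral subspace $\cK$ is a core on which all the algebraic manipulations are legitimate. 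I expect the bulk of the work to be these domain/core verifications rather than any new conceptual input, with Lemma~\ref{Thm=SpectralStuff} doing the essential heavy lifting to bypass the failure of $f((x_j)_\omega)=(f(x_j))_\omega$ for $f(t)=e^{it}$.
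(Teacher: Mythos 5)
Your overall strategy is the paper's: reduce to Theorem \ref{Thm=OneMoreSpatial}, use Lemma \ref{Thm=SpectralStuff} to pass to imaginary powers on a spectral subspace, and conclude via \eqref{Eqn=SpacialModular}. But there is a genuine gap in your setup, namely the choice of the auxiliary weights $\psi_j$, and it is not a cosmetic issue. First, your proposed concrete choice is incorrect: taking each $\psi_j$ faithful does \emph{not} give $r_\psi=1$. The support of an ultraproduct weight can be strictly smaller than $1$ even when all $\psi_j$ are faithful (e.g.\ $\cM_j'=\mathbb{C}$ with $\psi_j(1)=1/j$ gives $(\psi_j)_\omega=0$), and semi-finiteness of $(\psi_j)_\omega$ is likewise not automatic. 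Second, and more seriously, even if you could arrange $p_\psi=1$, the argument would break: you only have $p_\varphi\leq p_\psi$, so the spatial derivative $\frac{d\varphi}{d\psi}$ restricted to $q_\psi(\cH)$ has kernel $(q_\psi-p_\varphi q_\psi)\cH\neq 0$ whenever $q_\psi\not\leq p_\varphi$. Consequently the spectral subspace $\cK=\cup_{\lambda>0}\chi_{(1/\lambda,\lambda)}\left(\left(\frac{d\varphi_j}{d\psi_j}\right)_\omega\right)\cH$ does \emph{not} contain $q_\psi(\cH)$, your identification $q_\psi(\cH)=p_\varphi(\cH)$ fails, and Lemma \ref{Thm=SpectralStuff} cannot be invoked to replace $\left(\frac{d\varphi}{d\psi}\right)^{it}$ by $\left(\left(\frac{d\varphi_j}{d\psi_j}\right)^{it}\right)_\omega$ on the vectors you need.

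The fix, which is what the paper does, is to take $\psi_j$ to be the \emph{opposite weight} of $\varphi_j$ on $\cM_j'$. Then $p_{\varphi_j}=p_{\psi_j}$, the support of $\psi=(\psi_j)_\omega$ is $J_\omega p_\varphi J_\omega$ so that $p_\varphi=p_\psi$ exactly, and semi-finiteness of $\psi$ comes for free from that of $\varphi$. With $p_\varphi=p_\psi$ one gets $q_\psi\leq p_\varphi$, hence the kernel of $\frac{d\varphi}{d\psi}$ on $q_\psi(\cH)$ is zero; combined with $q_\psi\leq P_{<\infty}$ (from Theorem \ref{Thm=OneMoreSpatial}) this places $q_\psi(\cH)$ inside $\cK$, and only then does Lemma \ref{Thm=SpectralStuff} yield $\left(\frac{d\varphi}{d\psi}\right)^{it}=q_\psi\left(\left(\frac{d\varphi_j}{d\psi_j}\right)^{it}\right)_\omega q_\psi$. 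The remainder of your computation (two applications of \eqref{Eqn=SpacialModular} and transporting the result from the corner $q_\psi\cM q_\psi$ to $p_\varphi\cM p_\varphi$ via the isomorphism $x\mapsto q_\psi x q_\psi$) then goes through as you describe. So the missing idea is precisely the opposite-weight choice; without it the core step of your argument is not available.
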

\begin{proof}
Let $\psi_j$ be the opposite weight of $\varphi_j$ (see \cite{TakII}) and let $\psi = (\psi_j)_\omega$ be its ultraproduct.
Letting again $r_{\psi_j}$ be the support of $\psi_j$ and $p_{\psi_j} = J_j r_{\psi_j} J_j$. We have $p_{\varphi_j} = p_{\psi_j}$ and as the support of $\psi$ equals $J_\omega p_\varphi J_\omega$ we also have $p_\varphi = p_\psi$. Hence we may apply Theorem \ref{Thm=OneMoreSpatial} to find that,
\begin{equation}\label{Eqn=SpatialEquality}
\frac{d \varphi}{ d\psi} = q_\psi \left(  \frac{d \varphi_j}{ d \psi_j} \right)_\omega q_\psi.
\end{equation}
Now since $p_\varphi = p_\psi$ the kernel of $\frac{d \varphi}{ d\psi}$ acting on $q_\psi(\cH)$ is 0. Therefore \eqref{Eqn=SpatialEquality} shows that the restriction of  $\left( \frac{d \varphi_j}{ d \psi_j} \right)_\omega$ to the invariant subspace $ q_\psi(\cH)$ has kernel equal to 0 and $q_\psi$ is disjoint from the infinite projection in the spectral resolution of $\left( \frac{d \varphi_j}{ d \psi_j} \right)_\omega$ (see \eqref{Eqn=Integral}). This shows that $q_\psi(\cH)$ is contained in $\cup_{\lambda >0} \chi_{[\frac{1}{\lambda}, \lambda]}(  \left( \frac{d \varphi_j}{d \psi_j} \right)_\omega ) \cH$. Applying Theorem \ref{Thm=OneMoreSpatial} and Theorem \ref{Thm=SpectralStuff} and noting in addition that $ q_\psi$ and $\left( \left(  \frac{d \varphi_j}{ d \psi_j} \right)^{it}\right)_\omega$ commute then shows,
\[
\left( \frac{d \varphi}{ d\psi} \right)^{it} =
\left(  q_\psi \left(  \frac{d \varphi_j}{ d \psi_j} \right)_\omega  q_\psi \right)^{it}
=  q_\psi \left( \left(  \frac{d \varphi_j}{ d \psi_j} \right)^{it}\right)_\omega  q_\psi.
\]
Then we apply \eqref{Eqn=SpacialModular} twice to get,
\[
\begin{split}
  &\sigma^{\varphi}_t(  q_\psi (x_j)_\omega  q_\psi)
=  \left( \frac{d \varphi}{ d\psi} \right)^{it}  q_\psi (x_j)_\omega  q_\psi \left( \frac{d \varphi}{ d\psi} \right)^{-it}\\
= &   q_\psi \left(  \left(\frac{d \varphi_j}{ d\psi_j}\right)^{it}\right)_\omega    q_\psi (x_j)_\omega q_\varphi \left(  \left(\frac{d \varphi_j}{ d\psi_j}  \right)^{-it}\right)_\omega  q_\psi
=     q_\psi \left(  \left(\frac{d \varphi_j}{ d\psi_j}\right)^{it}\right)_\omega     (x_j)_\omega  \left(  \left(\frac{d \varphi_j}{ d\psi_j}  \right)^{-it}\right)_\omega  q_\psi\\
= &   q_\psi \left(  \left(\frac{d \varphi_j}{ d\psi_j}\right)^{it} x_j   \left(\frac{d \varphi_j}{ d\psi_j}  \right)^{-it}\right)_\omega  q_\psi
=  q_\psi (\sigma_t^{\varphi_j}( x_j )  )_\omega  q_\psi.
\end{split}
\]
This also yields that $\sigma^{\varphi}_t( p_\varphi (x_j)_\omega p_\varphi ) =  p_\varphi (\sigma_t^{\varphi_j}(x_j))_\omega p_\varphi$ as $p_\varphi \cM p_\varphi \rightarrow q_\varphi \cM q_\varphi: x \mapsto q_\varphi x q_\varphi$ is a (non-spatial) isomorphism (see \cite[Lemma 3.26]{AndoHaagerup} or \cite[Corollary 2.5, Lemma 2.6]{HaagerupScan}).
\end{proof}

We refer to \cite[Section VIII.3]{TakII} for the definition of the cocycle derivative.

\begin{thm}\label{Thm=CocycleDerivative}
Let $\varphi_j$ and $\rho_j$ be  normal, semi-finite, faithful weights on $\cM_j$. Let $\varphi = (\varphi_j)_\omega$ and $\rho = (\rho_j)_\omega$ be their ultraproduct weights on $\cM$. Assume that $\rho$ and $\varphi$ are semi-finite with equal support $p$.
Then,
\begin{equation}\label{Eqn=RadonNikodym}
p  \left( \left( \frac{D \varphi_j}{D \psi_j} \right)_t \right)_\omega =
 \left( \left( \frac{D \varphi_j}{D \psi_j} \right)_t \right)_\omega p  =
  \left( \frac{D \varphi}{D \psi} \right)_t.
\end{equation}
\end{thm}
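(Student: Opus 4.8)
The plan is to reduce \eqref{Eqn=RadonNikodym} to the localization of spatial derivatives from Theorem \ref{Thm=OneMoreSpatial}, running the argument of Theorem \ref{Thm=Modular} but carrying the two weights simultaneously. (Note that the $\psi_j,\psi$ in \eqref{Eqn=RadonNikodym} should read $\rho_j,\rho$; below $\psi$ denotes an auxiliary commutant weight.) Following the proof of Theorem \ref{Thm=Modular}, I would let $\psi_j$ be the opposite weight of $\varphi_j$ on $\cM_j'$ and set $\psi=(\psi_j)_\omega$, so that $\psi$ is semi-finite with $r_\psi=J_\omega p J_\omega$, $p_\psi=p_\varphi=p$, and hence also $p_\rho=p=p_\psi$; put $q_\psi=p_\psi r_\psi$. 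The last ingredient is the spatial description of the Connes cocycle for two n.s.f.\ weights relative to a \emph{common} commutant weight $\psi$, namely $\left(\frac{D\varphi}{D\rho}\right)_t=\left(\frac{d\varphi}{d\psi}\right)^{it}\left(\frac{d\rho}{d\psi}\right)^{-it}$ (see \cite{Connes,TerpII}), with the same formula valid for each index $j$.

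First I would localize the imaginary powers of both spatial derivatives. Applying Theorem \ref{Thm=OneMoreSpatial} to $(\varphi,\psi)$ and to $(\rho,\psi)$ (the hypotheses $p_\varphi\le p_\psi$ and $p_\rho\le p_\psi$ hold with equality), exactly the argument of Theorem \ref{Thm=Modular} --- using that $p_\varphi=p_\psi$, resp.\ $p_\rho=p_\psi$, forces $q_\psi(\cH)$ into $\cup_{\lambda>0}\chi_{[\frac1\lambda,\lambda]}\big(\big(\frac{d\varphi_j}{d\psi_j}\big)_\omega\big)\cH$, resp.\ the analogous set for $\rho$, and then invoking Lemma \ref{Thm=SpectralStuff} --- would give
\[
\left(\frac{d\varphi}{d\psi}\right)^{it}=q_\psi\left(\left(\frac{d\varphi_j}{d\psi_j}\right)^{it}\right)_\omega q_\psi,\qquad \left(\frac{d\rho}{d\psi}\right)^{-it}=q_\psi\left(\left(\frac{d\rho_j}{d\psi_j}\right)^{-it}\right)_\omega q_\psi,
\]
together with the fact that $q_\psi$ commutes with both $\big(\big(\frac{d\varphi_j}{d\psi_j}\big)^{it}\big)_\omega$ and $\big(\big(\frac{d\rho_j}{d\psi_j}\big)^{-it}\big)_\omega$.

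Next I would multiply these identities. Since $\big(\frac{d\varphi_j}{d\psi_j}\big)^{it}$ and $\big(\frac{d\rho_j}{d\psi_j}\big)^{-it}$ are unitaries, hence uniformly bounded sequences on $\cH_j$, the ultraproduct is multiplicative on them, so $\big(\big(\frac{d\varphi_j}{d\psi_j}\big)^{it}\big)_\omega\big(\big(\frac{d\rho_j}{d\psi_j}\big)^{-it}\big)_\omega=\big((D\varphi_j/D\rho_j)_t\big)_\omega=:w_t\in\cM$. Combined with the commutation of $q_\psi$ and $q_\psi^2=q_\psi$, the spatial cocycle formula yields $q_\psi\left(\frac{D\varphi}{D\rho}\right)_t q_\psi=q_\psi w_t q_\psi$, read off on the corner $p\cM p$ represented on $q_\psi(\cH)$. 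As $\left(\frac{D\varphi}{D\rho}\right)_t\in p\cM p$ and $q_\psi w_t q_\psi=q_\psi(pw_tp)q_\psi$ (using $pq_\psi=q_\psi p=q_\psi$), injectivity of the isomorphism $p\cM p\cong q_\psi\cM q_\psi$, $x\mapsto q_\psi x q_\psi$, of \cite[Lemma 3.26]{AndoHaagerup} gives $\left(\frac{D\varphi}{D\rho}\right)_t=pw_tp$. To finish, observe that each $u_{j,t}:=(D\varphi_j/D\rho_j)_t$ implements $\sigma^{\varphi_j}_t=\Ad(u_{j,t})\circ\sigma^{\rho_j}_t$, so multiplicativity on bounded sequences upgrades this to $(\sigma^{\varphi_j}_t)_\omega=\Ad(w_t)\circ(\sigma^{\rho_j}_t)_\omega$ on $\cM$; evaluating at $p$ and using $(\sigma^{\varphi_j}_t)_\omega(p)=(\sigma^{\rho_j}_t)_\omega(p)=p$ (the invariance established, for each weight separately, in the proof of Lemma \ref{Lem=SubtleLemma}) gives $p=w_t p w_t^\ast$, whence $pw_t=w_tp$. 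Together with $\left(\frac{D\varphi}{D\rho}\right)_t=pw_tp$ this produces $pw_t=w_tp=pw_tp=\left(\frac{D\varphi}{D\rho}\right)_t$, which is \eqref{Eqn=RadonNikodym}.

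The main obstacle is the support bookkeeping behind the third paragraph: one must verify that $q_\psi(\cH)$ is precisely the standard GNS space carrying the reduced weights $\varphi,\rho$ on $p\cM p$ and the reduced commutant weight $\psi$, so that the spatial cocycle formula is legitimately applied there, and that the $\rho$-analogue of the spectral-localization step of Theorem \ref{Thm=Modular} is valid. This is exactly where $p_\rho=p_\psi$ (not merely $p_\rho\le p_\psi$) is needed: it guarantees that $\big(\frac{d\rho_j}{d\psi_j}\big)_\omega$ has trivial kernel and no infinite spectral part (in the sense of \eqref{Eqn=Integral}) on $q_\psi(\cH)$, which is what allows Lemma \ref{Thm=SpectralStuff} to be invoked for $\rho$ as well as for $\varphi$. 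Once both localizations are secured, the remaining manipulations are the routine algebra indicated above.
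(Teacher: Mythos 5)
Your proof is correct and follows essentially the same route as the paper's: choose an auxiliary commutant weight $\psi$ with $p_\psi = p$, write the cocycle via the chain rule for spatial derivatives, localize the imaginary powers using Theorem \ref{Thm=OneMoreSpatial} together with Lemma \ref{Thm=SpectralStuff} exactly as in Theorem \ref{Thm=Modular}, multiply the resulting bounded ultraproducts, and transfer through the isomorphism $p\cM p \simeq q_\psi \cM q_\psi$. The only (immaterial) deviations are that the paper takes an arbitrary $\psi$ with support $J_\omega p J_\omega$ rather than the concrete opposite weight, and obtains the two-sided identity by symmetry of the computation rather than by your argument that the cocycle intertwining relation forces $p$ to commute with $\bigl(\bigl(\frac{D\varphi_j}{D\rho_j}\bigr)_t\bigr)_\omega$; your reading of $\psi_j,\psi$ as $\rho_j,\rho$ in the statement is also the intended one.
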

\begin{proof}
Let $\psi = (\psi_j)_\omega$ be an ultraproduct of normal, semi-finite, faithful weights $\psi_j$ on $\cM_j$ such that $\psi$ is semi-finite with support $r = J_\omega p J_\omega$. Set $q = pr$. Then by \cite[Theorem IX.3.8]{TakII}, Theorem \ref{Thm=OneMoreSpatial}, Theorem \ref{Thm=SpectralStuff} (which is applicable, c.f. the proof of Theorem \ref{Thm=Modular}) and once more  \cite[Theorem IX.3.8]{TakII},  we see that,
\[
\begin{split}
& \left(  \frac{D \varphi}{ D \rho}  \right)_t = \left( \frac{d \varphi}{ d \psi} \right)^{it} \left( \frac{d \psi}{ d \rho}  \right)^{it}
= q \left( \frac{d \varphi_j}{d \psi_j} \right)^{it}_\omega q \left( \frac{d \psi_j}{d \rho_j} \right)^{it}_\omega q \\
= & q \left( \frac{d \varphi_j}{d \psi_j} \right)^{it}_\omega   \left( \frac{d \psi_j}{d \rho_j} \right)^{it}_\omega
= q \left( \left( \frac{d \varphi_j}{d \psi_j} \right)^{it}  \left( \frac{d \psi_j}{d \rho_j} \right)^{it}\right)_\omega
=   q \left(  \left( \frac{D \varphi_j}{D \rho_j} \right)_t  \right)_\omega.
\end{split}
\]
Adapting the second line of this equation  in the obvious way also gives
\[
\left(  \frac{D \varphi}{ D \rho}  \right)_t =    \left(  \left( \frac{D \varphi_j}{D \rho_j} \right)_t  \right)_\omega q.
 \]
 Under the isomorphism $p \cM p \simeq q \cM q: x \mapsto q x q$ (c.f. \cite[Lemma 3.26]{AndoHaagerup} or \cite[Corollary 2.5, Lemma 2.6]{HaagerupScan}),  this yields \eqref{Eqn=RadonNikodym}.
\end{proof}

\subsection{Related results}
We end this section with a discussion about the following question. If $\rho \in \cM_\ast$ then it can always be written in the form $\rho = (\rho_j)_\omega$ as $\cM_\ast \simeq \prod_{j, \omega} (\cM_j)_\ast$. But can a weight on $\cM$ always be decomposed in this way? We answer this question in the negative.

\begin{prop}\label{Prop=Decomposition}
There exists an ultraproduct $\cM = \prod_{j, \omega} \cM_j$ with normal, semi-finite, faithful weight $\varphi$ on $\cM$ such that $\varphi$ cannot be written as the ultraproduct of normal, semi-finite, faithful weights on each of the $\cM_j$'s.
\end{prop}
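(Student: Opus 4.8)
The plan is to use Theorem \ref{Thm=Modular} as a rigidity constraint: an ultraproduct of nsf weights has a modular flow that is forced to respect the sequential structure of $\cM$, and I will exhibit an nsf weight violating this. Throughout write $\cM_0\subseteq\cM$ for the norm (Banach space) ultraproduct $\prod_{j,\omega}\cM_j$, i.e. the $*$-subalgebra of those elements of $\cM$ genuinely represented by a bounded sequence $(x_j)_\omega$; recall from \cite{AndoHaagerup} that when the fibers are infinite dimensional $\cM_0$ is \emph{strictly} smaller than its strong closure $\cM$.

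First I would record the necessary condition. Suppose, for contradiction, that $\varphi$ is nsf on $\cM$ and can be written as $\varphi=(\varphi_j)_\omega$ with each $\varphi_j$ nsf on $\cM_j$. Faithfulness of $\varphi$ forces $p_\varphi=1$, so Theorem \ref{Thm=Modular} applies and gives, for every bounded sequence and every $t\in\mathbb{R}$,
\[
\sigma^\varphi_t((x_j)_\omega)=(\sigma^{\varphi_j}_t(x_j))_\omega .
\]
Since each $\sigma^{\varphi_j}_t$ is isometric, the right-hand side is again represented by a bounded sequence, so it lies in $\cM_0$; applying this to $\pm t$ yields $\sigma^\varphi_t(\cM_0)=\cM_0$ for all $t$. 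In other words, \emph{any} nsf weight on $\cM$ that is an ultraproduct of nsf weights must have a modular automorphism group normalizing $\cM_0$.

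It therefore suffices to produce a single $\cM=\prod_{j,\omega}\cM_j$ carrying an nsf weight $\varphi$ whose modular flow fails to normalize $\cM_0$. I would look for $\varphi$ among weights with a spatial, easily tracked modular flow: take infinite dimensional fibers (e.g. $\cM_j=\cB(\cH)$) so that $\cM_0\subsetneq\cM$, and seek an nsf weight $\varphi$ whose modular automorphism group has the form $\sigma^\varphi_t=\Ad(h^{it})$ for a positive, non-singular, self-adjoint density $h$ affiliated with $\cM$ and realized via the Radon--Nikodym/cocycle machinery of \cite{TakII}. If $h=(h_j)_\omega$ were itself sequential then $\varphi$ would simply be the ultraproduct of the corresponding fibre weights and nothing is gained; the point is to take $h$ genuinely non-sequential, with bounded transform $F(h)\in\cM\setminus\cM_0$ arising as a strong but not norm limit of sequences, and arranged so that $h^{it_0}$ carries some $(x_j)_\omega\in\cM_0$ out of $\cM_0$ for one value $t_0$. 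Feeding such a $\varphi$ into the previous paragraph contradicts $\sigma^\varphi_{t_0}(\cM_0)=\cM_0$, proving that $\varphi$ is not an ultraproduct of nsf weights.

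The main obstacle is exactly this last construction: exhibiting a concrete positive non-singular $h$ affiliated with $\cM$ for which the weight $\varphi=\varphi(h\,\cdot\,)$ is semi-finite and faithful and, crucially, for which $\{h^{it}\}$ \emph{provably} does not normalize $\cM_0$. The subtlety is that $h\notin\cM_0$ does not by itself force $\Ad(h^{it})$ to move $\cM_0$, since a non-sequential unitary could still normalise $\cM_0$; one must verify non-normalisation explicitly, for instance by producing an $(x_j)_\omega\in\cM_0$ together with a direct proof that $h^{it_0}(x_j)_\omega h^{-it_0}$ is not representable by any bounded sequence. Establishing this non-representability, which is the weight-theoretic analogue of the fact underlying \cite{AndoHaagerup} that the strong closure in the definition of $\cM$ is genuinely needed, is where the real work lies.
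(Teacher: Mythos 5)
Your necessary condition is correct as far as it goes: if $\varphi=(\varphi_j)_\omega$ is nsf then $p_\varphi=1$, Theorem \ref{Thm=Modular} applies, and since each $\sigma^{\varphi_j}_t$ is isometric the modular flow of $\varphi$ must preserve the set $\cM_0$ of sequentially representable elements. But the proof stops there: the entire burden of the proposition is the existence of an nsf weight violating the constraint, and you explicitly defer that construction (``where the real work lies''). As you yourself observe, $h\notin\cM_0$ does not imply that $\mathrm{Ad}(h^{it})$ moves $\cM_0$, so producing a density $h$ together with an element $(x_j)_\omega\in\cM_0$ and a \emph{proof} that $h^{it_0}(x_j)_\omega h^{-it_0}$ admits no bounded-sequence representative is a genuinely nontrivial open step, not a routine verification. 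Without it there is no proof.

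The paper extracts a much cheaper rigidity constraint from the hypothesis $\varphi=(\varphi_j)_\omega$, one that requires no modular theory and is trivially violated. Namely: if $x\in\mathfrak{m}_\varphi^+$ then $\varphi_x^{(-1/2)}\in\cM_\ast^+=\prod_{j,\omega}(\cM_j)_\ast$, so one can write $\varphi_x^{(-1/2)}=(\varphi_{j,x_j}^{(-1/2)})_\omega$ with $x_j\in\mathfrak{m}_{\varphi_j}^+$; applying Lemma \ref{Lem=SymmetricUltra} to the truncations $G_\lambda(x_j)$ and using faithfulness of $\varphi$ gives $(G_\lambda(x_j))_\omega\leq x$ with equality once $\lambda>\Vert x\Vert$. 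Hence every element of $\mathfrak{m}_\varphi^+$ is forced to lie in $\cM_0$. Since by \cite[Remark 3.6]{AndoHaagerup} there exist $x\in\cM^+\setminus\cM_0$, and one can always choose an nsf weight $\varphi$ with $x\in\mathfrak{m}_\varphi^+$, the contradiction is immediate. The moral is that the positivity/predual structure already pins down ultraproduct weights tightly enough; invoking Theorem \ref{Thm=Modular} trades an easy obstruction for a hard one. If you want to salvage your route, you would still need to carry out the non-normalisation construction, and I do not see how to do that without essentially reproving something at least as strong as the $\mathfrak{m}_\varphi^+\subseteq\cM_0$ obstruction.
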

\begin{proof}
Suppose that there exists $x \in \cM^+$ that is not of the form $(x_j)_\omega$, i.e. it is not contained in the Banach space ultraproduct of the $\cM_j$'s (see \cite[Remark 3.6]{AndoHaagerup} for an example). Suppose that $\varphi$ is a normal, semi-finite, faithful weight on $\cM$ such that $x \in \mphi^+$ (note that such a weight always exists), then $\varphi$ cannot be written as an ultraproduct $(\varphi_j)_\omega$ of normal, semi-finite and faithful weights $\varphi_j$ on $\cM_j$. Indeed as $x \in \mphi^+$ we have $\varphi_x^{(-1/2)} \in \cM_\ast^+$ which we write as $\varphi_x^{(-1/2)} = (\varphi_{j, x_j}^{(-1/2)} )_\omega$ with $x_j \in \mathfrak{m}_{\varphi_j}^+$. Let $G_\lambda$ be a continuous function such that $0 \leq G_\lambda \leq 1$ with $G_\lambda(s) =1$ for $s \in [0, \lambda]$ and $G_\lambda(s)  = 0$ for $s \in [2\lambda, \infty)$.  Because by Lemma \ref{Lem=SymmetricUltra} we have  $\varphi^{(-1/2)}_{ G_\lambda(x_j)_\omega  } = (\varphi^{(-1/2)}_{j, G_\lambda(x_j)  })_\omega \leq (  \varphi^{(-1/2)}_{j, x_j}  )_\omega = \varphi^{(-1/2)}_{x}$ and $\varphi$ is faithful it follows that  $G_\lambda(x_j)_\omega \leq x$ and for $\lambda > \Vert x \Vert$ we get equality $(G_\lambda(x_j))_\omega = x$. This contradicts that $x$ cannot be written as an element  $(x_j)_\omega$.
\end{proof}

The following proposition shows that sometimes ultraproduct weights are faithful. Note that we do not claim that $\tau$ in the next proposition is semi-finite, in fact this is not true as for suitable ultrafilters it would contradict \cite[Proposition 1.14]{Raynaud}.

\begin{prop}\label{Prop=Faithful}
Let $\cM_j$ be type I von Neumann algebras. Let $\tau_j: \cM_j \rightarrow [0, \infty]$ be the trace that is determined by $\tau_j(p) = 1$ for every atomic projection $p \in \cM_j$. Then $\tau := (\tau_j)_\omega$ is faithful on $\prod_{j, \omega} \cM_j$.
\end{prop}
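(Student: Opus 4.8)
The plan is to prove faithfulness directly by showing that every nonzero $x\in\cM^+$ fails to be annihilated by $\tau$. Recall from Definition \ref{Dfn=Ultraproduct} that $\tau=\sup_{\rho\in\mathcal{F}}\rho$, where $\mathcal{F}$ consists of the ultraproducts $(\rho_j)_\omega$ with $\rho_j\in(\cM_j)_\ast^+$, $\rho_j\le\tau_j$, and $\|\rho_j\|$ bounded in $j$. Hence it suffices to exhibit, for each $0\ne x\in\cM^+$, a single functional $\rho\in\mathcal{F}$ with $\langle\rho,x\rangle>0$, since then $\langle\tau,x\rangle\ge\langle\rho,x\rangle>0$.

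The key observation, and the place where the hypotheses on $\cM_j$ and $\tau_j$ enter, is that \emph{every} normal state on $\cM_j$ is automatically dominated by $\tau_j$. Indeed, since $\cM_j$ is atomic type I, every nonzero projection $q\in\cM_j$ dominates an atomic projection and therefore satisfies $\tau_j(q)\ge 1$, whereas any state $\omega_j$ has $\omega_j(q)\le 1$; thus $\omega_j(q)\le\tau_j(q)$ for all projections $q$ (trivially when $q=0$). I would then upgrade this inequality to all of $\cM_j^+$ by the layer-cake (distribution-function) representation: for $a\in\cM_j^+$ with spectral projections $q_\lambda=\chi_{(\lambda,\infty)}(a)$ one has $\omega_j(a)=\int_0^\infty\omega_j(q_\lambda)\,d\lambda$ and likewise $\tau_j(a)=\int_0^\infty\tau_j(q_\lambda)\,d\lambda$, so the pointwise domination of integrands gives $\omega_j(a)\le\tau_j(a)$. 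Consequently $\omega_j\le\tau_j$ for every normal state $\omega_j$ on $\cM_j$.

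With this in hand the proof closes quickly. Given $0\ne x\in\cM^+$ acting on $\cH=\prod_{j,\omega}\cH_j$, positivity yields a unit vector $\xi=(\xi_j)_\omega$ with $\langle x\xi,\xi\rangle>0$; normalizing, we may take $\|\xi_j\|=1$. The vector functional $\omega_{\xi,\xi}\in\cM_\ast^+$ corresponds, under the identification $\cM_\ast\simeq\prod_{j,\omega}(\cM_j)_\ast$, to $(\omega_{\xi_j,\xi_j})_\omega$, where each $\omega_{\xi_j,\xi_j}$ is a normal state. By the previous paragraph $\omega_{\xi_j,\xi_j}\le\tau_j$, and $\|\omega_{\xi_j,\xi_j}\|=1$ is bounded, so $\rho:=(\omega_{\xi_j,\xi_j})_\omega\in\mathcal{F}$. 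Since $\langle\rho,x\rangle=\langle x\xi,\xi\rangle>0$, we obtain $\langle\tau,x\rangle>0$, i.e. $\tau$ is faithful.

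I expect the main obstacle to be solely the justification of the domination $\omega_j\le\tau_j$: specifically, the clean passage from projections to arbitrary positive elements, and making sure the hypothesis ``type I with $\tau_j=1$ on atomic projections'' is used in the precise form ``every nonzero projection has $\tau_j\ge 1$'', which implicitly requires $\cM_j$ to be atomic so that atomic projections are abundant. The remaining ingredients --- detecting $x$ by a vector functional and decomposing that functional as an ultraproduct via $\cM_\ast\simeq\prod_{j,\omega}(\cM_j)_\ast$ --- are routine given the facts already recorded in the paper.
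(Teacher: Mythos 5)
Your proof is correct, and it takes a genuinely different route from the paper's. The paper argues by contradiction: it first treats elementary elements $0\le x=(x_j)_\omega$ by compressing with ultraproducts of atomic projections, reducing to corners $(p_j)_\omega\cM(p_j)_\omega\simeq\prod_{j,\omega}p_j\cM_jp_j\simeq\mathbb{C}$ (Lemma \ref{Lem=Atom}) and extracting the top of the spectrum of each $x_j$; it then handles a general $x\in\cM^+$ by a second compression argument. You instead prove the stronger structural fact that $\tau_j$ dominates \emph{every} normal state on $\cM_j$ (the abstract version of ``every density matrix is $\le 1$''), so that every vector state $\omega_{\xi,\xi}=(\omega_{\xi_j,\xi_j})_\omega$ on $\cM$ already lies in the defining family $\mathcal{F}$ of Definition \ref{Dfn=Ultraproduct}, and faithfulness drops out in one line since $\tau\ge\rho$ for each $\rho\in\mathcal{F}$ by construction. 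Your two supporting steps are sound: in an atomic algebra every nonzero projection dominates a minimal one by standard comparison theory (if $e$ is minimal with $eq\ne 0$ then $e\wedge q^\perp=0$, so $q-q\wedge e^\perp\sim e$ is a nonzero minimal subprojection of $q$), whence $\tau_j(q)\ge 1\ge\omega_j(q)$; and the passage from projections to all of $\cM_j^+$ via the layer-cake formula is legitimate for the normal weight $\tau_j$ because the lower Riemann sums $\sum_i(\lambda_{i+1}-\lambda_i)\chi_{(\lambda_{i+1},\infty)}(a)$ form a bounded increasing net with supremum $a$, so normality gives $\tau_j(a)=\int_0^{\Vert a\Vert}\tau_j(\chi_{(\lambda,\infty)}(a))\,d\lambda$. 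Both proofs use atomicity in the same essential way (the paper's via abundance of atomic projections, yours via ``every nonzero projection has trace $\ge 1$''), but your argument handles all of $\cM^+$ uniformly, avoids the slightly delicate ``largest eigenvalue'' step of the paper, and yields as a by-product the stronger conclusion that $\tau$ dominates every vector state on the ultraproduct.
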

\begin{proof}
We first claim that for elementary elements $0 \leq x := (x_j)_\omega \in \prod_{j, \omega} \cM_j$ we have $\tau(x) = 0$ implies $x = 0$. Indeed, we may assume that $x_j \geq 0$.
 If $\tau(x)=0$ then this implies that for every series of atomic projections $(p_j)_\omega$ we have $\langle (p_j \tau_j p_j)_\omega, (x_j)_\omega\rangle = 0$. But this implies that $\lim_{j, \omega} p_j x_j p_j = 0$. So that $(p_j)_\omega (x_j)_\omega (p_j)_\omega = 0$. Let $\lambda_j$ be the largest eigenvalue of $x_j$ and let $p_j'$ be the corresponding spectral projection. Let $p_j$ be an atomic subprojection of $p_j'$. Then $0 = (p_j)_\omega (x_j)_\omega (p_j)_\omega  = (p_j x_j p_j)_\omega = (\lambda_j p_j)_\omega$. So $\lim_{j, \omega} \lambda_j =0$ and therefore $(x_j)_\omega = 0$.

 Now let $0 \leq x \in \prod_{j, \omega} \cM_j$ be arbitrary such that $\tau(x) = 0$. Suppose that $x \not = 0$. Let $(p_j)_\omega$ be a series of atomic projections such that $x(p_j)_\omega$ is non-zero. Then also  $(p_j)_\omega x(p_j)_\omega$ is non-zero and moreover this element is contained in $(p_j)_\omega \left(  \prod_{j, \omega} \cM_j \right) (p_j)_\omega \simeq  \prod_{j, \omega} p_j \cM_j p_j$, so it falls in the case of the first paragraph. Then $0 = \tau(x) \geq \tau( (p_j)_\omega x(p_j)_\omega) \not = 0$, which is a contradiction. Hence $x = 0$.
 \end{proof}

\begin{rmk}
In view of Theorem \ref{Thm=Lp} it would be interesting to know if for every ultraproduct $\cM = \prod_{j, \omega} \cM_j$ we can find some normal, semi-finite, faithful weights $\varphi_j$ on $\cM_j$ such that the ultraproduct $(\varphi_j)_\omega$ is again normal, semi-finite and faithful. Taking into account the previous propositions we do not expect that this holds.
\end{rmk}

\section{Applications to noncommutative $L^p$-spaces}\label{Sect=Lp}

In this section we prove Theorem C of the introduction. 
We  need the following version of the Powers--St\o{}rmer inequality. A proof can be found in \cite[Lemma 2.2]{CPPR}. Note that this lemma only holds for semi-finite von Neumann algebras. The general case can be deduced using \cite[Proposition 7 and Lemma B]{KosAbs}.

\begin{prop}[Powers--St\o{}rmer inequality]\label{Prop=PowersStormer}
Let $\cN$ be an aribtrary (not necessarily semi-finite) von Neumann algebra. We have for $x,y \in L^1(\cN)^+$ that
\[
\Vert x^{\frac{1}{p}} - y^{\frac{1}{p}} \Vert_p^p \leq \Vert x - y \Vert_1.
\]
\end{prop}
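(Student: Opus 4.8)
The plan is to prove the inequality first for semi-finite $\cN$ and then to bootstrap to the general case through Haagerup's core construction, matching the two citations attached to the statement.

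\emph{The semi-finite case.} When $\cN$ carries a normal, semi-finite, faithful trace $\tau$, the space $L^1(\cN)^+$ is the usual tracial $L^1$-cone, and writing $a=x^{1/p},\,b=y^{1/p}\ge 0$ the claim is equivalent to the trace inequality ${\rm Tr}(|a-b|^p)\le{\rm Tr}(|a^p-b^p|)$, i.e. $\tau(|a-b|^p)\le\tau(|a^p-b^p|)$. I would prove this by the standard Powers--St\o rmer device: take the Jordan decomposition $a-b=c_+-c_-$ with orthogonal support projections $e,f$, so that $|a-b|^p=c_+^p+c_-^p$ and $\tau(|a-b|^p)=\tau(c_+^p)+\tau(c_-^p)$. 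On the corner $e\cN e$ one has $c_+=eae-ebe\le eae$, and the trace Jensen (Hansen--Pedersen--Brown--Kosaki) inequality for the convex function $t\mapsto t^p$ ($p\ge1$), together with the order monotonicity $0\le S\le T\Rightarrow\tau(S^p)\le\tau(T^p)$, bounds $\tau(c_+^p)$; symmetrically for $\tau(c_-^p)$. Collecting these against the self-adjoint contraction $e-f$ and using $\tau((e-f)(a^p-b^p))\le\tau(|a^p-b^p|)$ closes the semi-finite case. These details are exactly \cite[Lemma 2.2]{CPPR}, which I would simply invoke.

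\emph{Reduction to the semi-finite case.} For general $\cN$, fix a normal, semi-finite, faithful weight $\varphi$ and realize $L^p(\cN)$ inside the semi-finite core $\cR=\cN\rtimes_{\sigma^\varphi}\mathbb{R}$: a positive $x\in L^1(\cN)^+$ becomes a positive self-adjoint $\tau_\cR$-measurable operator affiliated with $\cR$, homogeneous of degree $-1$ for the dual action, its root $x^{1/p}$ is the genuine operator $p$-th root (homogeneous of degree $-1/p$, hence in $L^p(\cN)^+$), and one has ${\rm Tr}(|x^{1/p}-y^{1/p}|^p)=\|x^{1/p}-y^{1/p}\|_p^p$ and ${\rm Tr}(|x-y|)=\|x-y\|_1$. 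Thus the asserted bound is once more ${\rm Tr}(|a-b|^p)\le{\rm Tr}(|a^p-b^p|)$ for the measurable operators $a=x^{1/p},\,b=y^{1/p}$ living in the semi-finite $\cR$, the only difference being that the ambient trace is now Haagerup's distinguished trace functional ${\rm Tr}$ on $L^1(\cN)$. Since the semi-finite argument is built solely from the Jordan decomposition, functional calculus and the order monotonicity of the trace functional --- all meaningful for ${\rm Tr}$ on $L^1(\cN)^+$ --- the same chain of estimates applies, provided one knows that ${\rm Tr}$ interacts with compressions and with $t\mapsto t^p$ as required. This compatibility is precisely what \cite[Proposition 7 and Lemma B]{KosAbs} supply, and it is the content of the sentence following the statement.

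\emph{The main obstacle} is this final transfer. The functional ${\rm Tr}$ is finite only on $L^1(\cN)$ and is not the restriction of the core trace $\tau_\cR$ (which is infinite on the degree-homogeneous operators at hand), so one cannot literally apply the tracial $L^p(\cR,\tau_\cR)$-inequality to $a$ and $b$. The real work lies in verifying, through the Kosaki dictionary, that the trace-Jensen step and the implication $0\le S\le T\Rightarrow{\rm Tr}(S^p)\le{\rm Tr}(T^p)$ remain valid when ${\rm Tr}$ is Haagerup's trace functional on $L^1(\cN)^+$ rather than a genuine normal semi-finite trace; everything else is routine once this is in place.
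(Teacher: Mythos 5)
Your proposal matches the paper's treatment: the paper offers no argument of its own beyond citing [CPPR, Lemma 2.2] for the semi-finite case and [KosAbs, Proposition 7 and Lemma B] for the passage to general $\cN$, which is exactly the two-step route you outline. Your identification of the real technical point --- that ${\rm Tr}$ on $L^1(\cN)$ is not the restriction of the core trace $\tau_{\mathcal{R}}$, so the order-monotonicity and trace-Jensen steps must be re-verified through Kosaki's approximation lemmas rather than applied verbatim --- is the correct reading of why precisely those two references are invoked.
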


Let $\psi$ be a weight on the commutant $\cN'$ of a von Neumann algebra $\cN$ which we assume is in standard form. Recall that the opposite weight is defined as the weight $\psiop$ on $\cN$ that is given by $\psiop(x^\ast x) = \psi(z^\ast z)$ where $z = J x^\ast J$.

\begin{thm}\label{Thm=Lp}
 Let $\psi_j$ be normal, semi-finite, faithful weights on $\cM_j'$ with ultraproduct $\psi = (\psi_j)_\omega$ which we assume to be semi-finite. Let $r_\psi$ be the support of $\psi$ and let $p_\psi = J_\omega r_\psi J_\omega$ and $q_\psi = p_\psi r_\psi$.   We have an isomorphism,
\[
 p_\psi  \left( \prod_{j, \omega} L^p(\cM_j, \psi_j) \right) p_\psi\quad \rightarrow\!\!\!\!\!\!\!^\simeq  \quad L^p( q_\psi  \cM q_\psi, \psi ),
\]
that is determined by
\[
\left(\frac{d \rho_j}{d \psi_j} \right)_\omega^{\frac{1}{p}} \mapsto  \left( \frac{d \rho}{d \psi} \right)^{\frac{1}{p}},
\]
where $\rho = (\rho_j)_\omega$ with $\rho_j \in (\cM_j)_\ast$ uniformly bounded in $j$ and such that $p_\rho \leq p_\psi$. 
\end{thm}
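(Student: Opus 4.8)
The plan is to build the isomorphism on the positive generators first, to identify it with the canonical corner isomorphism via Theorem \ref{Thm=OneMoreSpatial}, to reduce the isometry statement to the case $p=1$, and then to propagate to arbitrary $p$ by taking $p$-th roots, using the Powers--Størmer inequality (Proposition \ref{Prop=PowersStormer}) to control this passage. First I would fix $\rho=(\rho_j)_\omega\in\cM_\ast^+$ with $\rho_j$ uniformly bounded and $p_\rho\le p_\psi$. Each $\rho_j$ is a bounded normal, hence semi-finite, weight, so Theorem \ref{Thm=OneMoreSpatial} applies and gives
\[
\frac{d\rho}{d\psi}=q_\psi\left(\frac{d\rho_j}{d\psi_j}\right)_\omega q_\psi ,
\]
with $q_\psi$ commuting with $\left(\frac{d\rho_j}{d\psi_j}\right)_\omega$ and disjoint from its infinite projection (exactly as in the proof of Theorem \ref{Thm=Modular}). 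Hence functional calculus may be applied termwise on $q_\psi(\cH)$ to yield $\left(\frac{d\rho}{d\psi}\right)^{1/p}=q_\psi\left(\frac{d\rho_j}{d\psi_j}\right)^{1/p}_\omega q_\psi$. Since $\left(\frac{d\rho_j}{d\psi_j}\right)^{1/p}_\omega$ is affiliated with $\cM$ and therefore commutes with $r_\psi\in\cM'$, this equals $r_\psi\bigl[p_\psi\left(\frac{d\rho_j}{d\psi_j}\right)^{1/p}_\omega p_\psi\bigr]r_\psi$. Thus the prescribed assignment is nothing but the compression $y\mapsto r_\psi y r_\psi$, i.e. the corner isomorphism $p_\psi\cM p_\psi\simeq q_\psi\cM q_\psi$ of \cite[Lemma 3.26]{AndoHaagerup} realised at the level of $L^p$.

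For the isometry I would go through $p=1$. Under $\prod_{j,\omega}L^1(\cM_j)\simeq\prod_{j,\omega}(\cM_j)_\ast\simeq\cM_\ast$ the generator $\left(\frac{d\rho_j}{d\psi_j}\right)_\omega$ corresponds to $\rho$, its compression to $\rho|_{p_\psi\cM p_\psi}\in(p_\psi\cM p_\psi)_\ast$, and the target $\frac{d\rho}{d\psi}$ to $\rho|_{q_\psi\cM q_\psi}\in(q_\psi\cM q_\psi)_\ast$; the corner isomorphism makes these correspond isometrically, so the $p=1$ case is transparent. For general $p$ I would pass to $p$-th roots: the positive cone of $L^p$ is exactly $\{a^{1/p}:a\in L^1,\,a\ge 0\}$ with $\|a^{1/p}\|_p^p=\|a\|_1$, so the $p=1$ isometry of positive cones transports to level $p$ with matching norms $\bigl\|\left(\tfrac{d\rho}{d\psi}\right)^{1/p}\bigr\|_p=\rho(1)^{1/p}=\lim_{j,\omega}\rho_j(1)^{1/p}$. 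The Powers--Størmer inequality guarantees that $a\mapsto a^{1/p}$ is uniformly continuous, and this is precisely what lets the clean $L^1$ statement descend to arbitrary $p$.

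To finish I would treat density and surjectivity. Since $\cM_\ast\simeq\prod_{j,\omega}(\cM_j)_\ast$, every normal functional on $q_\psi\cM q_\psi$ lifts to an admissible $\rho=(\rho_j)_\omega$ with $p_\rho\le p_\psi$, so the generators already exhaust the positive cone of $L^p(q_\psi\cM q_\psi)$, giving density of the range. Finite linear combinations of positive generators are dense in both $p_\psi\bigl(\prod_{j,\omega}L^p(\cM_j)\bigr)p_\psi$ and $L^p(q_\psi\cM q_\psi)$, and the Powers--Størmer estimate supplies the continuity needed to extend the densely defined, norm-preserving map to an isometric isomorphism of the full spaces, following the standard technique (cf. \cite{CPPR}).

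The main obstacle I anticipate is concentrated in the first two steps and stems from the fact that $p_\psi$ and $q_\psi$ are in general \emph{not} elementary projections of the form $(e_j)_\omega$. Consequently the compressed generator $p_\psi\left(\frac{d\rho_j}{d\psi_j}\right)^{1/p}_\omega p_\psi$ is not a priori an elementary $L^p$-sequence, and its norm must be accessed through Theorem \ref{Thm=OneMoreSpatial} (or, more concretely, by approximating $p_\psi$ strongly by elementary nets furnished by Lemma \ref{Lem=SubtleLemma} applied to $\psi$ and conjugated by $J_\omega$, and then using the support hypothesis $p_\rho\le p_\psi$). A second, related, difficulty is that raising to the $p$-th power does not commute with the ultraproduct construction in the extended positive cone; this is overcome precisely by the invariance of $q_\psi(\cH)$ and its disjointness from the infinite projection of $\left(\frac{d\rho_j}{d\psi_j}\right)_\omega$ recorded in Theorem \ref{Thm=OneMoreSpatial}, which legitimises the termwise functional calculus used at the outset.
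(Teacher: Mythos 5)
Your proposal follows essentially the same route as the paper's proof: apply Theorem \ref{Thm=OneMoreSpatial} to the positive generators to identify $q_\psi\left(\frac{d\rho_j}{d\psi_j}\right)_\omega q_\psi$ with $\frac{d\rho}{d\psi}$ (together with the invariance of $q_\psi(\cH)$ and the disjointness from the infinite projection, which legitimises the termwise $p$-th roots), compute the norm as $\rho(1)^{1/p}=\lim_{j,\omega}\rho_j(1)^{1/p}$, and use Powers--St\o{}rmer to reduce both density claims to the $L^1$/predual level via $\cM_\ast\simeq\prod_{j,\omega}(\cM_j)_\ast$, exactly as in the paper's Claims 1 and 2. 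One harmless misstatement: $\left(\frac{d\rho_j}{d\psi_j}\right)^{1/p}_\omega$ is $(-1/p)$-homogeneous, hence \emph{not} affiliated with $\cM$ and does not commute with $r_\psi$; the identity $q_\psi Xq_\psi=r_\psi\bigl(p_\psi Xp_\psi\bigr)r_\psi$ that you want holds anyway simply because $p_\psi\in\cM$ and $r_\psi\in\cM'$ commute with each other.
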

\begin{proof}
Throughout this proof we shall refer to the following special types of elements.
\begin{enumerate}
\item[($\ast$)] Let $A_j \in L^p(\cM_j)^+$ be a  sequence bounded in $j$ in the $\Vert \: \cdot \: \Vert_p$-norm and of the form
\[
A_j = \left(   \frac{d \rho_j}{ d \psi_j}\right)^{\frac{1}{p}}. 
\]
with moreover $\rho_j \in (\cM_j)_\ast$ bounded uniformly in $j$ and such that $\rho = (\rho_j)_\omega$ has support $p_\rho \leq p_\psi$. Set $A = (A_j)_\omega$.
\end{enumerate}




We may view $A$ as in ($\ast$) as an element of $\prod_{j, \omega} L^p(\cM_j, \psi_j)$ because it follows that $\Vert A_j \Vert_p = \Vert A_j^p \Vert_1^{1/p}$ is bounded uniformly in $j$.

On the other hand we have $A^p = (A_j^p)_\omega$ by Lemma \ref{Lem=FunctionaCalculus}.  We may apply   Theorem \ref{Thm=OneMoreSpatial} which shows that $q_\psi(\cH)$ is an invariant subspace for $(A_j^p)_\omega$ on which  $(A_j^p)_\omega$ acts as an unbounded operator, namely  $\frac{d \rho}{ d \psi}$.  This shows that   $q_\psi (A_j)_\omega  q_\psi \in L^p( q_\psi  \cM q_\psi, \psi)$ with
\[
\Vert  q_\psi (A_j)_\omega   q_\psi \Vert_{L^p(\cM, \psi)}^p = \rho(1)  = \lim_{j, \omega} \rho_j(1) = \lim_{j, \omega} \Vert A_j \Vert_{L^p(\cM_j, \psi_j)}^p =
  \Vert (A_j)_\omega \Vert_{ \prod_{j, \omega} L^p(\cM_j, \psi_j) }^p .
\]
So the proof so far shows that $A \mapsto q_\psi A q_\psi$  gives an isometry from the closed linear span in $\prod_{j, \omega} L^p(\cM_j, \psi_j)$ of operators $A$ as in ($\ast$)  to the space $L^p( q_\psi  \cM q_\psi, \psi )$. We need to prove that such operators $A$ are dense in $p_\psi \left( \prod_{j, \omega} L^p(\cM_j, \psi_j) \right) p_\psi$ and that the range equals $L^p( q_\psi  \cM q_\psi, \psi )$.

\vspace{0.3cm}

\noindent {\bf Claim 1.} We claim that the $A$'s from ($\ast$) are densely contained in $p_\psi \left( \prod_{j, \omega} L^p(\cM_j, \psi_j) \right) p_\psi$. 

\noindent {\it Proof of Claim 1.} Firstly let us show that the elements $A$ from ($\ast$) are indeed contained in  $\prod_{j, \omega} L^p(\cM_j, \psi_j)$. To do so it suffices to show that $(A_j^p)_\omega$ is in $p_\psi \prod_{j, \omega} L^1(\cM_j, \psi_j) p_\psi$, which is equivalent to showing that $(\rho_j)_\omega$ is in $p_\psi \left( \prod_{j, \omega} (\cM_j)_\ast \right) p_\psi$ (as the correspondence $\prod_{j, \omega} (\cM_j)_\ast  \simeq \prod_{j, \omega} L^1(\cM_j)$ preserves the left and right module action of $\prod_{j, \omega} \cM_j$ ). But the latter is true by assumption, see ($\ast$).

It remains to show density. By Powers--St\o{}rmer, c.f. Proposition \ref{Prop=PowersStormer}, it suffices to show density of the elements $A^p$ with $A$ as in ($\ast$) in $p_\psi \left( \prod_{j, \omega} L^1(\cM_j, \psi_j)\right) p_\psi$.  Let $\varphi$ be the opposite weight of $\psi$. We have $p_\varphi = p_\psi$ for the supports. The set $\varphi_x^{(-1/2)}$ with $x \in p_\varphi \mathfrak{m}_\varphi^+ p_\varphi$ is dense in $(p_\psi \cM p_\psi)_\ast^+$. So fix such an element $x \in p_\varphi \mathfrak{m}_\varphi^+ p_\varphi$.  Let $\mathcal{A}$ be the set of all series $(x_j)_\omega$ with $x_j \in \mathfrak{m}_{\varphi_j}^+$ such that $\varphi_{j,x_j}^{(-1/2)}$ is bounded uniformly in $j$ and $\varphi_{(x_j)_\omega}^{(-1/2)} \leq \varphi_{x}^{(-1/2)}$. Recall also that $(\varphi_{x_j }^{(-1/2)})_\omega = \varphi_{(x_j)_\omega}^{(-1/2)}$ by Lemma \ref{Lem=SymmetricUltra}.   We have $p_\psi (x_j)_\omega p_\psi \leq p_\psi x p_\psi$ and in fact exactly as in the proof of Lemma \ref{Lem=AnotherApproximation} one can show that $\sup_{a \in \mathcal{A}} p_\psi a p_\psi = p_\psi x p_\psi$. Then also $\sup_{a \in \mathcal{A}} \varphi_{a}^{(-1/2)} = \varphi_x^{(-1/2)}$. This shows that the functionals $\varphi_a^{(-1/2)}$ with $a \in \mathcal{A}$ are dense in $(p_\psi \cM p_\psi)_\ast^+$. Now use that we have an isometric correspondence $(p_\psi \cM p_\psi)_\ast^+ \simeq  p_\psi \left( \prod_{j, \omega} (\cM_j)_\ast^+\right) p_\psi \simeq p_\psi \left( \prod_{j, \omega} L^1(\cM_j, \psi_j)^+\right) p_\psi$ under which the functional $\varphi_{a}^{(-1/2)}, a = (a_j)_\omega \in \mathcal{A}$ corresponds to $A_j^p := \left( \frac{d \varphi_{j, a_j}}{d \psi_j} \right)$. As $A = (A_j)_\omega$ clearly satisfies $(\ast)$ we conclude our claim.

\vspace{0.3cm}

\noindent {\bf Claim 2.} We claim that the $q_\psi A q_\psi$'s from ($\ast$) are dense in $L^p( q_\psi  \cM q_\psi, p_\psi )$.

\noindent {\it Proof of Claim 2.} The proof is essentially the same as in Claim 1.   By the Powers--St\o{}rmer inequality it suffices to show that $(q_\psi A q_\psi)^p = q_\psi A^p q_\psi$ with $A$ as in ($\ast$) is dense in $L^1( q_\psi  \cM q_\psi, \psi )^+$. Then it suffices to show the density in $(q_\psi \cM q_\psi)_\ast^+$ of all positive normal functionals on $q_\psi \cM q_\psi$ that are of the form $(\varphi_{j,x_j}^{(-1/2)} )_\omega$ where $x_j \in \mathfrak{m}_{\varphi_j}^+$ are such that both $x_j$ and $\varphi_{j,x_j}^{(-1/2)}$ are bounded uniformly in $j$. This is exactly what the previous paragraph shows.

\end{proof}

In particular we give a new proof of the following result due to Raynaud, see \cite[Theorem 3.1]{Raynaud}.

\begin{cor}\label{Cor=Lp}
Let $\cM_j$ be von Neumann algebras and let $\cM = \prod_{j, \omega} \cM_j$.
For every $p \in [1, \infty]$ we have an isomorphism,
\[
L^p(   \cM   ) \simeq  \prod_{j, \omega} L^p(\cM_j).
\]
\end{cor}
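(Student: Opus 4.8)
The plan is to assemble the global isomorphism out of the corner isomorphisms furnished by Theorem \ref{Thm=Lp}, by covering the unit of $\cM$ with opposite supports of semi-finite ultraproduct weights on $\cM'$. I treat only $p\in[1,\infty)$; the endpoint $p=\infty$ is the identity $L^\infty(\cM)=\cM$, which is the definition of the Groh--Raynaud ultraproduct (reading the right-hand side as $\cM$).

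First I would build the covering. Since $\cM'=\prod_{j,\omega}\cM_j'$ and $\cM'_\ast\simeq\prod_{j,\omega}(\cM_j')_\ast$, every normal state on $\cM'$ is an ultraproduct state, hence a semi-finite ultraproduct weight. I would take a maximal family $\{p^{(\alpha)}\}_\alpha$ of mutually orthogonal nonzero projections in $\cM$, each of the form $p^{(\alpha)}=p_{\psi^{(\alpha)}}=J_\omega r_{\psi^{(\alpha)}}J_\omega$ for a semi-finite ultraproduct state $\psi^{(\alpha)}=(\psi^{(\alpha)}_j)_\omega$ on $\cM'$ with each $\psi^{(\alpha)}_j$ normal, semi-finite, faithful (assuming the $\cM_j$ are $\sigma$-finite, as elsewhere, so that faithful normal states on $\cM_j'$ exist). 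Maximality forces $\sum_\alpha p^{(\alpha)}=1$: if $e:=1-\sum_\alpha p^{(\alpha)}\neq 0$, set $e'=J_\omega eJ_\omega\in\cM'$, pick $0\neq\eta\in e'\cH$, and form $\chi=\omega_{\eta}$ on $\cM'$; its support satisfies $r_\chi\le e'$, and writing $\chi=(\chi_j)_\omega$ and replacing $\chi_j$ by $\chi_j+\epsilon_j\omega_j'$ with $\omega_j'$ faithful normal states on $\cM_j'$ and $\lim_{j,\omega}\epsilon_j=0$ (which leaves the ultraproduct unchanged) yields a semi-finite ultraproduct state whose opposite support is a nonzero projection $\le e$, contradicting maximality. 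For each finite $S$ put $e_S=\sum_{\alpha\in S}p^{(\alpha)}$ and $\psi^{(S)}=\sum_{\alpha\in S}\psi^{(\alpha)}=(\sum_{\alpha\in S}\psi^{(\alpha)}_j)_\omega$; orthogonality of the $r_{\psi^{(\alpha)}}$ makes $\psi^{(S)}$ a semi-finite ultraproduct weight with opposite support $e_S$, and $e_S\uparrow 1$.

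Next I would apply Theorem \ref{Thm=Lp} with $\psi=\psi^{(S)}$. Composing with the non-spatial corner isomorphism $p_{\psi^{(S)}}\cM p_{\psi^{(S)}}\simeq q_{\psi^{(S)}}\cM q_{\psi^{(S)}}$ and with the weight-independence isometry for Connes--Hilsum spaces, this gives an isometry $T_S\colon e_S\big(\prod_{j,\omega}L^p(\cM_j)\big)e_S\to e_S L^p(\cM)e_S$ determined on positive elements by $(d\rho_j/d\psi^{(S)}_j)^{1/p}_\omega\mapsto(d\rho/d\psi^{(S)})^{1/p}$ for $\rho=(\rho_j)_\omega$ uniformly bounded with $p_\rho\le e_S$. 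The essential point is compatibility: for $S\subseteq S'$ the maps $T_S$ and $T_{S'}$ must agree on $e_S(\prod L^p)e_S$. This rests on the fact that the canonical identification $L^p(\cN,\psi_1)\simeq L^p(\cN,\psi_2)$ carries $(d\rho/d\psi_1)^{1/p}$ to $(d\rho/d\psi_2)^{1/p}$ for positive $\rho$, so that the same element of the weight-free space $L^p(\cM_j)$ is described by either $\psi^{(S)}_j$ or $\psi^{(S')}_j$, and likewise on $L^p(\cM)$. I expect verifying this compatibility to be the main obstacle.

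Finally I would check density on both sides. On the domain, writing any $(\eta_j)_\omega\in\prod_{j,\omega}L^p(\cM_j)$ as a bounded combination of positive ones and putting $\rho=(\eta_j^p)_\omega\in\cM_\ast^+$, the truncations $e_S\rho e_S$ have support $\le e_S$ and converge to $\rho$ in $\cM_\ast=\prod_{j,\omega} L^1(\cM_j)$ (since $e_S\uparrow 1$ $\sigma$-strongly and $\rho$ is normal); applying Powers--St\o{}rmer (Proposition \ref{Prop=PowersStormer}) coordinatewise then gives $\Vert (d(e_S\rho e_S)/d\psi^{(S)})^{1/p}_\omega-(\eta_j)_\omega\Vert_p^p\le \Vert e_S\rho e_S-\rho\Vert_1\to 0$, so the positive elements of type $(\ast)$ span a dense subspace. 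On the target, $\bigcup_S e_S L^p(\cM)e_S$ is dense in $L^p(\cM)$ because $e_S\uparrow 1$ and $p<\infty$. Since any finite combination of type-$(\ast)$ elements lies in a single corner $e_S$ for $S$ large enough, the $T_S$ assemble to one linear isometry on a dense domain with dense range, which extends to the desired isometric isomorphism $\prod_{j,\omega}L^p(\cM_j)\simeq L^p(\cM)$.
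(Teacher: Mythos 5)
Your proposal is correct and follows essentially the same route as the paper: you cover the identity by opposite supports $p_{\psi}$ of semi-finite ultraproduct functionals on $\cM'$, apply Theorem \ref{Thm=Lp} on each corner, check that the corner isometries are compatible via the change-of-weight map $(d\rho/d\psi_1)^{1/p}\mapsto(d\rho/d\psi_2)^{1/p}$, and pass to the inductive limit. Your explicit construction of the covering net (maximal orthogonal family of opposite supports, summed over finite subsets) and the Powers--St{\o}rmer density argument merely flesh out steps the paper states more briefly.
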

\begin{proof}
For $p= \infty$ this is by definition so assume that $p \in [1, \infty)$. The proof follows by an inductive limit argument if we could show that the isomorphisms of Theorem \ref{Thm=Lp} agree on intersection taking into account the isomorphism given by a change of weight.
 
 Let $\cN$ be an arbitrary von Neumann algebra. If $\psi_1, \psi_2 \in \cN_\ast^+$ are such that $p_{\psi_1} \leq p_{\psi_2}$ then we have $\Phi_{\psi_1, \psi_2}: L^p(p_{\psi_1} \cN p_{\psi_1}, \psi_1) \subseteq L^p(p_{\psi_2} \cN p_{\psi_2}, \psi_2)$ isometrically and the isomorphism is given by  $\left( \frac{d \rho}{ d\psi_1} \right)^{\frac{1}{p}} \mapsto \left( \frac{d \rho}{ d\psi_2} \right)^{\frac{1}{p}}$ with $\rho \in \cN_\ast$ having support $p_\rho \leq p_{\psi_1}$.

Going back to ultraproducts  take $\psi_1 = (\psi_{1, j})_\omega \in \cM_\ast'$ and $\psi_2 = (\psi_{2,j})_\omega \in \cM_\ast'$ with supports $r_{\psi_1}$ and $r_{\psi_2}$ and opposite supports $p_{\psi_1}$ and $p_{\psi_2}$. We denote the supports of the $\psi_{k,j}$'s  similarly.  Assume that $p_{\psi_1} \leq p_{\psi_2}$. We may and will assume that for every $j$ we have supports $p_{\psi_{1, j}} \leq p_{\psi_{2, j}}$ (indeed one may replace $\psi_{1, j}$ by $r_{\psi_{2,j}} \psi_{1, j} r_{\psi_{2,j}}$ and as $r_{\psi_1} \leq r_{\psi_2} \leq (r_{\psi_{2,j}})_\omega$ we have $(r_{\psi_{2,j}} \psi_{1, j} r_{\psi_{2,j}})_\omega = (r_{\psi_{2,j}})_\omega ( \psi_{1, j})_\omega (r_{\psi_{2,j}})_\omega = ( \psi_{1, j})_\omega$). Now the isomorphisms so far give a commutative diagram (as is clear from their explicit prescriptions),
\[
 \xymatrix{
p_{\psi_1}\left(  \prod_{j, \omega} L^p( p_{\psi_{1,j}} \cM_j p_{\psi_{1,j}}, \psi_{1,j} ) \right) p_{\psi_1} \qquad \ar@{->}[r]^{\:\:\:\: {\rm Thm } \ref{Thm=Lp}}  \ar@{->}[d]^{\prod_{j, \omega} \Phi_{\psi_{1,j}, \psi_{2,j} }}   &   L^p(q_{\psi_1} \cM q_{\psi_1}, \psi_1)  \ar@{->}[d]^{ \Phi_{\psi_1, \psi_2}}   \\
p_{\psi_2} \left( \prod_{j, \omega} L^p( p_{\psi_{2,j}} \cM_j p_{\psi_{2,j}}, \psi_{2,j} ) \right) p_{\psi_2} \qquad \ar@{->}[r]^{\:\:\:\:  \qquad {\rm Thm } \ref{Thm=Lp}}   &   L^p(q_{\psi_2} \cM q_{\psi_2}, \psi_2).
}    
\]
So take an increasing net $\{ \psi_k \}_k$ in $\cM_\ast$ with $p_{\psi_k} \rightarrow 1$. Then taking the inductive limit of the isomorphisms  $p_{\psi_k}  \left( \prod_{j, \omega} L^p(\cM_j, \psi_{k,j}) \right) p_{\psi_k} \simeq L^p( q_{\psi_k}  \cM q_{\psi_k}, \psi_k )$ of Theorem \ref{Thm=Lp} yields the result.
 \end{proof}

\appendix

\section{Spatial derivatives on the standard form}\label{Sect=AppendixA}

We compliment this paper by showing that the spatial derivative attains a nice representation on the standard form, at least if the standard form is represented as
\begin{equation}\label{Eqn=ConcreteStandardForm}
(\cN, L^2(\cN), J, L^2(\cN)^+),
 \end{equation}
 where $J: x \mapsto x^\ast$. See \cite{TerpII} for a discussion on standard forms and $L^p$-spaces.

\vspace{0.3cm}

We fix again notation, keeping the appendix self-contained. Let $\cN$ be a von Neumann algebra. Let $\kappa'$ be some normal, semi-finite, faithful weight on $\cN'$ and set $L^p(\cN) = L^p(\cN, \kappa')$. We assume that $\cN$ is represented in standard form, ie. \eqref{Eqn=ConcreteStandardForm}. For a normal weight $\varphi$ on $\cN$ we write again $D_\varphi$ for $d\varphi/d\kappa'$.
For a normal, semi-finite, faithful weight $\varphi$ on $\cN$ we set
\[
\mathcal{I}_\varphi = \left\{ \omega \in \cN_\ast \mid \textrm{The map } \Lambda_{\varphi}(x) \mapsto \omega(x^\ast), x \in \nphi \textrm{ extends boundedly} \right\}.
 \]
  By the Riesz theorem for $\omega \in \mathcal{I}_\varphi$ there exists a vector $\xi(\omega) \in \cH_\varphi$ such that
  \[
  \langle \xi(\omega), \Lambda_\varphi(x) \rangle = \omega(x^\ast).
   \]
   It follows (see also \cite{CaspersLpf}) that $\xi(_y \varphi) = \Lambda_{\varphi}(y), y \in (\mathcal{T}_{\varphi})^2$ and as $\varphi_y = _{\sigma^\varphi_{-i}(y)} \varphi$ it follows that
\begin{equation}\label{Eqn=XiEmbedding}
\xi(\varphi_y) = \Lambda_\varphi( \sigma^\varphi_{-i}(y) ), \qquad y \in (\mathcal{T}_{\varphi})^2.
\end{equation}
Fix a normal, semi-finite, faithful weight $\psi$ on $\cN'$ and let $\psiop$ be the opposite weight on $\cN$. Recall that it is defined as $\psiop(x^\ast x) = \psi(y^\ast y)$, for $x = J_\psi y^\ast J_\psi$ with $y \in \cN'$ so that $x \in \cN$.   Recall that if $x \in \mathfrak{n}_{\rho}$ then $x D^{\frac{1}{2}}_{\rho} \in L^2(\cN)$ with $\Vert x D^{\frac{1}{2}}_{\rho} \Vert_2 = \rho(x^\ast x)$. We shall also write $D_\psi$ for $D_{\psiop}$ (as $\psi$ is a weight on $\cN'$ there can be no confusion). Note that we have $\npsi = J_\psi \mathfrak{n}_{\psiop}^\ast J_\psi$ and a GNS-construction for $\psi$ is given by  $J z^\ast J \mapsto D_\psi^{\frac{1}{2}} z$, where $z \in \mathfrak{n}_{\psiop}^\ast$.
 For an element $D \in L^1(\cN)^+$ we recall that ${\rm Tr}( D)  = \rho(1)$ where $\rho$ is such that $D = \frac{d\rho}{d\psi}$. We extend this integral/trace to $L^1(\cN)$ by linearity. Now we have the following.

\begin{lem}\label{Lem=Intersection}
Let $\xi \in L^2(\cN)$ and suppose that the mapping $f: \cN_\ast \rightarrow \mathbb{C}: \omega \mapsto \langle \xi(\omega), \xi \rangle$ is bounded. Then there exists $x \in \mathfrak{n}_{\psi^{{\rm op}}}$ such that $\xi = \Lambda_{\psi^{{\rm op}}}(x)$.
\end{lem}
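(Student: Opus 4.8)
The plan is to work throughout with the weight $\psiop$ on $\cN$, using the $L^2$-model in which we identify $\cH_{\psiop}$ with $L^2(\cN)$ so that $\Lambda_{\psiop}(z)=[zD_\psi^{\frac12}]$, the vectors $\xi(\omega)$ are taken with respect to $\psiop$, and $\cN$ acts on $L^2(\cN)$ on the right. First I would extract a representing operator. The assignment $\omega\mapsto\xi(\omega)$ is \emph{linear} on $\mathcal{I}_{\psiop}$, because $\langle\xi(\omega),\Lambda_{\psiop}(z)\rangle=\omega(z^\ast)$ is linear in $\omega$; hence $f$ is a linear functional. Since $\mathcal{I}_{\psiop}$ contains all ${}_y\psiop$ with $y\in\Tpsiop$ and these are dense in $\cN_\ast$, boundedness of $f$ yields a unique $x\in\cN$ with $f(\omega)=\omega(x^\ast)$. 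Feeding in $\omega={}_y\psiop$ and using $\xi({}_y\psiop)=\Lambda_{\psiop}(y)$ (stated just before \eqref{Eqn=XiEmbedding}) gives, for all $y\in(\Tpsiop)^2$,
\[
\langle\Lambda_{\psiop}(y),\xi\rangle=({}_y\psiop)(x^\ast)=\psiop(x^\ast y),
\]
a relation I will call $(\star)$. The goal is then to promote $(\star)$ into the two assertions $x\in\npsiop$ and $\xi=\Lambda_{\psiop}(x)$.

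Next I would regularize. Take a net $a_i\in\Tpsiop$ as in Lemma \ref{Lem=Approx}, so that $\|a_i\|$ and $\|\sigma^{\psiop}_{i/2}(a_i)\|$ are bounded, $a_i\to1$ and $\sigma^{\psiop}_{i/2}(a_i)\to1$ $\sigma$-strongly. Since $a_i\in\npsiop$ and $\npsiop$ is a left ideal, $xa_i\in\npsiop$. The crucial step is the modular identity
\[
\Lambda_{\psiop}(xa_i)=\xi\,\sigma^{\psiop}_{i/2}(a_i),
\]
the right-hand side being right multiplication in $L^2(\cN)$. To prove it I would pair both sides with $\Lambda_{\psiop}(y)$, $y\in(\Tpsiop)^2$. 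On the right, right multiplication together with the analytic continuation $D_\psi^{\frac12}c\,D_\psi^{-\frac12}=\sigma^{\psiop}_{-i/2}(c)$ turns $\Lambda_{\psiop}(y)\,\sigma^{\psiop}_{i/2}(a_i)^\ast$ into $\Lambda_{\psiop}(y')$ with $y'=y\,\sigma^{\psiop}_{-i}(a_i^\ast)\in(\Tpsiop)^2$; the KMS condition gives the functional identity ${}_{y'}\psiop=({}_y\psiop)\,a_i^\ast$, so that $(\star)$ applied to $y'$ produces $({}_y\psiop)(a_i^\ast x^\ast)=\psiop(a_i^\ast x^\ast y)$. On the left, $\langle\Lambda_{\psiop}(y),\Lambda_{\psiop}(xa_i)\rangle=\psiop((xa_i)^\ast y)=\psiop(a_i^\ast x^\ast y)$, which matches; density of $\Lambda_{\psiop}((\Tpsiop)^2)$ in $L^2(\cN)$ then gives the identity.

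With the identity in hand I would finish as follows. From the uniform bound $C:=\sup_i\|\sigma^{\psiop}_{i/2}(a_i)\|<\infty$ I get $\|\Lambda_{\psiop}(xa_i)\|_2=\|\xi\,\sigma^{\psiop}_{i/2}(a_i)\|_2\le C\|\xi\|_2$. Because $a_i\to1$ $\sigma$-strongly, $\langle a_i^\ast x^\ast xa_i\eta,\zeta\rangle=\langle x^\ast x(a_i\eta),a_i\zeta\rangle\to\langle x^\ast x\eta,\zeta\rangle$, so $a_i^\ast x^\ast xa_i\to x^\ast x$ $\sigma$-weakly; lower semicontinuity (normality) of $\psiop$ then yields
\[
\psiop(x^\ast x)\le\liminf_i\psiop(a_i^\ast x^\ast xa_i)=\liminf_i\|\Lambda_{\psiop}(xa_i)\|_2^2\le C^2\|\xi\|_2^2<\infty,
\]
i.e.\ $x\in\npsiop$. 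Finally, with $x\in\npsiop$ the right-hand side of $(\star)$ is literally the GNS inner product $\langle\Lambda_{\psiop}(y),\Lambda_{\psiop}(x)\rangle$, so $\langle\Lambda_{\psiop}(y),\xi-\Lambda_{\psiop}(x)\rangle=0$ for all $y$ in the dense set $(\Tpsiop)^2$, whence $\xi=\Lambda_{\psiop}(x)$.

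The main obstacle is the middle step: since $x$ is not yet known to lie in $\npsiop$, one cannot manipulate $\Lambda_{\psiop}(x)$ directly, and the whole difficulty is to manufacture a uniform $L^2$-bound on the regularizations $xa_i$ out of the single scalar relation $(\star)$. Keeping the analytic continuation $D_\psi^{\frac12}(\cdot)D_\psi^{-\frac12}=\sigma^{\psiop}_{-i/2}(\cdot)$ and the KMS rearrangement rigorous — as genuine identities of $L^2$-vectors and of normal functionals rather than formal trace computations — is where care is needed; I would phrase them via right multiplication on $L^2(\cN)$ and the functional identity ${}_{y\sigma^{\psiop}_{-i}(a_i^\ast)}\psiop=({}_y\psiop)a_i^\ast$, both of which are legitimate. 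A secondary point worth flagging is that only $\sigma$-strong (not $\sigma$-strong$^\ast$) convergence of the approximating net is actually used, precisely because the passage to $x\in\npsiop$ goes through lower semicontinuity of $\psiop$ rather than through norm convergence of $\Lambda_{\psiop}(xa_i)$.
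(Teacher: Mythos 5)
Your proof is correct and follows essentially the same route as the paper's: after representing the bounded functional by an element $x\in\cN$, both arguments regularize with the net $a_i$ of Lemma \ref{Lem=Approx} and establish the same modular/KMS identity (your vector identity $\Lambda_{\psiop}(xa_i)=\xi\,\sigma^{\psiop}_{i/2}(a_i)$ is exactly the paper's pairing $\langle \xi(\psiop_y), \Lambda_{\psiop}(x a_i) \rangle = \langle J  \sigma^{\psiop}_{i/2}(a_i) J\, \xi(\psiop_y), \xi \rangle$ restated via the adjoint of right multiplication). The only divergence is in the last step and is cosmetic: the paper concludes in one stroke from the $\sigma$-weak/weak closedness of $\Lambda_{\psiop}$ applied to $xa_i\to x$ and $\Lambda_{\psiop}(xa_i)\to\xi$ weakly, whereas you unpack that closedness into lower semicontinuity of $\psiop$ (to get $x\in\npsiop$) followed by a density argument (to identify $\xi=\Lambda_{\psiop}(x)$).
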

\begin{proof}
Because $f$ is bounded there exists $x^\ast \in \cN$ such that $f(\omega) = \omega(x^\ast)$. We claim that $x \in \npsiop$. Let $y \in (\Tpsiop)^2$ and let $a_j \in \Tpsiop$ be a bounded net converging to 1 in the $\sigma$-strong topology and such that also $ \sigma^{\psiop}_{i/2}(a_j)$ is bounded and convergent to 1  in the $\sigma$-strong topology, c.f. Lemma \ref{Lem=Approx}. Then $x a_j \in \npsiop$ and $x a_j \rightarrow x$ in the $\sigma$-strong topology. Also we have,
\[
\begin{split}
& \langle \xi(\psiop_y), \Lambda_{\psiop}(x a_j) \rangle = \psiop( a_j^\ast x^\ast \sigma^{\psiop}_{-i}(y) )
= \langle \xi( \psiop_{y a_j^\ast} ), \xi \rangle  \\
= & \langle \Lambda_{\psiop}(\sigma^{\psiop}_{-i}(y a_j^\ast) ), \xi \rangle
= \langle J  \sigma^{\psiop}_{i/2}(a_j) J \Lambda_{\psiop}(\sigma^{\psiop}_{-i}(y  ) ), \xi \rangle \\
= & \langle J  \sigma^{\psiop}_{i/2}(a_j) J \xi(\psiop_y), \xi \rangle.
\end{split}
\]
So we see that $\Lambda_{\psiop}(x a_j)$ converges weakly to $\xi$ as the vectors $\xi(\psiop_y)$ are dense in $L^2(\cN)$ (c.f. \eqref{Eqn=XiEmbedding} and \cite[Lemma A.2]{CaspersLpf}). As $\Lambda_{\psiop}$ is $\sigma$-weak/weak closed this implies that $x \in \npsiop$ and moreover that $\Lambda_{\psiop}(x) = \xi$. This proves the claim.
\end{proof}

\begin{lem}\label{Lem=PsiBounded}
The set of  $\psi$-bounded  $D(L^2(\cN), \psi)$ equals $\{ x D_{\psi}^{\frac{1}{2}} \mid x \in \mathfrak{n}_{\psi^{op}} \}$. Furthermore we have $R^\psi(x D_{\psi}^{\frac{1}{2}}) = x$.
\end{lem}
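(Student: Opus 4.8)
The plan is to establish the two inclusions separately, working throughout in the concrete GNS picture recalled just above: $\npsi = J\npsiop^\ast J$, the GNS map is $\Lambda_\psi(Jz^\ast J) = D_\psi^{\frac12}z$ for $z \in \npsiop^\ast$, and in the standard form $(\cN, L^2(\cN), J, L^2(\cN)^+)$ with $J\zeta = \zeta^\ast$ an operator $Jz^\ast J \in \cN'$ acts on $L^2(\cN)$ by right multiplication, $Jz^\ast J\,\zeta = \zeta z$ (since $Jz^\ast J\zeta = (z^\ast\zeta^\ast)^\ast = \zeta z$).

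For the inclusion $\supseteq$ I would fix $x \in \npsiop$, so that $xD_\psi^{\frac12} = \Lambda_{\psiop}(x) \in L^2(\cN)$ is a genuine vector, and compute $R^\psi(xD_\psi^{\frac12})$ on the dense domain $\{\Lambda_\psi(w): w \in \npsi\}$ appearing in \eqref{Eqn=ROperator}. For $w = Jz^\ast J$ the defining formula gives $w(xD_\psi^{\frac12})$; since $x \in \cN$ and $Jz^\ast J \in \cN'$ commute, this equals $x\,(Jz^\ast J)D_\psi^{\frac12} = x(D_\psi^{\frac12}z) = x\,\Lambda_\psi(w)$. Thus the preclosed map defining $R^\psi(xD_\psi^{\frac12})$ coincides with left multiplication by the bounded operator $x$; hence $xD_\psi^{\frac12}$ is $\psi$-bounded and $R^\psi(xD_\psi^{\frac12}) = x$, which simultaneously gives the displayed identity.

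For the reverse inclusion, let $\xi$ be $\psi$-bounded and set $x := R^\psi(\xi)$. Since $R^\psi(\xi)\,y = y\,R^\psi(\xi)$ for all $y \in \cN'$ (noted right after \eqref{Eqn=ROperator}) and $R^\psi(\xi)$ is now bounded, it commutes with $\cN'$ and so $x \in \cN$. Evaluating the definition of $R^\psi(\xi)$ against $w = Jz^\ast J$ yields the key identity $\xi z = w\xi = x\,\Lambda_\psi(w) = x D_\psi^{\frac12} z$ for every $z \in \npsiop^\ast$. I would then upgrade this to $\xi = xD_\psi^{\frac12}$ with $x \in \npsiop$: the cleanest route is to verify the hypothesis of Lemma \ref{Lem=Intersection} for $\xi$, which yields $\xi = \Lambda_{\psiop}(x') = x'D_\psi^{\frac12}$ for some $x' \in \npsiop$, after which $x' = x$ follows from the forward direction; alternatively one chooses a net $z_k \in \npsiop^\ast \cap \npsiop$ of positive elements increasing $\sigma$-strongly to $1$, observes $\xi z_k \to \xi$ in $L^2(\cN)$, and passes to the limit using closedness.

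The main obstacle is precisely this last step. The identity $\xi z = xD_\psi^{\frac12}z$ only controls $\xi$ after right multiplication, and a priori $xD_\psi^{\frac12}$ is merely a closed densely defined operator; the content of the lemma is the integrability statement $x \in \npsiop$, equivalently $xD_\psi^{\frac12} \in L^2(\cN)$. Resolving it requires either the closedness of the GNS map $\Lambda_{\psiop}$ (with care, since $xD_\psi^{\frac12}z_k$ is not $\Lambda_{\psiop}(xz_k)$ because $z_k$ acts on the wrong side of $D_\psi^{\frac12}$, so the naive monotonicity argument does not literally apply) or the boundedness criterion packaged in Lemma \ref{Lem=Intersection}; I expect the verification of that criterion from $\psi$-boundedness to be the technically delicate point.
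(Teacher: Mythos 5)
Your forward inclusion is exactly the paper's argument: on the GNS domain $\Lambda_\psi(\pi_r(z)) = D_\psi^{\frac{1}{2}}z$ the map defining $R^\psi(xD_\psi^{\frac{1}{2}})$ sends $D_\psi^{\frac{1}{2}}z \mapsto \pi_r(z)\,xD_\psi^{\frac{1}{2}} = xD_\psi^{\frac{1}{2}}z$, i.e.\ it is left multiplication by the bounded operator $x$, so $xD_\psi^{\frac{1}{2}} \in D(L^2(\cN),\psi)$ and $R^\psi(xD_\psi^{\frac{1}{2}}) = x$. The genuine gap is in the reverse inclusion, and it is exactly the step you flag at the end without resolving: you correctly reduce to verifying the hypothesis of Lemma \ref{Lem=Intersection} for $\xi$, but you never verify it, and the identity $\xi z = xD_\psi^{\frac{1}{2}}z$ is not by itself sufficient. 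The obstruction is a left/right mismatch: Lemma \ref{Lem=Intersection} asks for boundedness of $\omega \mapsto \langle \xi(\omega), \xi\rangle$, and the test vectors $\xi({}_y\psiop) = \Lambda_{\psiop}(y) = yD_\psi^{\frac{1}{2}}$ carry $y$ on the \emph{left} of $D_\psi^{\frac{1}{2}}$, whereas $\psi$-boundedness of $\xi$ only gives you control of $\xi$ under \emph{right} multiplication. Your proposed fallback (a net $z_k \nearrow 1$ and closedness of $\Lambda_{\psiop}$) fails for precisely the reason you yourself note.

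The paper bridges this by an explicit trace computation for $y \in (\Tpsiop)^2$, with $x := R^\psi(\xi)$: starting from $\overline{\langle \xi({}_y\psiop),\xi\rangle} = \overline{{\rm Tr}(\xi^\ast y D_\psi^{\frac{1}{2}})} = {\rm Tr}(D_\psi^{\frac{1}{2}}y^\ast\xi)$, one uses the modular relation to replace $D_\psi^{\frac{1}{2}}y^\ast$ by $\sigma^{\psi}_{-i/2}(y^\ast)D_\psi^{\frac{1}{2}}$, applies traciality of ${\rm Tr}$ to cycle $\sigma^{\psi}_{-i/2}(y^\ast)$ to the right of $\xi$, and only then invokes your identity in the form $\xi\,\sigma^{\psi}_{-i/2}(y^\ast) = R^\psi(\xi)\Lambda_\psi\bigl(\pi_r(\sigma^{\psi}_{-i/2}(y^\ast))\bigr) = xD_\psi^{\frac{1}{2}}\sigma^{\psi}_{-i/2}(y^\ast)$. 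Undoing the modular shift gives ${\rm Tr}(xD_\psi y^\ast) = \overline{{}_y\psiop(x^\ast)}$. Since $x$ is bounded and the functionals ${}_y\psiop$ with $y \in (\Tpsiop)^2$ are dense in $\cN_\ast$, the functional $\omega \mapsto \langle\xi(\omega),\xi\rangle$ is bounded, Lemma \ref{Lem=Intersection} applies, and $\xi = \Lambda_{\psiop}(x) = xD_\psi^{\frac{1}{2}}$. So the architecture of your proof coincides with the paper's, but the analytic core of the reverse inclusion --- the KMS/traciality manipulation converting right-multiplication information about $\xi$ into the left-multiplication pairing demanded by Lemma \ref{Lem=Intersection} --- is missing from your proposal.
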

\begin{proof}
Write $\pi_r(z), z \in \cN$ for the right multipliciation mapping $L^2(\cN) \rightarrow L^2(\cN): x \mapsto x z$. In particular $\pi_r(z) \in \cN'$ and $\pi_r(z) = J z^\ast J$. Recall that $\Lambda_\psi: \pi_r(z) \mapsto D_\psi^{\frac{1}{2}} z$ 
 with $z \in \mathfrak{n}_{\psi^{op}}^\ast$ is a GNS-construction for $\psi$. As  for  $x \in \mathfrak{n}_{\psi^{op}}$ we have that $x D_{\psi}^{\frac{1}{2}} \in L^2(\cN)$
   we have for $z \in \mathfrak{n}_{\psi^{op}}^\ast$,
   \[
   R^\psi( x D_\psi^{\frac{1}{2}} ):  D_\psi^{\frac{1}{2}} z = \Lambda_\psi( \pi_r( z) )  \mapsto   \pi_r( z)  x D_\psi^{\frac{1}{2}} =  x D_\psi^{\frac{1}{2}}z.
    \]
    In particular $x D_{\psi}^{\frac{1}{2}}$ is $\psi$-bounded and $R^\psi(x D_\psi^{\frac{1}{2}} ) =  x$.

   Conversely, let $y \in (\Tpsiop)^2$ and take $\xi \in D(L^2(\cN), \psi)  \subseteq L^2(\cN)$ and set $x = R^\psi(\xi)$. Then we have,
   \[
   \begin{split}
   & \overline{\langle \xi(_y \psiop), \xi \rangle} = \overline{ {\rm Tr}(\xi^\ast y D_\psi^{\frac{1}{2}}) }
    =   {\rm Tr}(  D_\psi^{\frac{1}{2}}  y^\ast \xi )= {\rm Tr}(   \sigma^\psi_{-i/2} (y^\ast)  D_\psi^{\frac{1}{2}}\xi )  \\
    = &  {\rm Tr}(  D_\psi^{\frac{1}{2}}\xi   \sigma^\psi_{-i/2} (y^\ast) )
    =      {\rm Tr}(  D_\psi^{\frac{1}{2}}   R^\psi(\xi) \Lambda_{\psi} (\pi_r(  \sigma^\psi_{-i/2} (y^\ast)))  )\\
    =  &  {\rm Tr}( D_\psi^{\frac{1}{2}} x D_\psi^{\frac{1}{2}}  \sigma^\psi_{-i/2} (y^\ast) )
    = {\rm Tr}( x D_\psi y^\ast )
    =    \overline{  {\rm Tr}( y D_\psi x^\ast)}
    =  \overline{  _y \psiop(x^\ast)  }.
   \end{split}
   \]
   This shows that $_y \psiop \mapsto  \langle \xi(_y \psiop), \xi \rangle$ extends boundedly to a map $\cN_\ast \rightarrow \mathbb{C}$ (using   that functionals $_y \psiop$ with $y \in (\Tpsiop)^2$ are dense in $\cN_\ast$). Then applying Lemma \ref{Lem=Intersection} shows that $\xi = \Lambda_{\psiop}(x)$ for some $x \in \npsiop$ and therefore $\xi = x D_\psi^{\frac{1}{2}}$.
\end{proof}

\begin{thm}\label{Thm=Spacial}
Let $\varphi$ be a normal, semi-finite, faithful weight on $\cN$ and let $\psi$ be a normal, semi-finite, faithful weight on $\cN'$. Let ${\rm Dom}(G) = \{ x D_\psi^{\frac{1}{2}} \mid x \in \nphi^\ast \cap \mathfrak{n}_{\psi^{op}} \}$.
Consider the mapping,
\begin{equation}\label{Eqn=SpacialDerivative}
G: {\rm Dom}(G) \rightarrow L^2(\cN): x  D_{\psi}^{\frac{1}{2}} \mapsto D_{\varphi}^{\frac{1}{2}} x.
\end{equation}
Then $G=\left(\frac{d\varphi}{ d\psi}\right)^{\frac{1}{2}}$. In particular $G$ is closed and positive self-adjoint on the domain defined by \eqref{Eqn=SpacialDerivative}.
\end{thm}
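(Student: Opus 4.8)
The plan is to identify the explicitly given map $G$ with the positive self-adjoint operator $A := \left(\frac{d\varphi}{d\psi}\right)^{\frac12}$ by first matching their domains, then matching the associated quadratic forms (which also shows $G$ is symmetric and positive), and finally pinning down the actual vector values of $A$ on $\Dom(G)$ through the two-sided modular action of the spatial derivative on $L^2(\cN)$.

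First I would record that $\Dom(G)=\Dom(A)$. By Lemma \ref{Lem=PsiBounded} the $\psi$-bounded vectors are exactly the $xD_\psi^{\frac12}$ with $x\in\npsiop$, and $R^\psi(xD_\psi^{\frac12})=x$. Hence $\theta^\psi(xD_\psi^{\frac12},xD_\psi^{\frac12})=R^\psi(\xi)R^\psi(\xi)^\ast=xx^\ast$, which lies in $\mphi^+$ precisely when $x^\ast\in\nphi$, i.e. $x\in\nphi^\ast$. Since $\Dom(A)$ is the form domain of $q_\varphi$, namely $\{\xi\in D(L^2(\cN),\psi)\mid \theta^\psi(\xi,\xi)\in\mphi^+\}$, this set coincides with $\Dom(G)=\{xD_\psi^{\frac12}\mid x\in\nphi^\ast\cap\npsiop\}$.

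Next comes the form computation. For $\xi=xD_\psi^{\frac12},\eta=yD_\psi^{\frac12}\in\Dom(G)$ the definition of $q_\varphi$ gives $q_\varphi(\xi,\eta)=\varphi(xy^\ast)$ (polarizing $\langle\varphi,\theta^\psi(\cdot,\cdot)\rangle$ and using $\theta^\psi(\xi,\eta)=xy^\ast\in\nphi^\ast\nphi\subseteq\mphi$). On the other hand, using the $D_\varphi$-calculus and the identity ${\rm Tr}(D_\varphi^{\frac12}aD_\varphi^{\frac12})=\varphi(a)$ from Remark \ref{Rmk=Notation}, one finds $\langle G\xi,G\eta\rangle=\langle D_\varphi^{\frac12}x,D_\varphi^{\frac12}y\rangle={\rm Tr}(y^\ast D_\varphi x)=\varphi(xy^\ast)$. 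Thus $\langle G\xi,G\eta\rangle=q_\varphi(\xi,\eta)=\langle A\xi,A\eta\rangle$ by the first representation theorem for the closed form $q_\varphi$. The same trace manipulations, via cyclicity, give $\langle G\xi,\eta\rangle={\rm Tr}(D_\psi^{\frac12}y^\ast D_\varphi^{\frac12}x)={\rm Tr}(y^\ast D_\varphi^{\frac12}xD_\psi^{\frac12})=\langle\xi,G\eta\rangle$ and $\langle G\xi,\xi\rangle=\Vert D_\varphi^{\frac14}xD_\psi^{\frac14}\Vert_2^2\geq 0$, so $G$ is symmetric and positive.

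For the crux, once $\Dom(G)=\Dom(A)$ is known it suffices to prove the inclusion $G\subseteq A$; equal domains then force $G=A$ and yield at once closedness and self-adjointness on the domain \eqref{Eqn=SpacialDerivative}. I would obtain the inclusion from the identity $\left(\frac{d\varphi}{d\psi}\right)^{it}\zeta=D_\varphi^{it}\zeta D_\psi^{-it}$ for $\zeta\in L^2(\cN)$: by \eqref{Eqn=SpacialModular} both $\left(\frac{d\varphi}{d\psi}\right)^{it}$ and left multiplication by the unitary $D_\varphi^{it}$ implement $\sigma_t^\varphi$ on $\cN$, so $D_\varphi^{-it}\left(\frac{d\varphi}{d\psi}\right)^{it}\in\cN'$, and the commutant version (the spatial derivative implements $\sigma_{-t}^\psi$ on $\cN'$) identifies this factor as right multiplication by $D_\psi^{-it}$. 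Analytically continuing $t\mapsto D_\varphi^{it}(xD_\psi^{\frac12})D_\psi^{-it}=D_\varphi^{it}xD_\psi^{\frac12-it}$ to $t=-\tfrac{i}{2}$ for $x$ analytic with respect to $\sigma^\varphi$ (and in $\nphi^\ast\cap\npsiop$) then shows $xD_\psi^{\frac12}\in\Dom(A)$ with $A(xD_\psi^{\frac12})=D_\varphi^{\frac12}x=G(xD_\psi^{\frac12})$; the analyticity assumption is finally removed by approximating a general $x$ through the net of Lemma \ref{Lem=Approx} and using closedness of $A$. The main obstacle I expect is exactly this crux: making the two-sided action $\left(\frac{d\varphi}{d\psi}\right)^{it}\zeta=D_\varphi^{it}\zeta D_\psi^{-it}$ precise and controlling the analytic continuation in the exponent $\tfrac12-it$ on $D_\psi$ together with the graph-norm approximation of non-analytic $x$, while keeping the limiting vector inside $L^2(\cN)$. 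Everything preceding it (domain equality, form matching, symmetry and positivity) is routine given Lemma \ref{Lem=PsiBounded} and the $D_\varphi$-calculus of Remark \ref{Rmk=Notation}.
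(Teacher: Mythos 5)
Your first two paragraphs coincide with the paper's proof. The paper identifies $\Dom(G)$ with $\Dom\bigl(\tfrac{d\varphi}{d\psi}\bigr)^{\frac12}$ exactly as you do, via Lemma \ref{Lem=PsiBounded} and $\theta^\psi(xD_\psi^{\frac12},xD_\psi^{\frac12})=xx^\ast\in\mphi^+\Leftrightarrow x\in\nphi^\ast$, and then verifies $\Vert G\xi\Vert_2^2=\varphi(xx^\ast)=q_\varphi(\xi)$ (the paper does this with the unpolarized identity $\varphi(xx^\ast)=\Vert x^\ast D_\varphi^{\frac12}\Vert_2^2=\Vert J(x^\ast D_\varphi^{\frac12})\Vert_2^2=\Vert D_\varphi^{\frac12}x\Vert_2^2$; you do it with traces — same content). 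The divergence is in the last step. The paper infers $G\subseteq\bigl(\tfrac{d\varphi}{d\psi}\bigr)^{\frac12}$ directly from the form equality; to be airtight this uses the uniqueness half of the correspondence between closed positive forms and positive self-adjoint operators: a positive \emph{symmetric} operator whose domain is the full form domain of the closed form $q_\varphi$ and which reproduces $q_\varphi$ must equal $\bigl(\tfrac{d\varphi}{d\psi}\bigr)^{\frac12}$ (norm equality alone only yields $\vert G\vert=\bigl(\tfrac{d\varphi}{d\psi}\bigr)^{\frac12}$, leaving a partial isometry undetermined). Your second paragraph supplies exactly the missing symmetry and positivity of $G$, so at that point you are already done; your third paragraph is machinery you do not need.

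Regarding that third paragraph: the route through $\bigl(\tfrac{d\varphi}{d\psi}\bigr)^{it}\zeta=D_\varphi^{it}\zeta D_\psi^{-it}$ can be made to work, but as sketched it is the weakest part of the proposal. Matching the automorphism groups implemented on $\cN$ and on $\cN'$ only determines the unitary $D_\varphi^{-it}\bigl(\tfrac{d\varphi}{d\psi}\bigr)^{it}\in\cN'$ up to a unitary in the centre, so identifying it with right multiplication by $D_\psi^{-it}$ needs an additional input (e.g. \cite[Theorem IX.3.8]{TakII} applied with the reference weight $\kappa'$). Note also that in this paper the identity $\bigl(\tfrac{D\varphi}{D\psiop}\bigr)_t=D_\varphi^{it}D_\psi^{-it}$ is the \emph{corollary} deduced from Theorem \ref{Thm=Spacial}, so you would have to establish it independently to avoid circularity; and the analytic continuation in the exponent of $D_\psi$ together with the graph-norm approximation of non-analytic $x$ is considerably more delicate than the form-theoretic finish you already have in hand.
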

\begin{proof}
The domain of $\left(\frac{d\varphi}{ d\psi}\right)^{\frac{1}{2}}$ equals all $\psi$-bounded vectors for which $\langle \varphi, \theta^\psi( x D_\psi^{\frac{1}{2}}, x D_\psi^{\frac{1}{2}}) \rangle$ is finite. So by Lemma \ref{Lem=PsiBounded} these are exactly all vectors of the form $x D_{\psi}^{\frac{1}{2}}$ with $x \in \mathfrak{n}_{\psi^{op}}$ for which $\varphi(x x^\ast)$ is finite, i.e. $x \in \nphi^\ast$. From Lemma \ref{Lem=PsiBounded} we see that for $x,y \in   \mathfrak{n}_{\psi^{op}}$ we have $\theta^\psi( y D_\psi^{\frac{1}{2}}, x D_\psi^{\frac{1}{2}}):   D_\psi^{\frac{1}{2}} z \mapsto  y x^\ast  D_\psi^{\frac{1}{2}} z$. Consequently for $x \in \nphi^\ast \cap \mathfrak{n}_{\psi^{op}}$ we have,
\[
\begin{split}
& \langle  \left( \frac{d \varphi}{d \psi} \right)^{\frac{1}{2}}  ( x D_{\psi}^{\frac{1}{2}}), \left( \frac{d \varphi}{d \psi} \right)^{\frac{1}{2}} (x D_{\psi}^{\frac{1}{2}}) \rangle  \\
 = & \langle \varphi, \theta^\psi( x D_\psi^{\frac{1}{2}}, x D_\psi^{\frac{1}{2}}) \rangle =  \varphi(xx^\ast)
=  \langle  x^\ast D_{\varphi}^{\frac{1}{2}} , x^\ast D_{\varphi}^{\frac{1}{2}} \rangle \\
= & \langle J( x^\ast D_{\varphi}^{\frac{1}{2}}) , J(x^\ast D_{\varphi}^{\frac{1}{2}}) \rangle
=   \langle   D_{\varphi}^{\frac{1}{2}} x,   D_{\varphi}^{\frac{1}{2}} x \rangle
=  \langle  G( x D_{\psi}^{\frac{1}{2}}), G(x D_{\psi}^{\frac{1}{2}}) \rangle.
\end{split}
\]
This implies that $G \subseteq  \left(\frac{d\varphi}{ d\psi}\right)^{\frac{1}{2}}$.
\end{proof}

\begin{cor}
Using the notation of Theorem \ref{Thm=Spacial}, for the Connes cocycle derivative we have:
\[
\left( \frac{D \varphi}{ D \psiop} \right)_t   = D_{\varphi}^{it }  D_{\psi}^{- it }.
\]
\end{cor}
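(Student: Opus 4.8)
The plan is to obtain the corollary from the cocycle chain rule for spatial derivatives, in the same spirit as the proof of Theorem \ref{Thm=CocycleDerivative}. Recall that for normal, semi-finite, faithful weights $\varphi, \psiop$ on $\cN$ and any normal, semi-finite, faithful weight $\chi$ on $\cN'$, Connes' chain rule (see \cite[Theorem IX.3.8]{TakII} and \cite{Connes}) gives
\[
\left( \frac{D \varphi}{ D \psiop} \right)_t = \left( \frac{d\varphi}{d\chi} \right)^{it} \left( \frac{d\chi}{d\psiop} \right)^{it}.
\]
First I would specialise $\chi = \kappa'$, the weight on $\cN'$ fixed throughout the appendix, so that by definition $\left(\frac{d\varphi}{d\kappa'}\right)^{it} = D_\varphi^{it}$. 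Next I would invoke the inversion rule for spatial derivatives, $\frac{d\kappa'}{d\psiop} = \left(\frac{d\psiop}{d\kappa'}\right)^{-1} = D_\psi^{-1}$, which yields $\left(\frac{d\kappa'}{d\psiop}\right)^{it} = D_\psi^{-it}$. Combining the two identities gives $\left( \frac{D\varphi}{D\psiop} \right)_t = D_\varphi^{it} D_\psi^{-it}$, as claimed. This is the short path and I would present it as the proof.

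Should one prefer an argument built directly on Theorem \ref{Thm=Spacial} rather than citing the chain rule, I would proceed as follows. Using the description $\left(\frac{d\varphi}{d\psi}\right)^{1/2}\colon x D_\psi^{\frac{1}{2}} \mapsto D_\varphi^{\frac{1}{2}} x$ from Theorem \ref{Thm=Spacial}, together with the analogous identity for $\psiop$ in place of $\varphi$ (giving $\left(\frac{d\psiop}{d\psi}\right)^{1/2}\colon x D_\psi^{\frac{1}{2}}\mapsto D_\psi^{\frac{1}{2}} x$, since $D_{\psiop} = D_\psi$), I would first identify the two unitary groups on $L^2(\cN)$ as $\left(\frac{d\varphi}{d\psi}\right)^{it}\colon \xi \mapsto D_\varphi^{it}\xi D_\psi^{-it}$ and $\left(\frac{d\psiop}{d\psi}\right)^{it}\colon \xi \mapsto D_\psi^{it}\xi D_\psi^{-it}$. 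Their product then acts as left multiplication by $u_t := D_\varphi^{it} D_\psi^{-it}$, while the chain rule reads $\left(\frac{D\varphi}{D\psiop}\right)_t = \left(\frac{d\varphi}{d\psi}\right)^{it}\left(\frac{d\psiop}{d\psi}\right)^{-it}$. Since \eqref{Eqn=SpacialModular} shows $\Ad(D_\varphi^{it}) = \sigma_t^\varphi$ and $\Ad(D_\psi^{it}) = \sigma_t^{\psiop}$ on $\cN$, and both derivatives use the same commutant weight $\kappa'$, the operators $D_\varphi^{it}$ and $D_\psi^{it}$ implement the \emph{same} automorphism of $\cN'$; hence $u_t$ commutes with $\cN'$ and lies in $\cN$, and a direct computation gives $u_{s+t} = u_s\,\sigma_s^{\psiop}(u_t)$ and $\sigma_t^\varphi = \Ad(u_t)\circ\sigma_t^{\psiop}$, so that by uniqueness of the cocycle $u_t = \left(\frac{D\varphi}{D\psiop}\right)_t$.

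The main obstacle in this second route is the passage from the square-root statement of Theorem \ref{Thm=Spacial} to the identity for general imaginary powers $\left(\frac{d\varphi}{d\psi}\right)^{it}\colon \xi\mapsto D_\varphi^{it}\xi D_\psi^{-it}$: this needs a careful analytic-continuation and core argument, tracking the domains on which $\xi\mapsto D_\varphi^{z}\xi D_\psi^{-z}$ is defined, together with the commutant version of \eqref{Eqn=SpacialModular} asserting that $D_\varphi^{it}(\cdot)D_\varphi^{-it}$ and $D_\psi^{it}(\cdot)D_\psi^{-it}$ agree on $\cN'$, which is precisely what forces $u_t\in\cN$. Since the first route bypasses all of this, I would give the chain-rule proof and record the spatial reinterpretation only as a remark.
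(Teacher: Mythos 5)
Your first route is exactly the paper's proof: the paper applies Connes' chain rule \cite[Theorem IX.3.8]{TakII} with the reference weight $\rho=\kappa'$ on $\cN'$ and reads off $D_\varphi^{it}D_\psi^{-it}$ from the definitions $D_\varphi = d\varphi/d\kappa'$ and $D_\psi = d\psiop/d\kappa'$, your appeal to the inversion rule merely rewriting the factor $\left(\frac{d\psiop}{d\kappa'}\right)^{-it}$ as $\left(\frac{d\kappa'}{d\psiop}\right)^{it}$. The second, spatial route you sketch is not needed and is not what the paper does, so presenting only the chain-rule argument is the right call.
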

\begin{proof}
For any normal semi-finite faithful weight $\rho$ on $\cN'$ we have (see \cite[Theorem IX.3.8]{TakII})
\[
\left( \frac{D \psiop}{ D \varphi}  \right)_{t} =\left( \frac{d \varphi}{ d\rho}  \right)^{it} \left( \frac{d \psiop}{ d\rho}  \right)^{-it},
\]
so that the proposition follows from Theorem \ref{Thm=Spacial}.
\end{proof}

Finally we need an approximation argument for one of our proofs.

\begin{lem}\label{Lem=CoreOfSpatialDerivative}
Let $\varphi$ be a normal, semi-finite, faithful weight on $\cN$ and let $\psi$ be a normal, semi-finite, faithful weight on $\cN'$. The set $x D_\psi^{\frac{1}{2}}$ with $x \in {\rm span} \: \nphi^\ast \mathfrak{n}_{\psi^{op}}$ is a core for $\left( \frac{d \varphi}{d \psi} \right)^{\frac{1}{2}}$.
\end{lem}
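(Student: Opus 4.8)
The plan is to combine the explicit description of $\left(\frac{d\varphi}{d\psi}\right)^{\frac{1}{2}}$ from Theorem \ref{Thm=Spacial} with a one-sided approximation by left translates coming from a Tomita-algebra approximate identity for $\varphi$. First I would record the graph norm. By Theorem \ref{Thm=Spacial} the operator $G := \left(\frac{d\varphi}{d\psi}\right)^{\frac{1}{2}}$ is self-adjoint with domain $\{ x D_\psi^{\frac{1}{2}} \mid x \in \nphi^\ast \cap \npsiop \}$ and $G(x D_\psi^{\frac{1}{2}}) = D_\varphi^{\frac{1}{2}} x = J(x^\ast D_\varphi^{\frac{1}{2}})$. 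Since $\Vert x D_\psi^{\frac{1}{2}} \Vert_2^2 = \psiop(x^\ast x)$ and, using Lemma \ref{Lem=PsiBounded} and the isometry $J$, $\Vert D_\varphi^{\frac{1}{2}} x \Vert_2^2 = \Vert x^\ast D_\varphi^{\frac{1}{2}} \Vert_2^2 = \varphi(x x^\ast)$, the graph norm of $x D_\psi^{\frac{1}{2}}$ equals $\big( \psiop(x^\ast x) + \varphi(x x^\ast) \big)^{\frac{1}{2}}$. As $G$ is closed it suffices to prove that the displayed span is dense in $\mathrm{Dom}(G)$ for this norm. I would first observe that $\mathrm{span}\,\nphi^\ast \npsiop \subseteq \nphi^\ast \cap \npsiop$, because $\nphi^\ast$ is a right ideal and $\npsiop$ a left ideal, so the span indeed sits inside $\mathrm{Dom}(G)$.

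Next I would fix $x \in \nphi^\ast \cap \npsiop$ (the vectors $x D_\psi^{\frac{1}{2}}$ exhaust $\mathrm{Dom}(G)$) and take a net $a_j \in \mathcal{T}_\varphi$ as in Lemma \ref{Lem=Approx}. The key point is that the net furnished by the Gaussian smoothing in \cite[Lemma 9]{TerpII} may be chosen so that $a_j \to 1$ and $\sigma^\varphi_{i/2}(a_j) \to 1$ in the $\sigma$-strong-$\ast$ topology, with $\Vert \sigma^\varphi_z(a_j) \Vert$ bounded on strips. Setting $x_j = a_j^\ast x$, one has $a_j^\ast \in \nphi^\ast$ and $x \in \npsiop$, so $x_j \in \nphi^\ast \npsiop$ and $x_j D_\psi^{\frac{1}{2}}$ lies in the span.

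It then remains to check graph convergence $x_j D_\psi^{\frac{1}{2}} \to x D_\psi^{\frac{1}{2}}$, which splits into the two components. For the base term, $\Lambda_{\psiop}(x_j) = a_j^\ast \Lambda_{\psiop}(x)$, so $\Vert x_j - x \Vert_{2, \psiop} = \Vert (a_j^\ast - 1) \Lambda_{\psiop}(x) \Vert_2 \to 0$ since $a_j^\ast \to 1$ strongly. For the $G$-term I would invoke the right-multiplication identity from the proof of Lemma \ref{Lem=StandardTomTak}, namely $\Lambda_\varphi(x^\ast a_j) = J \sigma^\varphi_{-i/2}(a_j^\ast) J\, \Lambda_\varphi(x^\ast)$, which yields $\Vert x_j^\ast - x^\ast \Vert_{2, \varphi} = \Vert (\sigma^\varphi_{-i/2}(a_j^\ast) - 1)\, J \Lambda_\varphi(x^\ast) \Vert_2 \to 0$, because $\sigma^\varphi_{-i/2}(a_j^\ast) = \sigma^\varphi_{i/2}(a_j)^\ast \to 1$ strongly. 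Together these give graph convergence, so the span is dense in $\mathrm{Dom}(G)$ and hence a core.

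The main obstacle is precisely this second convergence. It requires the approximate identity not merely to converge $\sigma$-strongly together with $\sigma^\varphi_{i/2}(a_j)$ as literally stated in Lemma \ref{Lem=Approx}, but in the $\sigma$-strong-$\ast$ topology, so that the adjoint $\sigma^\varphi_{i/2}(a_j)^\ast = \sigma^\varphi_{-i/2}(a_j^\ast)$ also tends to $1$ strongly; I expect the cleanest route is to record once that the standard Tomita-algebra net may be taken $\sigma$-strong-$\ast$ convergent in all its analytic continuations. Everything else reduces to the ideal properties of $\nphi$ and $\npsiop$ and the two GNS identities above, and so should be routine.
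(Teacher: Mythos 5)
Your proof is correct and follows essentially the same route as the paper's: approximate $x\in\nphi^\ast\cap\npsiop$ by products with a Tomita-algebra approximate identity and verify graph-norm convergence using the explicit description of $\left(\frac{d\varphi}{d\psi}\right)^{\frac{1}{2}}$ from Theorem \ref{Thm=Spacial} together with closedness. The one avoidable complication is your choice $x_j=a_j^\ast x$, which forces the strong-$\ast$ upgrade of Lemma \ref{Lem=Approx} that you flag; the paper instead uses $e_j x$ with $e_j$ chosen so that both $e_j\to 1$ and $\sigma^{\varphi}_{-i/2}(e_j)\to 1$ strongly (e.g.\ $e_j:=\sigma^{\varphi}_{i/2}(a_j)$, whence $\sigma^{\varphi}_{-i/2}(e_j)=a_j$), so no adjoints are taken and the lemma suffices as stated.
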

\begin{proof}
Let $\{ e_j \}_j$ be a net in $\nphi^\ast$ converging to $1$ strongly and such that also $\sigma^{\varphi}_{-i/2}(e_j)$ converges to $1$ strongly. Let $x \in \nphi^\ast \cap \mathfrak{n}_{\psiop}$. Then $e_j x \in \nphi^\ast \mathfrak{n}_{\psiop}$.  Then we have that,
\[
e_j x D_{\psi}^{\frac{1}{2}} \rightarrow x D_{\psi}^{\frac{1}{2}},
\]
in norm and
\[
D_{\varphi}^{\frac{1}{2}} e_j x = \sigma_{-i/2}^{\varphi}(e_j) D_{\varphi}^{\frac{1}{2}} x \rightarrow D_{\varphi}^{\frac{1}{2}} x,
\]
in norm. By Theorem \ref{Thm=Spacial} we see that therefore the linear span of $\nphi^\ast \npsi$ is a core for $\left( \frac{d \varphi}{d \psi} \right)^{\frac{1}{2}}$.
\end{proof}

\end{document}